\newcounter{theorem}
\newtheorem{thm}[theorem]{Theorem}
\newtheorem{lemma}[theorem]{Lemma}
\newtheorem{prop}[theorem]{Proposition}
\newtheorem{cor}[theorem]{Corollary}
\newtheorem{defn}[theorem]{Definition}
\theoremstyle{remark}
\newtheorem*{remark*}{Remark}
\newtheorem{remark}[theorem]{Remark}
\numberwithin{equation}{section}
\numberwithin{theorem}{section}
\newcommand{\e}{\epsilon}
\newcommand{\dl}{\delta}
\newcommand{\R}{\mathbb{R}}
\newcommand{\C}{\mathbb{C}}
\newcommand{\N}{\mathbb{N}}
\renewcommand{\setminus}{\backslash}
\newcommand{\K}{\mathcal{K}}
\newcommand{\tens}{\otimes}
\newcommand{\dsum}{\oplus}
\newcommand{\bigdsum}{\bigoplus}
\newcommand{\chii}{\raisebox{2pt}{$\chi$}}
\renewcommand{\emptyset}{\varnothing}
\newcommand{\iso}{\cong}
\newcommand{\Cu}{\mathcal{C}u}
\newcommand{\jsZ}{\mathcal{Z}}
\newcommand{\supp}{\mathrm{supp}\,}
\newcommand{\Prim}{\mathrm{Prim}}
\newcommand{\Ped}{\mathrm{Ped}\,}
\newcommand{\Span}{\mathrm{span}\,}
\newcommand{\id}{\mathrm{id}}
\newcommand{\dn}{\mathrm{dim}_{nuc}}
\newcommand{\her}[1]{\mathrm{her}\left(#1\right)}
\newcommand{\labelledthing}[2]{\hspace{4pt}\buildrel {#2} \over #1 \hspace{3pt}} 
\newcommand{\labelledrightarrow}{\labelledthing{\longrightarrow}}
\newcommand{\ccite}[2]{\cite[#1]{#2}}
\newcommand{\alabel}{\label}
\newcommand{\TODO}[1]{(\textbf{To do: \textit{#1}})}
\begin{document}

\title[Stably projectionless $C^*$-algebras]{Nuclear dimension, $\jsZ$-stability, and algebraic simplicity for stably projectionless $C^*$-algebras}
\author{Aaron Tikuisis}
\address{\hskip-\parindent
Aaron Tikuisis, Mathematisches Institut der WWU M\"unster, Einsteinstra\ss{}e 62, 48149, M\"unster, Germany.}
\email{a.tikuisis@uni-muenster.de}

\keywords{Nuclear $C^*$-algebras; noncommutative geometry; stably projectionless $C^*$-algebras; nuclear dimension; Jiang-Su algebra; classification; approximately subhomogeneous $C^*$-algebras; slow dimension growth}

\subjclass[2010]{46L35, 46L80, 46L05, 47L40, 46L85}
\thanks{This research was supported by DFG (SFB 878).}

\begin{abstract}
The main result here is that a simple separable $C^*$-algebra is $\jsZ$-stable (where $\jsZ$ denotes the Jiang-Su algebra) if (i) it has finite nuclear dimension or (ii) it is approximately subhomogeneous with slow dimension growth.
This generalizes the main results of \cite{Toms:rigidity,Winter:pure} to the nonunital setting.
As a consequence, finite nuclear dimension implies $\jsZ$-stability even in the case of a separable $C^*$-algebra with finitely many ideals.
Algebraic simplicity is established as a fruitful weakening of being simple and unital, and the proof of the main result makes heavy use of this concept. 
\end{abstract}

\maketitle

\section{Introduction}

The program to classify unital, simple, separable, nuclear $C^*$-algebras has recently seen a small paradigm shift \cite{ElliottToms}.
In light of the fact that there exist such $C^*$-algebras that cannot be classified by ordered $K$-theory and traces \cite{Toms:annals}, the current trend is to try to identify, using regularity properties, the $C^*$-algebras which are (or should be) classifiable.
This idea is crystallized in a conjecture (cf.\ \ccite{Section 3.5}{TomsWinter:V1}), due to Andrew Toms and Wilhelm Winter, that among these $C^*$-algebras, the following properties are equivalent:
\begin{enumerate}
\item \alabel{Intro-dn} finite nuclear dimension (defined in \cite{WinterZacharias:NucDim});
\item \alabel{Intro-Z} $\jsZ$-stability (being isomorphic to one's tensor product with the Jiang-Su algebra $\jsZ$, introduced in \cite{JiangSu}); and
\item \alabel{Intro-unperf} almost unperforated Cuntz semigroup.
\end{enumerate}
One might also add that, modulo the UCT, conditions \ref{Intro-dn}-\ref{Intro-unperf} should be equivalent to being classifiable (though in this form, such a statement is not well-formed because classifiability is a property of a class of $C^*$-algebras, and not a property of a single $C^*$-algebra).
%
%

At the same time, certain examples of simple, stably projectionless $C^*$-algebras have emerged --- including certain crossed products of $\mathcal{O}_2$ by $\mathbb{R}$ \cite{KishimotoKumjian:StablyProj}, a nuclear, separable, non-$\jsZ$-stable example \ccite{Theorem 4.1}{ProjlessReg}, and others \cite{Elliott:ashrange,Jacelon:R,Tsang:range}.
Certain tools, old and new, already allow one to understand some of the structure of these algebras under special hypotheses \cite{Connes:ThomR,Robert:NCCW}.
However, most of the theory on regularity properties for $C^*$-algebras was developed only in the unital case, and leads one to ask what obstructions (if any) exist with nonunital algebras.
Certainly, the unital case carries with it a number of simplifications: for instance, the simplex of traces on a $C^*$-algebra which take the value $1$ at the unit (i.e.\ which are states) plays an indispensible role in the theory, and it is far from obvious what the correct replacement is in the nonunital case.
Nonetheless, one hopes that the simplifications in the unital case are superficial, and that ultimately, one can find and prove analogues or generalizations of the known unital results.

The main results of this article are substantiations of this hope.
They are a generalization to the nonunital case of the breakthrough results of \cite{Winter:pure}, namely that \ref{Intro-dn} implies \ref{Intro-Z}, and that \ref{Intro-unperf} together with almost divisible Cuntz semigroup and locally finite nuclear dimension imply \ref{Intro-Z}.
It follows that \ref{Intro-dn} continues to imply \ref{Intro-Z} in the case of $C^*$-algebras with finitely many ideals.

Algebraic simplicity plays a major role in this article.
Simple unital $C^*$-algebras are automatically algebraically simple (this is a consequence of the fact that elements close enough to the unit are invertible).
Algebraically simple algebras therefore extend the class of unital simple algebras, and these algebras seem to retain features that allow certain regularity properties from the unital theory (especially those involving traces) to be phrased in a fruitful way.
We show that every simple $C^*$-algebra is Morita equivalent to an algebraically simple one --- and therefore, algebraic simplicity is not itself so much a regularity property, but rather a tool for helping to analyze any simple $C^*$-algebra.
(Note that the regularity properties appearing in Toms and Winter's conjecture are all preserved under Morita equivalence.)

The result that simple, nuclear $C^*$-algebras whose Cuntz semigroups are almost unperforated and almost divisible are $\jsZ$-stable has an important consequence for approximately subhomogeneous $C^*$-algebras (using a main result of \cite{TT:ranks}).
Namely, it allows us to characterize slow dimension growth for these algebras as equivalent to $\jsZ$-stability.
In the unital case, this result is obtained as a combination of results by Toms \cite{Toms:rigidity} and Winter \cite{Winter:pure}.
Many of the motivating examples of stably projectionless $C^*$-algebras are known to be approximately subhomogeneous \cite{Elliott:ashrange,Jacelon:R,ProjlessReg,Tsang:range}.
In fact, the class of approximately subhomogeneous, stably projectionless algebras is known to even include some crossed products of $\mathcal{O}_2$ by $\R$ \cite{Dean:CtsFld}.

It should be noted that Norio Nawata has, in \ccite{Theorem 5.11}{Nawata:Projless}, generalized to the nonunital setting a recent $\jsZ$-stability theorem of Matui and Sato \ccite{Theorem 1.1}{MatuiSato:Comp}.
Although they were found independently, many of the Nawata's innovations are similar to those found here.

The organization of this article is as follows.
Traces are absolutely indispensible to the results of this article; these are introduced in Section \ref{AlgSimpleSec} in connection with algebraically simple $C^*$-algebras, for whom analysis involving traces is most tractable.
Properties closely related to traces also appear in Section \ref{AlgSimpleSec}.
We recall the definition of nuclear dimension in Section \ref{DimNucSec}, together with the supporting concept of order zero maps (which will reappear in a characterization of $\jsZ$-stability).
Section \ref{AsympSeqSec} concerns another extremely useful tool, the asymptotic sequence algebra.
This algebra is heavily used throughout the article, largely to compress approximate notions (e.g.\ approximate commutation) and make proofs more conceptual.
We recall what the Jiang-Su algebra is, and provide a characterization of when (nonunital) algebras are $\jsZ$-stable in Section \ref{ZSec} (most of this section actually applies to strongly self-absorbing $C^*$-algebras in place of $\jsZ$).

The remaining sections contain the steps of the proofs of the main results.
As in \cite{Winter:pure}, both main results stated above are reduced to showing that algebras with locally finite nuclear dimension, $m$-almost divisible Cuntz semigroup and $m$-comparison (see Definition \ref{CompDivDefn}) are $\jsZ$-stable.
Particularly, Section \ref{DimNucDivSec} shows that finite nuclear dimension implies $m$-almost divisible Cuntz semigroup for some $m$, while \cite{Robert:dimNucComp} shows that it implies $m$-comparison.
It is proven in Section \ref{DivCentralSec} that if a $C^*$-algebra has locally finite nuclear dimension, then $m$-almost-divisibility implies $0$-almost-divisibility, even in an approximately central way, a conclusion that can be summarized by the existence of embeddings of tracially large, central matrix cones into the asymptotic sequence algebra.
Section \ref{MainProofSect} uses the conclusion of Section \ref{DivCentralSec} and $m$-almost comparison (and, again, locally finite nuclear dimension) to prove $\jsZ$-stability, using a characterization in terms of central dimension drop embeddings into the asymptotic sequence algebra modulo the annihilator.
Finally, it is shown in Section \ref{ASHSec} how the main result here combines with a main result of \cite{TT:ranks} to characterize slow dimension growth in approximately subhomogeneous $C^*$-algebras.

\subsection*{Acknowledgments}
The author would like to thank Joachim Cuntz, George Elliott, Dominic Enders, Ilijas Farah, Bhishan Jacelon, Leonel Robert, Luis Santiago, J\'{a}n \v{S}pakula, Hannes Thiel, Andrew Toms, Stuart White, and Wilhelm Winter for discussions promoting the research in this article.

\subsection{Notation}
The following function will appear many times in the sequel.
For $0 \leq \nu < \eta$, we define $g_{\nu,\eta} \in C_0((0,\infty])$ to be the function that is $0$ on $[0,\nu]$, $1$ on $[\eta,\infty]$ and linear on $[\nu,\eta]$.

\section{Algebraic simplicity and traces}
\alabel{AlgSimpleSec}

We begin by establishing a class of not-necessarily unital $C^*$-algebras which are amenable to many of the techniques used to study unital $C^*$-algebras.
The simple $C^*$-algebras in this class are algebraically simple -- which at first seems to be a very restrictive condition, and in particular, excludes $A \tens \K$ whenever $A$ is stably finite \ccite{Theorem 2.4}{Cuntz:On}.
However, we show in Corollary \ref{ExistsAlgSimple} that if $A$ is simple and $\sigma$-unital, then $A$ is stably isomorphic to an algebraically simple $C^*$-algebra (in this class).
We will make great use of Pedersen's minimal dense ideal, first studied in \cite{Pedersen:MeasureI}, which we will call the Pedersen ideal and denote (for a $C^*$-algebra $A$) by $\Ped(A)$.
A good reference on the Pedersen ideal is \ccite{Section 5.6}{Pedersen:CstarBook}.

\begin{thm}
\alabel{PedACharacterization}
Let $A$ be a $C^*$-algebra.
The following are equivalent:
\begin{enumerate}
\item \alabel{PedACharacterization-Ped} $A = \Ped(A)$ and $A$ is $\sigma$-unital;
\item \alabel{PedACharacterization-StrPos1} There exists a strictly positive element in $\Ped(A)$;
\item \alabel{PedACharacterization-StrPos2} There exists a strictly positive element for $A$ in $\Ped(A \tens \K) \cap A$.
\end{enumerate}
\end{thm}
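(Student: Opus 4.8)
The plan is to reduce everything to strictly positive elements and then concentrate on the single substantial implication. Recall that $A$ is $\sigma$-unital exactly when it admits a strictly positive element, so I would phrase all three conditions in these terms. With this, $(\ref{PedACharacterization-Ped}) \Rightarrow (\ref{PedACharacterization-StrPos1})$ is immediate: pick any strictly positive $a$ (available by $\sigma$-unitality), and the hypothesis $A = \Ped(A)$ puts $a$ into $\Ped(A)$.

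For $(\ref{PedACharacterization-StrPos1}) \iff (\ref{PedACharacterization-StrPos2})$ I would exploit the behaviour of the Pedersen ideal under passage to a hereditary corner. The copy $A \iso A \tens e_{11}$ is the corner of $A \tens \K$ cut down by the projection $1_{M(A)} \tens e_{11} \in M(A \tens \K)$, hence a hereditary subalgebra, and for any hereditary subalgebra $B \subseteq C$ one has $\Ped(B) = B \cap \Ped(C)$. Applied here this yields $\Ped(A) = A \cap \Ped(A \tens \K)$ under $A \iso A \tens e_{11}$. Since an element is strictly positive for $A$ precisely when the corresponding element of $A \tens e_{11}$ is strictly positive there, condition $(\ref{PedACharacterization-StrPos2})$ asserts exactly that some strictly positive element lies in $A \cap \Ped(A\tens\K) = \Ped(A)$, which is condition $(\ref{PedACharacterization-StrPos1})$.

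The heart of the matter is $(\ref{PedACharacterization-StrPos1}) \Rightarrow (\ref{PedACharacterization-Ped})$: from a strictly positive $a \in \Ped(A)$ I must deduce that every $x \in A_+$ lies in $\Ped(A)$. I would first note that the two-sided ideal $I_a$ algebraically generated by $a$ is dense (as $a$ is full) and contained in $\Ped(A)$ (as $a \in \Ped(A)$ and $\Ped(A)$ is an ideal); minimality of the Pedersen ideal then forces $I_a = \Ped(A)$. The goal thus becomes the algebraic statement that $a$ generates $A$ as an ideal, i.e. that every $x$ is a finite sum $\sum_j u_j a v_j$. Strict positivity gives $x \in \her{a} = \overline{aAa}$, hence arbitrarily good norm-approximations of $x$ by elements of $I_a$, and the plan is to upgrade these to an exact factorisation through $a$ by means of the Cohen--Hewitt factorisation theorem, applied to $A$ as an essential Banach $A$-bimodule with its bounded approximate unit.

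The step I expect to be the main obstacle is exactly this upgrade. Norm-approximation alone only delivers $(x - \varepsilon)_+ \in \Ped(A)$, which is automatic for every $x$ via the local unit $g_{\nu,\eta}(x)$ of the cut-down, and so it records nothing about the behaviour of $x$ near $0$ in its spectrum; it is precisely this near-zero ``tail'' that separates $\Ped(A)$ from its closure $A$. The difficulty is sharpened by the fact that one may not simply manufacture a local unit for $a$ inside $\Ped(A)$: a local unit for a strictly positive element would force $A$ to be unital, whereas the intended applications are to nonunital (stably projectionless) algebras. The factorisation must therefore be arranged without a unit, using that membership $a \in \Ped(A)$ presents $a$ as dominated by a finite sum of cut-downs $(b_i - \varepsilon_i)_+$, each carrying its own local unit, and using strict positivity to organise these into an exhaustion adapted to the given $x$. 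Making this unit-free factorisation capture the full tail of $x$ is where the real work lies.
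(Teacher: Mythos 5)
Your reduction of everything to strictly positive elements is reasonable, and (i) $\Rightarrow$ (ii) is fine, but both remaining implications contain genuine gaps. The serious one is the lemma you invoke for (ii) $\Leftrightarrow$ (iii): it is \emph{not} true that $\Ped(B) = B \cap \Ped(C)$ for an arbitrary hereditary subalgebra $B \subseteq C$. Only the inclusion $\Ped(B) \subseteq B \cap \Ped(C)$ holds in general (functional calculus supported away from $0$ gives the same element whether computed in $B$ or in $C$), and this easy inclusion is all that (ii) $\Rightarrow$ (iii) needs. For a counterexample to the reverse inclusion, take $C = C_0((0,1])$ and the hereditary subalgebra (indeed closed ideal) $B = C_0((0,1))$ of functions vanishing at $1$; a continuous $g$ that is strictly positive precisely on $(1/2,1)$ has support $[1/2,1]$ in $(0,1]$, hence lies in $B \cap \Ped(C) = B \cap C_c((0,1])$, but its support $[1/2,1)$ in $(0,1)$ is not compact, so $g \notin \Ped(B) = C_c((0,1))$. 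The instance you actually need, $A \cap \Ped(A \tens \K) \subseteq \Ped(A)$ for the corner $A \tens e_{11}$, is precisely the nontrivial content of (iii) $\Rightarrow$ (ii) and cannot be assumed; the paper proves it by a genuinely different route: the given element $e$ is a full element of $\Ped(A \tens \K)$, which by \ccite{Proposition 3.1}{TT:ranks} forces $\Prim(A \tens \K) = \Prim(A)$ to be compact, which in turn yields a full element $a \in \Ped(A)$; then $\Ped(A \tens \K)$ is the algebraic ideal of $A \tens \K$ generated by $a^3$, and writing $e^{1/3} = \sum_i x_i a^3 y_i$ with $x_i, y_i \in A \tens \K$ gives $e = \sum_i \left(e^{1/3}x_i a\right) a \left(a y_i e^{1/3}\right)$ with the outer factors lying in $A$, so $e$ belongs to the algebraic ideal of $A$ generated by $a$ and hence to $\Ped(A)$.

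The second gap is the one you flag yourself in (ii) $\Rightarrow$ (i). Cohen--Hewitt factorisation only writes $x$ as a product of two elements of $A$; it does not factor $x$ \emph{through} the fixed element $a$, so it cannot by itself place $x$ in the algebraic ideal $\sum_j A a A$, and your closing paragraph concedes that the unit-free factorisation capturing the tail of $x$ remains to be done. What closes this step is exactly \ccite{Proposition 5.6.2}{Pedersen:CstarBook}: if $a \in \Ped(A)$ then $\her{a} \subseteq \Ped(A)$. Since your $a$ is strictly positive, $\her{a} = A$ and the implication is immediate --- the paper's entire proof of this direction is that citation, with $\sigma$-unitality coming for free from the existence of a strictly positive element. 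If you wish to avoid the citation, you must reprove that hereditarity statement for the Pedersen ideal; that, not the Cohen--Hewitt machinery, is where the real work of this implication lives.
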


\begin{proof}
\ref{PedACharacterization-Ped} $\Rightarrow$ \ref{PedACharacterization-StrPos1} $\Rightarrow$ \ref{PedACharacterization-StrPos2} are clear.

\ref{PedACharacterization-StrPos1} $\Rightarrow$ \ref{PedACharacterization-Ped} holds by \ccite{Proposition 5.6.2}{Pedersen:CstarBook}, which says that whenever $a \in \Ped(A)$, it follows that $\her{a} \subseteq \Ped(A)$.

\ref{PedACharacterization-StrPos2} $\Rightarrow$ \ref{PedACharacterization-StrPos1}: Let $e \in \Ped(A \tens \K) \cap A$ be strictly positive (for $A$).
Since $e$ is full in $A \tens \K$, it follows by \ccite{Proposition 3.1}{TT:ranks} that $\Prim(A \tens \K)$ is compact.
But $\Prim(A \tens \K) = \Prim(A)$, and then it follows by \ccite{Proposition 3.1}{TT:ranks} that $\Ped(A)$ contains a full element, $a$.

$a^3$ is also full, and so, $\Ped(A \tens \K)$ is the algebraic ideal of $A \tens \K$ generated by $a^3$.
By \ccite{Proposition 5.6.2}{Pedersen:CstarBook}, $e^{1/3} \in \Ped(A \tens \K)$.
Therefore, there exist $x_1,\dots,x_k,y_1,\dots,y_k \in A \tens \K$ such that
\[ e^{1/3} = \sum_{i=1}^k x_ia^3y_i, \]
and so
\[ e = \sum_{i=1}^k \left(e^{1/3}x_ia\right)\, a\, \left(ay_ie^{1/3}\right), \]
where $e^{1/3}x_ia, ay_ie^{1/3} \in A$.
Thus, $e$ is contained in the algebraic ideal of $A$ generated by $a$, which is clearly $\Ped(A)$.
\end{proof}

\begin{cor}
\alabel{ExistsAlgSimple}
Let $A$ be a $\sigma$-unital simple $C^*$-algebra.
Then there exists a nonzero hereditary subalgebra $B$ of $A$ such that $B$ is algebraically simple.
In particular, $A$ is stably isomorphic to $B$.
\end{cor}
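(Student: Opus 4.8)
The plan is to produce $B$ as the hereditary subalgebra generated by a suitable positive element of $\Ped(A)$, and to detect algebraic simplicity through the equality $B = \Ped(B)$. First I would record the reduction: for a \emph{simple} $C^*$-algebra $B$, being algebraically simple is equivalent to $B = \Ped(B)$. Indeed $\Ped(B)$ is the minimal dense two-sided ideal, and in a simple algebra every nonzero two-sided ideal is automatically dense and hence contains $\Ped(B)$; so $B$ has no proper nonzero two-sided ideal precisely when $\Ped(B) = B$. By Theorem \ref{PedACharacterization}, once $B$ is $\sigma$-unital this can in turn be checked through condition \ref{PedACharacterization-StrPos2}.

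For the construction, since $A$ is simple and nonzero its Pedersen ideal is nonzero, so I can choose a nonzero positive $a \in \Ped(A)$; concretely one may take $a = g_{\nu,\eta}(c)$ for a nonzero $c \in A^+$ with $0 < \nu < \eta < \|c\|$, which has the convenient feature of admitting a local unit $g_{\nu/2,\nu}(c) \in \Ped(A)$ (note $g_{\nu/2,\nu}(c)\, g_{\nu,\eta}(c) = g_{\nu,\eta}(c)$). Set $B := \her{a}$. As a nonzero hereditary subalgebra of a simple algebra, $B$ is simple and full in $A$, and $a$ is strictly positive in $B$, so $B$ is $\sigma$-unital. Brown's stable isomorphism theorem then applies to the full hereditary subalgebra $B \subseteq A$ (both $\sigma$-unital) and yields $A \tens \K \iso B \tens \K$; this is the asserted stable isomorphism.

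It remains to prove $B = \Ped(B)$, and by the reduction above it suffices to verify condition \ref{PedACharacterization-StrPos2} for $B$, namely that the strictly positive element $a$ lies in $\Ped(B \tens \K)$. The starting observation is that $a \tens e_{11} \in \Ped(A \tens \K)$: since $a$ has a local unit $y \in A^+$ (with $ya = a$), the element $y \tens e_{11}$ is a local unit for $a \tens e_{11}$, placing it in the Pedersen ideal of $A \tens \K$. I would then transport this membership along the isomorphism $A \tens \K \iso B \tens \K$, which carries $\Ped(A \tens \K)$ onto $\Ped(B \tens \K)$, taking care that the image is, or can be replaced by, a strictly positive element for $B$. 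Once $a \in \Ped(B\tens\K)$ is in hand, the implication \ref{PedACharacterization-StrPos2} $\Rightarrow$ \ref{PedACharacterization-Ped} of Theorem \ref{PedACharacterization} (the cube-trick argument, using that $\Prim(B\tens\K)$ is a point and hence compact via \ccite{Proposition 3.1}{TT:ranks}) descends this to $a \in \Ped(B)$, whence $B = \her{a} \subseteq \Ped(B)$ by \ccite{Proposition 5.6.2}{Pedersen:CstarBook} and $B = \Ped(B)$.

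The hard part is precisely this transport of Pedersen-ideal membership: an isomorphism furnished by Brown's theorem need not fix $a \tens e_{11}$, so one must either realize it compatibly with the inclusion $B \subseteq A$ (using the linking-algebra picture, since $B\tens\K$ is a full hereditary subalgebra of the stable algebra $A\tens\K$ and is itself stable) or else argue the membership $a \in \Ped(B\tens\K)$ directly. Here simplicity is genuinely essential: the naive hereditary statement $\Ped(B) = B \cap \Ped(A)$ is false in general — it already fails for $C_0$ of a relatively compact open subset, where $B \cap \Ped(A)$ contains functions whose support accumulates at the boundary — and it is the fullness of $B$ coming from simplicity, absent in the commutative picture, that must be exploited to make the transport go through.
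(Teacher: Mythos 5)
Your construction and overall route coincide with the paper's: take a nonzero positive $a\in\Ped(A)$, set $B:=\her{a}$, obtain the stable isomorphism from Brown's theorem, and detect algebraic simplicity of the simple algebra $B$ through the equality $B=\Ped(B)$ via Theorem \ref{PedACharacterization}. You have also correctly isolated the only nontrivial point, namely that membership of $a$ in the Pedersen ideal of the ambient algebra must be converted into membership in the Pedersen ideal of the hereditary subalgebra, and that this genuinely uses fullness (your $C_0(U)$ example is exactly the right obstruction). The paper itself is terse at this very spot, writing only $\Ped(A)\subset\Ped(A\tens\K)=\Ped(B\tens\K)$.

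The one genuine gap is that you stop at \emph{naming} this transport step rather than carrying it out: your final paragraph says one must ``either realize [the isomorphism] compatibly with the inclusion \ldots{} or else argue the membership directly,'' but does neither. The step can in fact be closed entirely inside $B$, with no tensoring by $\K$, by the same cube trick used in the proof of \ref{PedACharacterization-StrPos2} $\Rightarrow$ \ref{PedACharacterization-StrPos1} of Theorem \ref{PedACharacterization}. Fix $0<\e<\|a\|$ and set $g:=g_{\e/2,\e}(a)\in B$, so that $(a-\e)_+=g(a-\e)_+g$ has a local unit in $B$ and hence lies in $\Ped(B)$. Since $A$ is simple, $(a-\e)_+$ is full in $A$, so by minimality the ideal $\Ped(A)$ is precisely the algebraic ideal of $A$ generated by $(a-\e)_+$; thus $a$ is a finite sum of terms of the form $\lambda(a-\e)_+$, $x(a-\e)_+$, $(a-\e)_+y$ and $u(a-\e)_+v$ with $x,y,u,v\in A$. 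Multiplying on both sides by $a$ and inserting $g$ turns each resulting summand of $a^3$ into a product (element of $B$)$\cdot(a-\e)_+\cdot$(element of $B$), using $aAg\subseteq aA\overline{aAa}\subseteq\overline{aAa}=B$ and its adjoint; hence $a^3$ lies in the algebraic ideal of $B$ generated by $(a-\e)_+$, which is contained in $\Ped(B)$. As $a^3$ is strictly positive for $B$, the implication \ref{PedACharacterization-StrPos1} $\Rightarrow$ \ref{PedACharacterization-Ped} of Theorem \ref{PedACharacterization} gives $B=\Ped(B)$, and simplicity of $B$ then yields algebraic simplicity. With this inserted, your argument is complete and agrees with the paper's.
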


\begin{proof}
We merely take $b \in \Ped(A)$ nonzero and set $B:=\her{b}$.
Brown's Theorem \cite{Brown:StableIsomorphism} shows that $A$ is stably isomorphic to $B$.
Moreover, since $\Ped(A) \subset \Ped(A \tens \K) = \Ped(B \tens \K)$, it follows from Theorem \ref{PedACharacterization} that $B$ is algebraically simple.
\end{proof}

\begin{defn}
We shall use $T(A)$ to denote the set of densely finite (a.k.a.\ densely defined) traces, as defined in \ccite{Definition 5.2.1}{Pedersen:CstarBook}.
Every $\tau \in T(A)$ is defined on all of $\Ped(A)$.
For $\tau \in T(A)$, we denote
\[ \|\tau\| := \sup \{\tau(x): x \in A_+, \|x\| \leq 1\} \in [0,\infty]; \]
and we set
\[ T^1(A) := \{\tau \in T(A): \|\tau\| = 1\}. \]

For $\tau \in T(A)$ and $a \in A_+$, we set
\[ d_\tau(a) := \lim_{n \to \infty} \tau(a^{1/n}). \]
\end{defn}

In the unital case, $T^1(A)$ consists of exactly the traces which take the value $1$ at the unit.
We record the following easily verified generalization to the nonunital setting.

\begin{prop}
Let $e \in A_+$ be strictly positive.
Then for any $\tau \in T(A)$,
\[ \|\tau\| = d_\tau(e). \]
\end{prop}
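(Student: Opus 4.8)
The plan is to prove the two inequalities separately, after first normalizing. Scaling $e$ by a positive scalar preserves strict positivity and leaves $d_\tau$ unchanged, since $d_\tau(\lambda e) = \lim_n \lambda^{1/n}\tau(e^{1/n}) = d_\tau(e)$ for $\lambda > 0$ (because $\lambda^{1/n} \to 1$). So I would assume without loss of generality that $\|e\| \leq 1$. Then for $0 \leq t \leq 1$ one has $t^{1/n} \leq t^{1/(n+1)}$, so $(e^{1/n})_n$ is an increasing sequence in $A_+$ with $\|e^{1/n}\| \leq 1$; since $e$ is strictly positive, this is an approximate unit for $A$. Monotonicity of the (lower semicontinuous extension of the) trace then gives $d_\tau(e) = \sup_n \tau(e^{1/n})$, computed as a supremum in $[0,\infty]$.

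The inequality $d_\tau(e) \leq \|\tau\|$ is then immediate: each $e^{1/n}$ lies in the positive part of the unit ball, so $\tau(e^{1/n}) \leq \|\tau\|$ by the very definition of $\|\tau\|$, and taking the supremum over $n$ yields the claim. For the reverse inequality $\|\tau\| \leq d_\tau(e)$, I would fix an arbitrary $x \in A_+$ with $\|x\| \leq 1$ and bound $\tau(x)$. The key computation uses the trace identity
\[ \tau(x^{1/2} e^{1/n} x^{1/2}) = \tau(e^{1/(2n)} x e^{1/(2n)}), \]
obtained by applying $\tau(ab) = \tau(ba)$ with $a = x^{1/2} e^{1/(2n)}$ and $b = e^{1/(2n)} x^{1/2}$. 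Since $x \leq 1$ in the unitization, we have $e^{1/(2n)} x e^{1/(2n)} \leq e^{1/n}$, and monotonicity gives $\tau(x^{1/2} e^{1/n} x^{1/2}) \leq \tau(e^{1/n}) \leq d_\tau(e)$. On the other hand, because $(e^{1/n})$ is an approximate unit, $x^{1/2} e^{1/n} x^{1/2} \to x$ in norm, so lower semicontinuity of $\tau$ forces
\[ \tau(x) \leq \liminf_n \tau(x^{1/2} e^{1/n} x^{1/2}) \leq d_\tau(e). \]
Taking the supremum over all such $x$ gives $\|\tau\| \leq d_\tau(e)$, and combined with the previous paragraph this yields equality.

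The only real care needed is in the bookkeeping for densely defined traces: the quantities $\tau(e^{1/n})$ and $\tau(x)$ should be read as values of the lower semicontinuous extension of $\tau$ to $A_+$ with values in $[0,\infty]$, and the two tools I rely on --- monotonicity and lower semicontinuity --- are exactly the defining properties of such an extension (cf.\ the cited reference of Pedersen). I expect this to be the main, though minor, obstacle; everything else is a short manipulation. In particular there are no finiteness issues, since all the inequalities are valid in $[0,\infty]$ and the case $\|\tau\| = \infty$ is handled uniformly.
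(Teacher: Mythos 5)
Your proof is correct and takes essentially the same route as the paper's: normalize $e$ to be contractive, obtain $d_\tau(e) \leq \|\tau\|$ from $\|e^{1/n}\| \leq 1$, and obtain the reverse inequality by expressing $\tau(x)$ as a limit of $\tau(e^{1/(2n)}xe^{1/(2n)})$ via the trace identity and bounding this by $\tau(e^{1/n})$. The paper performs the same computation in the form $\tau(e^{1/n}ae^{1/n}) \leq \|a\|\tau(e^{2/n})$; your explicit appeal to monotonicity and lower semicontinuity merely spells out a step the paper leaves implicit.
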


\begin{proof}
We may assume that $e$ is contractive, since rescaling $e$ does not change $d_\tau(e)$.

For $a \in A_+$,
\[ \tau(a) = \lim_{n \to \infty} \tau(e^{1/n}ae^{1/n}) \leq \lim_{n \to \infty} \|a\|\tau(e^{2/n}) = \|a\|d_\tau(e); \]
and therefore, $\|\tau\| \leq d_\tau(e)$.
On the other hand, $\tau(e^{1/n}) \leq \|\tau\|$ for all $n$, hence $\|\tau\| \geq d_\tau(e)$. 
\end{proof}

\begin{prop}
\alabel{PedAProperties}
Let $A$ satisfy the equivalent conditions of Theorem \ref{PedACharacterization}.
Then
\begin{enumerate}
\item \alabel{PedAProperties-Bdd} Every densely finite trace on $A$ is bounded;
\item \alabel{PedAProperties-BddBelow}If $a \in A_+$ is full then
\[ \inf_{\tau \in T^1(A)} \tau(a) > 0. \]
\end{enumerate}
\end{prop}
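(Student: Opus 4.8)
The plan is to reduce both statements to a single device: a \emph{local unit} for the strictly positive element. Fix a contractive strictly positive element $e$ for $A$; since $A=\Ped(A)$, we have $e\in\Ped(A)$, and the theory of the Pedersen ideal (\ccite{Section 5.6}{Pedersen:CstarBook}) provides $f\in A_+$ with $0\le f\le 1$ and $fe=ef=e$. As $fe=ef=e$, the element $f$ commutes with $e$, and passing to the commutative $C^*$-algebra generated by $e$ and $f$ one checks that $f$ (hence also $f^{1/2}$) acts as a unit on the whole of $C^*(e)$, since it equals $1$ on the spectrum of $e$ away from $0$; in particular $f^{1/2}e^{1/n}f^{1/2}=e^{1/n}$ for every $n$. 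This element $f$, together with the identity $\|\tau\|=d_\tau(e)$ from the preceding Proposition, will drive everything.

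For \ref{PedAProperties-Bdd} it suffices to bound $d_\tau(e)=\lim_n\tau(e^{1/n})$. Using $e^{1/n}=f^{1/2}e^{1/n}f^{1/2}$ and $\|e^{1/n}\|\le 1$ one has $f^{1/2}e^{1/n}f^{1/2}\le f$, whence $\tau(e^{1/n})\le\tau(f)$ for all $n$. Since $f\in A=\Ped(A)$, the densely finite trace $\tau$ is finite at $f$, so $\|\tau\|=d_\tau(e)\le\tau(f)<\infty$ and $\tau$ is bounded.

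For \ref{PedAProperties-BddBelow} the same estimate run in the other direction gives, for every $\tau\in T^1(A)$, $\tau(f)\ge\lim_n\tau(e^{1/n})=d_\tau(e)=\|\tau\|=1$. It remains to dominate $\tau(f)$ by a fixed multiple of $\tau(a)$. Here I would exploit fullness: the algebraic ideal generated by $a$ is dense (its closure is the closed ideal generated by the full element $a$, i.e.\ all of $A$) and is contained in $\Ped(A)=A$; since $\Ped(A)$ is the smallest dense ideal, this algebraic ideal equals $A$, so we may write $f=\sum_{j=1}^k s_j a t_j$ with $s_j,t_j\in A$. Applying $\tau$ and estimating each term by the Cauchy--Schwarz inequality for traces, $|\tau(s_jat_j)|=|\tau((a^{1/2}s_j^*)^*(a^{1/2}t_j))|\le\tau(s_jas_j^*)^{1/2}\tau(t_j^*at_j)^{1/2}\le\|s_j\|\,\|t_j\|\,\tau(a)$, so that $\tau(f)\le\big(\sum_j\|s_j\|\,\|t_j\|\big)\tau(a)$. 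Combined with $\tau(f)\ge 1$ this yields $\tau(a)\ge\big(\sum_j\|s_j\|\,\|t_j\|\big)^{-1}$, a bound independent of $\tau$, proving the claim.

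The main obstacle is the first step: knowing that $A=\Ped(A)$ furnishes an honest local unit $f$ for the strictly positive element $e$ (as opposed to a mere approximate unit), together with the resulting comparison $\tau(f)\ge\|\tau\|$. Once this is secured, part \ref{PedAProperties-Bdd} is immediate, and part \ref{PedAProperties-BddBelow} needs only the routine passage from fullness to a finite algebraic expression $f=\sum_j s_jat_j$ and the standard trace Cauchy--Schwarz estimate.
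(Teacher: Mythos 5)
Your first step is false, and it is fatal. If $e\in A_+$ is strictly positive then $\overline{eA}=A$, so any $f\in A$ with $fe=e$ satisfies $f(ea)=ea$ for all $a\in A$ and hence, by density, $fb=b$ for every $b\in A$; together with $ef=e$ this makes $f$ a unit for $A$. Thus the ``honest local unit'' you want exists precisely when $A$ is unital, which is the trivial case here. The point being missed is that membership in $\Ped(A)$ does \emph{not} furnish local units: the positive part of the Pedersen ideal consists of elements \emph{dominated by finite sums} of elements admitting local units (Pedersen's generating cone $K(A)_0^+$), and a strictly positive element of a nonunital algebra can lie in $\Ped(A)$ --- this is exactly condition \ref{PedACharacterization-StrPos1} of Theorem \ref{PedACharacterization} --- without itself admitting one. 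Everything downstream collapses with this step: the bound $\tau(e^{1/n})\le\tau(f)$ in \ref{PedAProperties-Bdd} and the chain $1=\|\tau\|\le\tau(f)\le C\tau(a)$ in \ref{PedAProperties-BddBelow} both require the nonexistent $f$.

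The correct mechanism, and the one the paper uses, is Cuntz comparison: $e\in\Ped(A)$ and $a$ full give $[e]\ll\infty[a]$ in $\Cu(A)$, hence $[e]\le M[(a-\e)_+]$ for some $M\in\N$ and $\e>0$, and then
\[ \|\tau\|=d_\tau(e)\le M\, d_\tau((a-\e)_+)\le \frac M\e\,\tau(a), \]
which yields both parts at once: $\tau(a)<\infty$ since $a\in\Ped(A)$, and $\tau(a)\ge\e/M$ when $\|\tau\|=1$. Note that your fallback device --- writing elements as $\sum_j s_jat_j$ and applying Cauchy--Schwarz --- cannot substitute for this. It is true that the algebraic ideal generated by $a$ is all of $A=\Ped(A)$ (that part of your argument is correct), so each $e^{1/n}$ admits such a decomposition and one gets $\tau(e^{1/n})\le C_n\,\tau(a)$; but the constant $C_n$ depends on the chosen decomposition of $e^{1/n}$ and there is no a priori uniform bound as $n\to\infty$. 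The uniformity over $n$ is precisely what the single comparison $[e]\le M[(a-\e)_+]$ supplies.
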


\begin{proof}
Let $e \in A_+$ be strictly positive, let $a \in A_+$ be full, and let $\tau \in T(A)$.
Since $e$ is in the Pedersen ideal, we have
\[ [e] \ll \infty[a] \]
in $\Cu(A)$, and therefore, there exists $M$ and $\e$ such that
\[ [e] \leq M[(a-\e)_+]. \]
It follows that
\begin{align*}
\|\tau\| &= d_\tau(e) \\
&\leq M d_\tau((a-\e)_+) \\
&\leq \frac M\e \tau(a). 
\end{align*}

\ref{PedAProperties-Bdd} now follows: since $a \in \Ped(A)$, we know that $\tau(a) < \infty$.

\ref{PedAProperties-BddBelow}  also follows since this shows that
\[ \inf_{\tau \in T^1(A)} \tau(a) \geq \frac\e M. \]
\end{proof}

We will most often use $T^1(A)$ to access the traces on $A$.
However, another base of $T(A)$ is at times better; it is
\[ T_{a \to 1}(A) := \{\tau \in T(A): \tau(a) = 1\}. \]

\begin{prop} \ccite{Proposition 3.4}{TT:ranks}
\alabel{TChoquet}
Suppose that $a \in \Ped(A)_+$ is full.
It follows that $T_{a \mapsto 1}(A)$ is:
\begin{enumerate}
\item a base for $T(A)$
\item compact, in the topology of pointwise converge on the Pedersen ideal of $A$;
\item a Choquet simplex.
\end{enumerate}
\end{prop}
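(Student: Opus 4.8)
The plan is to handle the three assertions separately, feeding off the comparison estimates already assembled in Proposition \ref{PedAProperties}, and to isolate the Choquet-simplex statement as the genuinely hard part. For (i), the point is that the positive linear functional $\tau \mapsto \tau(a)$ on the cone $T(A)$ is strictly positive off $0$; granting this, $T_{a \mapsto 1}(A)$ is exactly its $1$-level set, so every nonzero $\tau$ decomposes uniquely as $\tau(a)\cdot\big(\tau/\tau(a)\big)$ with $\tau/\tau(a)\in T_{a\mapsto 1}(A)$, which is precisely what it means to be a base. To see $\tau(a)>0$ for $\tau\neq 0$, I would argue contrapositively: if $\tau(a)=0$ then for every $x\in A$ the trace property gives $\tau(x^*ax)=\tau(a^{1/2}xx^*a^{1/2})\le\|x\|^2\tau(a)=0$, so $\tau$ vanishes on the algebraic ideal generated by $a$; since $a$ is full and lies in $\Ped(A)$, that ideal is all of $\Ped(A)$ (as in the proof of Theorem \ref{PedACharacterization}), forcing $\tau=0$.

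For (ii), I would exhibit $T_{a\mapsto 1}(A)$ as a closed subset of a compact product of intervals. The key is a uniform bound: for $x\in\Ped(A)_+$, fullness of $a$ gives $[x]\ll\infty[a]$ in $\Cu(A)$, hence $[x]\le M[(a-\e)_+]$ for some $M,\e$, and then, exactly as in Proposition \ref{PedAProperties}, $\tau(x)\le M d_\tau((a-\e)_+)\le (M/\e)\tau(a)=M/\e$ for every $\tau\in T_{a\mapsto 1}(A)$. Thus restriction-and-evaluation embeds $T_{a\mapsto 1}(A)$ into $\prod_{x\in\Ped(A)_+}[0,M_x/\e_x]$, which is compact by Tychonoff. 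The defining conditions — additivity and homogeneity on $\Ped(A)$, positivity, the trace identity, and the normalization $\tau(a)=1$ — are each preserved under pointwise limits, so the image is closed; the one point requiring care is that a pointwise limit, a priori only a positive trace functional on the $*$-algebra $\Ped(A)$, genuinely extends to a densely finite trace on $A$, which follows from Pedersen's correspondence between such functionals and elements of $T(A)$.

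For (iii), I would use the standard characterization that a compact convex set is a Choquet simplex precisely when the cone it generates — here $T(A)$ with base $T_{a\mapsto 1}(A)$ — has the Riesz decomposition property: whenever $\rho\le\sigma_1+\sigma_2$ in $T(A)$, one can write $\rho=\rho_1+\rho_2$ with $\rho_i\le\sigma_i$. The cleanest route is through the enveloping von Neumann algebra: each densely finite trace extends to a normal semifinite trace on $A^{**}$, which factors as $\mu\circ T$ through the center-valued trace $T$ on the finite part, with $\mu$ a normal weight on the abelian center; the required decomposition of $\rho$ then reduces to a decomposition of the corresponding measure $\mu$ on an abelian algebra, where Riesz decomposition is automatic. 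I expect this reduction to be the main obstacle: one must verify that passing to $A^{**}$ respects the densely-finite/semifinite bookkeeping and the orderings on both sides in the nonunital setting, and that the center-valued-trace factorization is genuinely available for the relevant semifinite traces rather than only for finite ones.
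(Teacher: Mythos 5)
The paper offers no proof of this proposition; it is imported verbatim from \ccite{Proposition 3.4}{TT:ranks}, so there is no in-paper argument to compare against and your proposal has to be judged on its own terms. Parts (i) and (ii) are sound. For (i), strict positivity of $\tau \mapsto \tau(a)$ via vanishing on the algebraic ideal generated by $a$ (which equals $\Ped(A)$ by minimality of the Pedersen ideal among dense ideals), together with finiteness of $\tau(a)$ for $a \in \Ped(A)$, is exactly the right mechanism for exhibiting a base. For (ii), the Tychonoff embedding works (modulo a harmless missing factor of $\|x\|$ in the bound $\tau(x) \leq \|x\| M d_\tau((a-\e)_+)$), and you have correctly isolated where the real content lies: that a pointwise limit, a priori only a positive tracial functional on $\Ped(A)$, is the restriction of a unique densely finite lower semicontinuous trace on $A$. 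That bijection is a genuine theorem of Pedersen and carries the entire weight of the compactness claim; invoking it is legitimate, but it is the crux rather than a footnote.

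The gap is in (iii). Reducing the simplex property to Riesz decomposition in the cone $T(A)$ is correct, but the mechanism you name for establishing it --- factoring the normal extension of a trace through the center-valued trace on the finite part of $A^{**}$ --- is not available. A densely finite trace on $A$ extends to a normal \emph{semifinite} trace on $A^{**}$ whose support projection need not be finite, and a center-valued trace exists only on finite von Neumann algebras. Already for $A = \K$ the extension is the standard trace on $B(\ell^2)$, a type $\mathrm{I}_\infty$ factor whose finite central summand is zero, so the proposed factorization degenerates; your own hedge about whether the factorization is ``genuinely available for semifinite traces'' is precisely the point at which the argument breaks. The repair is standard: use the Radon--Nikodym theorem for normal semifinite traces (Pedersen--Takesaki; equivalently, uniqueness of the trace on semifinite factors), which gives, for normal semifinite traces $\rho \leq \sigma$ on a von Neumann algebra, a positive $h$ affiliated with the \emph{center}, $0 \leq h \leq 1$ on the support of $\sigma$, with $\rho = \sigma(h\,\cdot)$. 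Given $\rho \leq \sigma_1 + \sigma_2 =: \sigma$, write $\rho = \sigma(h\,\cdot)$ and $\sigma_i = \sigma(k_i\,\cdot)$ with $k_1 + k_2$ the support of $\sigma$, and set $\rho_i := \sigma(hk_i\,\cdot)$; then $\rho = \rho_1 + \rho_2$, $\rho_i \leq \sigma_i$, and domination by $\sigma_i$ forces the restrictions of $\rho_i$ to $A$ to be densely finite and lower semicontinuous. The bookkeeping you flag (that the order and addition on $T(A)$ match those on normal traces of $A^{**}$) does still have to be checked, but it is routine once the correct Radon--Nikodym tool is in hand. Alternatively, all of (iii) can be outsourced to the theorem of Elliott, Robert, and Santiago that the cone of lower semicontinuous traces on any $C^*$-algebra is a lattice, which is how this statement is usually justified in the literature.
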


We will now consider some comparison and divisibility properties that appear in \ccite{Definitions 3.1 and 3.5}{Winter:pure}, in the unital case.
Some of these have already been phrased to (near) satisfaction in a nonunital setting: namely, $m$-comparison, as defined in \ccite{Definition 2.8}{OrtegaPereraRordam}, and $m$-almost divisible Cuntz semigroup, as defined in \ccite{Definition 3.5 (i)}{Winter:pure}.
One finds that the other properties generalize reasonably if we insist that we work with algebraically simple $C^*$-algebras (which do, indeed, generalize unital simple algebras).
However, one important modification has been made to the definitions of (tracial) $m$-almost-divisibility, namely that a nonzero requirement has been added.
It seems, given the way that these definitions are used, that this altered definition is what was actually intended in \cite{Winter:pure}.
The modification is, additionally, rather mild; see Remark \ref{DivNonzeroRmk} below.
The author would like to thank Leonel Robert for pointing out that this modification had been overlooked in an earlier version of this paper.

We restrict to exact $C^*$-algebras to avoid considering quasitraces (using the results of Haagerup \cite{Haagerup:quasitraces} and Blanchard-Kirchberg \cite{BlanchardKirchberg:pi} that all quasitraces on these algebras are traces; cf.\ also \cite{BrownWinter:quasitraces}).

\begin{defn}
\alabel{CompDivDefn}
Let $A$ be an algebraically simple, exact $C^*$-algebra and let $m \in \N$, $\dl > 0$
\begin{enumerate}
\item
We say that $A$ (or $W(A)$) has \textbf{$m$-comparison} if, given $[x],[y_0],\dots,[y_m] \in W(A)$, if there exists $k \in \mathbb{N}$ such that
\[ (k+1)[x] \leq k[y_i] \]
for $i=0,\dots,m$ then it follows that
\[ [x] \leq [y_0] + \cdots + [y_m]. \]
\item
We say that $A$ has \textbf{strong tracial $m$-comparison} if, for any nonzero contractions $a,b \in M_\infty(A)_+$, if
\[ d_\tau(a) < \frac1{m+1} \tau(b) \]
for all $\tau \in T^1(A)$ then
\[ [a] \leq [b] \]
in $W(A)$.
\item
We say that $A$ (or $W(A)$) is \textbf{$m$-almost divisible} if, for any $[a] \in W(A) \setminus \{0\}$ and any $k \in \mathbb{N}$, there exists $[x] \in W(A) \setminus \{0\}$ such that
\[ k[x] \leq [a] \leq (k+1)(m+1)[x]. \]
\item
We say that $A$ is \textbf{tracially $m$-almost divisible} if for any positive contraction $b \in M_\infty(A)_+, k \in \N, \e > 0$, there exists a nonzero c.p.c.\ order zero map $\psi:M_k \to \her{b}$ such that
\[ \tau(\psi(1_k)) \geq \dl\tau(b) - \e \]
for all $\tau \in T^1(A)$.
\end{enumerate}
\end{defn}

\begin{remark}
\alabel{DivNonzeroRmk}
If $T(A) \neq \emptyset$ then clearly the definitions of (tracial) $m$-almost-divisibility are unchanged if we don't ask that $[x]$ ($\psi$) is nonzero (since evaluating on a trace shows that they are automatically nonzero).
On the other hand, if $T(A) = \emptyset$ and $A$ is nonelementary, then Glimm's Halving Lemma \ccite{Lemma 6.7.1}{Pedersen:CstarBook} shows that $A$ is tracially $0$-almost divisible.
Likewise, if $T(A) = \emptyset$ and $A$ has $m$-comparison then, again using Glimm's Halving Lemma, one obtains that $A$ is $m$-almost divisible.
\end{remark}

Let us establish some relationships between these properties.

\begin{prop}
\alabel{CompDivRelationships}
Let $A$ be an algebraically simple, exact $C^*$-algebra.
\begin{enumerate}
\item \alabel{CDR-CompEquiv} The following are equivalent.
\begin{enumerate}
\item[(a)] $A$ has $m$-comparison;
\item[(b)] Given $[x],[y_0],\dots,[y_m] \in W(A) \setminus \{0\}$, if
\[ d_\tau(x) < d_\tau(y_i) \]
for all $\tau \in T^1(A)$ and for $i=0,\dots,m$ then it follows that
\[ [x] \leq [y_0] + \cdots + [y_m]; \quad\text{and}\]
\item[(c)] Given $[x],[y_0],\dots,[y_m] \in W(A) \setminus \{0\}$, if there exists $k \in \mathbb{N}$ such that
\[ d_\tau(x) < \tau(y_i) \]
for all $\tau \in T^1(A)$ and for $i=0,\dots,m$, then it follows that
\[ [x] \leq [y_0] + \cdots + [y_m]. \]
\end{enumerate}
\item \alabel{CDR-SCompEquiv} $A$ has strong tracial $m$-comparison if and only if, for any nonzero contractions $a,b \in M_\infty(A)_+$, if
\[ d_\tau(a) < \frac{1-\e}{m+1} \tau(b) \]
for all $\tau \in T^1(A)$ then
\[ [a] \leq [b] \]
in $W(A)$.
\item \alabel{CDR-CuDiv} If $A$ has stable rank one or finite radius of comparison then $W(A)$ is $m$-almost divisible if $\Cu(A)$ is $m$-almost divisible.
\item \alabel{CDR-TrDivEquiv} $A$ is tracially $m$-almost divisible if and only if for any $a \in M_\infty(A)_+, k \in \N, \e > 0$, there exists a nonzero c.p.c.\ order zero maps $\psi:M_k \to \her{a}$ such that
\[ \tau(\psi(1_k)) \geq \frac{1-\e}{m+1}\tau(a) \]
for all $\tau \in T(A)$.
\item \alabel{CDR-DivTrDiv} If $A$ is $m$-almost divisible then $A$ is tracially $m$-almost divisible.
\item \alabel{CDR-CompTrDivTrComp} For any $m$ there exists $\tilde m$ (independent of $A$) such that, if $A$ has $m$-comparison and is tracially $m$-almost divisible then $A$ has strong tracial $m$-comparison.
\end{enumerate}
\end{prop}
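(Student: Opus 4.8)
The plan is to derive strong tracial $\tilde m$-comparison by feeding $m$-comparison, in its tracial reformulation \ref{CDR-CompEquiv}(b), a supply of $m+1$ mutually orthogonal positive elements sitting inside $\her{b}$ and manufactured by tracial $m$-almost divisibility. I expect $\tilde m := 2(m+1)^2 - 1$ to work, the factor $(m+1)^2$ arising because one factor of $m+1$ is spent on the number of comparison terms and another on the multiplicative loss inherent in divisibility.

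So let $a,b\in M_\infty(A)_+$ be nonzero contractions with $d_\tau(a) < \frac{1}{\tilde m+1}\tau(b)$ for every $\tau\in T^1(A)$; the target is $[a]\le[b]$. First I would reduce to cut-downs: since $[a]\le[b]$ holds as soon as $[(a-\e)_+]\le[b]$ for every $\e>0$ (a standard property of Cuntz comparison), it suffices to bound each $(a-\e)_+$. Fixing $\e>0$, I would then apply tracial $m$-almost divisibility to $b$ in the form \ref{CDR-TrDivEquiv}, with $k=m+1$ and tolerance $1/2$, to produce a nonzero c.p.c.\ order zero map $\psi\colon M_{m+1}\to\her{b}$ with $\tau(\psi(1_{m+1}))\ge\frac{1}{2(m+1)}\tau(b)$ for all $\tau\in T(A)$.

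Setting $y_i:=\psi(e_{i+1,i+1})$ for $i=0,\dots,m$ gives the desired pieces. They are nonzero (as $\psi\ne 0$), mutually orthogonal, and sum to $\psi(1_{m+1})\in\her{b}$, so $[y_0]+\cdots+[y_m]=[\psi(1_{m+1})]\le[b]$. Since $\tau\circ\psi$ is a tracial functional on $M_{m+1}$, the pieces all carry equal trace, whence $\tau(y_i)=\frac1{m+1}\tau(\psi(1_{m+1}))\ge\frac1{2(m+1)^2}\tau(b)=\frac{1}{\tilde m+1}\tau(b)$. Using $d_\tau(y_i)\ge\tau(y_i)$ (valid because $y_i$ is a contraction), for every $\tau\in T^1(A)$ and each $i$ one obtains
\[ d_\tau((a-\e)_+)\le d_\tau(a) < \tfrac{1}{\tilde m+1}\tau(b)\le\tau(y_i)\le d_\tau(y_i). \]
Now \ref{CDR-CompEquiv}(b) yields $[(a-\e)_+]\le[y_0]+\cdots+[y_m]\le[b]$, and letting $\e\to0$ gives $[a]\le[b]$.

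The delicate point is obtaining a \emph{strict} comparison of dimension functions that \ref{CDR-CompEquiv}(b) can consume: a priori one might fear needing compactness of $T^1(A)$ to upgrade the pointwise hypothesis to a uniform gap, but passing to $(a-\e)_+$ already makes the crucial inequality $d_\tau((a-\e)_+)\le d_\tau(a)<\frac1{\tilde m+1}\tau(b)$ strict pointwise, so no compactness is required and the remaining care is purely in the constant $\tilde m$. Finally, if $T^1(A)=\emptyset$ the divisibility map $\psi$ still exists while the hypothesis of \ref{CDR-CompEquiv}(b) is vacuous, so the identical construction delivers $[(a-\e)_+]\le[b]$ directly; thus the traceless case needs no separate argument.
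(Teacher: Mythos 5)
Your argument for item \ref{CDR-CompTrDivTrComp} is essentially correct, and in substance it is the same proof as the paper's (the paper defers to Winter's Proposition 3.9, whose mechanism is exactly yours: use tracial divisibility to cut $b$ into $m+1$ mutually orthogonal, tracially large pieces $y_i$ inside $\her{b}$, check $d_\tau((a-\e)_+) < \tau(y_i) \leq d_\tau(y_i)$ pointwise on $T^1(A)$, and feed this to the tracial reformulation (i)(b) of $m$-comparison). Your observation that passing to $(a-\e)_+$ makes the strict inequality pointwise, so that no compactness of the trace space is needed at this stage, is correct, and absorbing the $\e$-loss of item \ref{CDR-TrDivEquiv} into the constant $\tilde m = 2(m+1)^2-1$ lets you bypass item \ref{CDR-SCompEquiv} entirely. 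The details you elide --- that $\psi(e_{i+1,i+1}) \neq 0$ whenever $\psi \neq 0$ (which follows from the structure theorem for order zero maps), that $\tau\circ\psi$ is tracial on $M_{m+1}$, and the trivial case $(a-\e)_+ = 0$ --- are all fine.

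The genuine gap is one of coverage: the statement is a six-part proposition and you have proved only part \ref{CDR-CompTrDivTrComp}, and moreover your proof of that part consumes parts \ref{CDR-CompEquiv} and \ref{CDR-TrDivEquiv} as black boxes. As a proof of the full proposition the proposal is therefore incomplete, and circular as written unless those inputs are supplied. Each of the remaining items requires its own (mostly short) argument: (i)(a)$\Leftrightarrow$(b) rests on the Goodearl--Handelman/Ortega--Perera--R{\o}rdam representation result for functionals on $W(A)$; \ref{CDR-SCompEquiv} and \ref{CDR-DivTrDiv} are adaptations of Winter's Propositions 3.4 and 3.8, where the nonunital point is to replace the compact space $QT(A)$ by the compact base $T_{a\mapsto 1}(A)$ of Proposition \ref{TChoquet} so that Dini's theorem still applies; \ref{CDR-CuDiv} needs heredity of $W(A)$ in $\Cu(A)$ under stable rank one or finite radius of comparison; and \ref{CDR-TrDivEquiv} needs the uniform bounds $\tau(a) \in (r,R)$ over $T^1(A)$ supplied by Proposition \ref{PedAProperties}. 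None of this appears in your proposal.
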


\begin{proof}
The arguments for \ref{CDR-CompEquiv} (b) $\Leftrightarrow$ (c), \ref{CDR-SCompEquiv}, \ref{CDR-DivTrDiv}, and \ref{CDR-CompTrDivTrComp} are exactly the same as the proofs of Propositions 3.3, 3.4, 3.8 and 3.9 in \cite{Winter:pure} respectively, as long as we use $T_{e \mapsto 1}(A)$ in place of $QT(A)$ (particularly for the use of Dini's Theorem), use the result of \ref{CDR-TrDivEquiv} in the proof of \ref{CDR-DivTrDiv}, and use the results of \ref{CDR-CompEquiv}, \ref{CDR-SCompEquiv}, and \ref{CDR-TrDivEquiv} in the proof of \ref{CDR-CompTrDivTrComp}.
Since the relevant inequalities are linear in the traces $\tau$ involved, they hold for all $\tau \in T_{e \to 1}(A)$ iff they hold for all $\tau \in T^1(A)$, iff they hold for all $\tau \in T(A)$.

\ref{CDR-CompEquiv} (a) $\Leftrightarrow$ (b): This follows by \ccite{Proposition 2.1}{OrtegaPereraRordam} (the essential ingredient there being \ccite{Lemma 4.1}{GoodearlHandelman:Ranks}).

\ref{CDR-CuDiv} follows from the fact that $W(A)$ is hereditary in $\Cu(A)$ under the stated hypotheses, by \ccite{Theorem 4.4.1}{BRTTW} and \ccite{Theorem 3}{CowardElliottIvanescu} respectively.

\ref{CDR-TrDivEquiv} follows from the observation that, fixing $a \in M_\infty(A)_+$, there exist $r,R \in (0,\infty)$ such that
\[ \tau(a) \in (r, R) \]
for all $\tau \in T^1(A)$ (which in turn follows from Proposition \ref{PedAProperties}).
\end{proof}

We now establish some technical results concerning traces.

\begin{lemma}
\alabel{TraceExtension}
Let $\tau \in T(A)$ be bounded.
Then $\tau$ extends uniquely to a bounded trace $\tilde\tau$ on $\mathcal{M}(A)$ satisfying $\|\tilde\tau\| = \|\tau\|$; for $b \in \mathcal{M}(A)$,
\begin{align*}
\tilde\tau(b) &= \sup \{\tau(aba): a \in A, \|a\| \leq 1\}; \\
&= \sup_\lambda \tau(e_\lambda be_\lambda)
\end{align*}
for any approximate identity $(e_\lambda)$.
\end{lemma}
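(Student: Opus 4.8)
The plan is first to reduce to the case that $\tau$ is a bounded positive tracial functional defined on all of $A$: since $\tau \in T(A)$ is bounded and defined on the dense ideal $\Ped(A)$, it extends by norm-continuity to a bounded positive trace on $A$ of the same norm, and any extension to $\mathcal{M}(A)$ is determined by this. I would then obtain existence by passing to the enveloping von Neumann algebra. Viewing $\tau$ as an element of $A^*$, it has a canonical weak-$*$-continuous (hence normal) extension $\bar\tau$ to $A^{**}$ with $\|\bar\tau\| = \|\tau\|$. The trace identity $\bar\tau(xy) = \bar\tau(yx)$, which holds for $x,y \in A$, extends to all of $A^{**}$: fixing $y \in A$, both sides are ultraweakly continuous in $x$ and agree on the ultraweakly dense subalgebra $A$, so they agree for $x \in A^{**}$; then fixing $x \in A^{**}$ and varying $y$ gives the identity on $A^{**} \times A^{**}$. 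Restricting $\bar\tau$ along the canonical embedding $\mathcal{M}(A) \hookrightarrow A^{**}$ produces a bounded trace $\tilde\tau$ on $\mathcal{M}(A)$; it extends $\tau$, and $\|\tau\| \le \|\tilde\tau\| \le \|\bar\tau\| = \|\tau\|$, so $\|\tilde\tau\| = \|\tau\|$.

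Next I would read off the two displayed formulas. For an approximate identity $(e_\lambda)$ of $A$ one has $e_\lambda \to 1$ ultraweakly in $A^{**}$, and since multiplication is separately ultraweakly continuous on bounded sets, $e_\lambda b e_\lambda \to b$ ultraweakly for each $b \in \mathcal{M}(A)$; as $e_\lambda b e_\lambda \in A$ and $\bar\tau$ is normal, this yields $\tilde\tau(b) = \lim_\lambda \tau(e_\lambda b e_\lambda)$. For $b \in \mathcal{M}(A)_+$ and a self-adjoint contraction $a \in A$, the trace identity gives $\tau(aba) = \tilde\tau(b^{1/2}a^2b^{1/2}) \le \tilde\tau(b)$, because $a^2 \le 1$ forces $b^{1/2}a^2b^{1/2} \le b$; since the positive contractions $e_\lambda$ realize this supremum in the limit, $\tilde\tau(b) = \sup\{\tau(aba) : a \in A,\ \|a\| \le 1\}$ (the value is unchanged by restricting to self-adjoint $a$). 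Taking $(e_\lambda)$ arbitrary here also shows both formulas are independent of the chosen approximate identity.

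For uniqueness I would argue directly, which simultaneously reconfirms the formula. Suppose $\tilde\tau'$ is any bounded trace on $\mathcal{M}(A)$ with $\tilde\tau'|_A = \tau$ and $\|\tilde\tau'\| = \|\tau\|$. For $b \in \mathcal{M}(A)_+$, the trace identity and positivity give
\[ \tau(e_\lambda b e_\lambda) = \tilde\tau'(b^{1/2}e_\lambda^2 b^{1/2}) \le \tilde\tau'(b), \]
while on the other hand
\[ \tilde\tau'(b) - \tau(e_\lambda b e_\lambda) = \tilde\tau'\big(b^{1/2}(1 - e_\lambda^2)b^{1/2}\big) \le \|b\|\,\tilde\tau'(1 - e_\lambda^2) = \|b\|\big(\|\tau\| - \tau(e_\lambda^2)\big), \]
using $\tilde\tau'(1) = \|\tilde\tau'\| = \|\tau\|$, the trace identity to rewrite $b^{1/2}(1-e_\lambda^2)b^{1/2}$ as $(1-e_\lambda^2)^{1/2}b(1-e_\lambda^2)^{1/2}$, and the order estimate $(1-e_\lambda^2)^{1/2}b(1-e_\lambda^2)^{1/2} \le \|b\|(1-e_\lambda^2)$. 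Since $\tau(e_\lambda^2) \to \|\tau\|$ for any approximate identity, the right-hand side tends to $0$, so $\tilde\tau'(b) = \lim_\lambda \tau(e_\lambda b e_\lambda)$, a quantity independent of $\tilde\tau'$; linearity then forces $\tilde\tau' = \tilde\tau$.

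The main obstacle is the trace property of the extension. A purely elementary computation with an approximate identity reduces $\tilde\tau(xy)$ and $\tilde\tau(yx)$ to $\lim_\lambda \tau(y e_\lambda^2 x)$ and $\lim_\lambda \tau(x e_\lambda^2 y)$ respectively, but the trace identity on $A$ only lets one permute factors lying in $A$ and cannot commute the two genuine multipliers $x, y$ past each other; this is exactly what the separate ultraweak continuity of multiplication on $A^{**}$ supplies, and is why I route existence through the enveloping von Neumann algebra. A minor point to check along the way is that the $A^{**}$-restriction agrees with the approximate-identity formula, which is precisely the normality statement $\bar\tau(e_\lambda b e_\lambda) \to \bar\tau(b)$ used above.
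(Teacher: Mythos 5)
Your proof is correct, but it reaches existence by a genuinely different route from the paper. The paper never leaves $\mathcal{M}(A)$: it fixes an \emph{increasing} approximate identity, defines $\tilde\tau(b):=\sup_\lambda\tau(e_\lambda b e_\lambda)$, uses monotonicity of $\lambda\mapsto\tau(b^{1/2}e_\lambda^2b^{1/2})$ to see this supremum is a limit (hence additive), and then verifies the trace identity by hand from the trace identity on $A$ together with the inequality $\tilde\tau(x^*e_\lambda^2x)\le\tilde\tau(x^*x)$ and a symmetry argument -- precisely sidestepping the obstacle you correctly identify, that one cannot commute two genuine multipliers past each other using only the trace property on $A$. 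You instead pass to $A^{**}$, where normality of $\bar\tau$ plus separate ultraweak continuity of multiplication and ultraweak density of $A$ deliver the trace identity on all of $A^{**}$ at once; restricting along $\mathcal{M}(A)\hookrightarrow A^{**}$ then gives everything. Your approach is shorter and more conceptual but imports the enveloping von Neumann algebra; the paper's is longer but entirely elementary and self-contained. Your uniqueness argument (showing any norm-preserving tracial extension must satisfy the limit formula) also differs from the paper's, which observes that $\tau_0-\tilde\tau$ is a positive trace of norm $\tau_0(1)-\tilde\tau(1)=0$; both are fine, and yours has the bonus of reproving the approximate-identity formula for arbitrary $(e_\lambda)$. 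One small imprecision: separate ultraweak continuity of multiplication does not by itself give $e_\lambda b e_\lambda\to b$ ultraweakly, since both factors vary simultaneously; you should either first reduce to $b\ge 0$ and use the already-established trace identity to rewrite $\bar\tau(e_\lambda b e_\lambda)=\bar\tau(b^{1/2}e_\lambda^2 b^{1/2})$ (one-sided multiplications only), or invoke the fact that $e_\lambda\to 1$ \emph{strongly} in the universal representation together with joint strong continuity of multiplication on bounded sets. Since your uniqueness computation independently establishes the limit formula, this gap is harmless either way.
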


\begin{proof}
Let us first take $(e_\lambda)$ to be an increasing approximate unit (which exists by \ccite{Theorem 1.4.2}{Pedersen:CstarBook}), and show that
\[ \tilde\tau(b) := \sup_\lambda \tau(e_\lambda b e_\lambda) \]
defines a bounded trace on $\mathcal{M}(A)$.

Note that if $\lambda < \lambda'$ then $e_\lambda^2 \leq e_{\lambda'}^2$ and so
\[ \tau\left(e_\lambda b e_\lambda\right) = \tau\left(b^{1/2}e_\lambda^2b^{1/2}\right) \leq \tau\left(b^{1/2}e_{\lambda'}^2b^{1/2}\right), \]
and therefore,
\[ \tilde\tau(b) = \lim_\lambda \tau(e_\lambda b e_\lambda). \]
From this, it is clear that $\tilde\tau$ is additive and positive.
It is also evident that $\tilde\tau|_A = \tau$.

To see that it satisfies the trace identity, note that for any $x \in \mathcal{M}(A)$ and any $\lambda$, we have
\[ \tau(e_\lambda x^*xe_\lambda) = \tau(x^*e_\lambda^2x) = \tilde\tau(x^*e_\lambda^2x) \leq \tilde\tau(x^*x), \]
and therefore, $\tilde\tau(x^*x) \leq \tilde\tau(xx^*)$.
The trace identity follows by symmetry.

Let us now show that, for any (not necessarily increasing) approximate identity $(e_\lambda)$, and any $b \in \mathcal{M}(A)$, we have
\[ \sup \{\tau(aba): a \in A_+, \|a\| \leq 1\} = \sup_\lambda \tau(e_\lambda be_\lambda). \]

The inequality $\geq$ is automatic.
Conversely, for any contractive $a \in A_+$ and any $\e > 0$, we can find $\lambda$ such that $e_\lambda a \approx_\e a$; it follows that
\begin{align*}
\tau(aba) &\approx_{2\e\|\tau\|\|b\|} \tau(a e_\lambda b e_\lambda a) \\
&= \tau\left(b^{1/2} e_\lambda a^2 e_\lambda b^{1/2}\right) \\
&\leq \tau\left(b^{1/2} e_\lambda^2 b^{1/2}\right) \\
&= \tau(e_\lambda b e_\lambda).
\end{align*}
Since $\e$ is arbitrary, this establishes the inequality $\leq$.

This shows that the formulae for $\tilde\tau$ are equivalent (and by the above, $\tilde\tau$ is a tracial functional).

Finally, we show uniqueness.
Suppose that $\tau_0$ is another trace on $\mathcal{M}(A)$ such that $\tau_0|_A = \tau$ and $\|\tau_0\| = \|\tau\| = \|\tilde\tau\|$.
Notice that we clearly have $\tilde\tau \leq \tau_0$, and in particular, $\tau_0 -\tilde\tau$ is itself a trace.
Moreover,
\[ \|\tau_0 - \tilde\tau\| = \tau_0(1) - \tilde\tau(1) = \|\tau_0\| - \|\tilde\tau\| = 0, \]
and therefore, $\tau_0 = \tilde\tau$.
\end{proof}

\begin{prop}
\alabel{Trace1}
Let $a \in A_+$ be a positive contraction and suppose $\tau \in T^1(A)$ satisfies $\tau(a) = 1$.
Then
\[ \tau(a^{1/2}xa^{1/2}) = \tau(x) \]
for all $x \in A_+$ and
\[ \tau(f(a)) = f(1) \]
for all $f \in C_0((0,1])_+$.
\end{prop}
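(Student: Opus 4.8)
The plan is to exploit the equality case of the inequality $\|\tau\| = \tau(a)$ by passing to the multiplier algebra. Since $\tau \in T^1(A)$ is bounded, Lemma \ref{TraceExtension} extends it to a bounded trace $\tilde\tau$ on $\mathcal{M}(A)$ with $\|\tilde\tau\| = \|\tau\| = 1$. As $\mathcal{M}(A)$ is unital and $\tilde\tau$ is positive, one has $\tilde\tau(1) = \|\tilde\tau\| = 1 = \tau(a)$, so the positive element $1 - a \in \mathcal{M}(A)_+$ satisfies $\tilde\tau(1-a) = 0$. This single observation is the crux of both assertions.

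For the first identity I would use the trace property to rewrite $\tau(a^{1/2}xa^{1/2}) = \tau(xa)$, so that the claim becomes $\tilde\tau(x(1-a)) = 0$ for $x \in A_+$ (note $x(1-a) = x - xa \in A$). Writing $x(1-a) = \bigl(x(1-a)^{1/2}\bigr)(1-a)^{1/2}$ and applying the Cauchy--Schwarz inequality for the positive functional $\tilde\tau$ gives
\[ |\tilde\tau(x(1-a))|^2 \leq \tilde\tau(x(1-a)x)\,\tilde\tau(1-a) = 0, \]
since the second factor vanishes while the first is finite ($\tilde\tau$ being bounded). Hence $\tau(x) = \tau(xa) = \tau(a^{1/2}xa^{1/2})$.

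For the second identity I would feed the first one back in: taking $x = a^n$ yields $\tau(a^{n+1}) = \tau(a^n)$, so by induction $\tau(a^n) = \tau(a) = 1$ for every $n \geq 1$; equivalently $\tau(p(a)) = p(1)$ for every polynomial $p$ with no constant term. A general $f \in C_0((0,1])_+$ is approximated uniformly on $[0,1]$ by such polynomials (apply the Weierstrass theorem to $f$ and subtract its constant term, which is small because $f(0) = 0$). Since $a$ is a contraction we have $\|f(a) - p(a)\| \leq \|f - p\|_\infty$, and since $\|\tau\| = 1$ it follows that $\tau(f(a)) = \lim p(1) = f(1)$, using that $1 \in [0,1]$.

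The only genuine obstacle is the opening step: recognizing that the hypothesis $\tau(a) = 1 = \|\tau\|$ forces $\tilde\tau(1-a) = 0$ once one extends to $\mathcal{M}(A)$, and then arranging the Cauchy--Schwarz estimate so that this vanishing factor appears. Everything afterward is a routine trace manipulation together with a standard approximation argument.
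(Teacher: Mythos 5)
Your proposal is correct and follows essentially the same route as the paper: both extend $\tau$ to $\tilde\tau$ on $\mathcal{M}(A)$ via Lemma \ref{TraceExtension}, exploit the equality case $\tilde\tau(1-a)=\|\tau\|-\tau(a)=0$, and finish the second claim by polynomial approximation. The only cosmetic difference is that you obtain $\tau(x(1-a))=0$ in one stroke via Cauchy--Schwarz, whereas the paper derives the two one-sided inequalities $\tau(x^{1/2}(1-a)x^{1/2})\leq \tilde\tau(1-a)=0$ and $x^{1/2}ax^{1/2}\leq x$ separately.
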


\begin{proof}
Define $\tilde\tau \in T(\mathcal{M}(A))$ by Lemma \ref{TraceExtension}.
Then
\begin{align*}
\tau(x) - \tau(a^{1/2}xa^{1/2}) &= \tau((1-a)x) \\
&= \tau(x^{1/2}(1-a)x^{1/2}) \\
&\leq \tilde\tau(1-a) \\
&= 1 - \tau(a) = 0;
\end{align*}
on the other hand, $x^{1/2}ax^{1/2} \leq x$ provides the other inequality, so that
\[ \tau(x) = \tau(a^{1/2}xa^{1/2}). \]

Elementary computation shows that the second statement follows from the first, in the case that $f$ is a polynomial.
For arbitrary $f \in C_0((0,1])$, $\tau(f(a)) = f(1)$ follows by approximating by $f$ polynomials.
\end{proof}

\section{Nuclear dimension and order zero maps}
\alabel{DimNucSec}

The definition of nuclear dimension rests on the notion of order zero completely positive contractive maps.
They were first defined in \ccite{Definition 2.1 (b)}{Winter:CovDim1} for finite dimensional domains (which is all that is used in the present article).
The definition in \ccite{Definition 2.1 (b)}{Winter:CovDim1} is slightly different from, though equivalent (in the case of finite dimensional domains) to, the more elegant definition we state here.

\begin{defn}\ccite{Definition 1.3}{WinterZacharias:Order0}
Let $A,B$ be $C^*$-algebras and let $\phi:A \to B$.
We say that $\phi$ is a \textbf{completely positive contractive (c.p.c.) order zero} map if it is completely positive, contractive, and preserves orthogonality in the sense that if $a,b \in A_+$ satisfy $ab=0$ then $\phi(a)\phi(b)=0$.
\end{defn}

\begin{defn}\ccite{Definition 2.1}{WinterZacharias:NucDim}
Let $A$ be a $C^*$-algebra.
We say that $A$ has \textbf{nuclear dimension }$n$ if $n$ is the least number for which, given $\e > 0$ and a finite subset $\mathcal{F} \subset A$, there exists a finite dimensional $C^*$-algebra $F$ and maps
\[ A \labelledrightarrow{\psi} F \labelledrightarrow{\phi} A \]
such that:
\begin{enumerate}
\item for all $a \in \mathcal{F}$, $\|\phi\psi(a)-a\| < \e$;
\item $\psi$ is c.p.c.; and
\item $F$ decomposes as
\[ F=F_0 \dsum \cdots \dsum F_n \]
such that $\phi|_{F_i}$ is a c.p.c.\ order zero map for each $i$.
\end{enumerate}
We will call such
\begin{equation}
\alabel{DecompApproximation}
 A \labelledrightarrow{\psi} F \labelledrightarrow{\phi} A
\end{equation}
an \textbf{$(n+1)$-colourable c.p.\ approximation} of $\mathcal{F}$ to within $\e$.
\end{defn}


The following is an extremely useful result, completely revealing the structure of order zero maps from finite dimensional $C^*$-algebras.
(It also holds for order zero maps from arbitrary $C^*$-algebras \ccite{Corollary 3.1}{WinterZacharias:Order0}, though this generalization won't be used here.)

\begin{prop} \ccite{Proposition 3.2}{Winter:CovDim1}
\alabel{OrderZeroStructure}
Let $A,F$ be $C^*$-algebras with $F$ finite dimensional, and let $\phi:F \to A$ be a c.p.c.\ order zero map.
Then there exists a $*$-homomorphism $\hat\phi:C_0((0,1],F) \to A$ such that
\[ \phi(x) = \hat\phi(\id_{(0,1]} \tens x) \]
for all $x \in F$.
\end{prop}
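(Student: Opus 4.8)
The plan is to reconstruct the associated $*$-homomorphism $\hat\phi$ from the c.p.c. order zero map $\phi \colon F \to A$ via functional calculus applied to $\phi(1_F)$, treating $h := \phi(1_F)$ as the image of $\id_{(0,1]} \tens 1_F$. The key idea is that the orthogonality-preservation and complete positivity of $\phi$ should force $\phi$ to factor through a $*$-homomorphism on the cone $C_0((0,1],F) \iso C_0((0,1]) \tens F$, with the generator $\id_{(0,1]}$ being sent to $h$.

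First I would pass to the bidual (or the enveloping von Neumann algebra of $A$) and consider the strictly continuous extension $\phi^{**} \colon F^{**} \to A^{**}$, which remains order zero. Here the central idea is to produce a \emph{supporting} element: set $h := \phi(1_F) \in A_+$, a positive contraction. I would show that the map $\pi$ defined on $F$ by $\pi(x) := h^{-1/2}\phi(x)h^{-1/2}$ (interpreted in $A^{**}$, using the support projection of $h$) is a $*$-homomorphism, and that $h$ commutes with every element in the range of $\pi$. The orthogonality condition is what makes $\pi$ multiplicative: for orthogonal positive elements the products vanish, and since $F$ is finite-dimensional (hence spanned by such configurations of matrix units across its blocks), multiplicativity on products of matrix units follows, then extends by linearity and $*$-preservation. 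The commutation of $h$ with $\pi(F)$ follows because the supporting element of an order zero map lies in the commutant of the structure maps—this is essentially the content of the structure theorem in the finite-dimensional case.

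Once I have a $*$-homomorphism $\pi \colon F \to (A^{**})$ and a positive contraction $h$ commuting with $\pi(F)$ and satisfying $\phi(x) = h\pi(x)$ for all $x \in F$, I would define $\hat\phi \colon C_0((0,1],F) \to A$ by setting $\hat\phi(f \tens x) := f(h)\pi(x)$ on elementary tensors and extending. Since $h$ commutes with $\pi(F)$ and $\pi$ is multiplicative, $\hat\phi$ is a $*$-homomorphism on the algebraic tensor product; I would then check that it is bounded and lands inside $A$ (not merely $A^{**}$) by verifying it on generators $\id_{(0,1]} \tens x \mapsto h\pi(x) = \phi(x) \in A$ and using that $C^*$-algebra generated by $h$ and $\phi(F)$ sits in $A$. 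The defining relation $\phi(x) = \hat\phi(\id_{(0,1]} \tens x)$ then holds by construction.

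\textbf{The main obstacle} I expect is the rigorous justification that $\pi = h^{-1/2}\phi(\,\cdot\,)h^{-1/2}$ is genuinely multiplicative and that $h$ commutes with its range, since both $h$ and $\phi(x)$ need not commute in $A$ a priori, and $h^{-1/2}$ only makes sense in the bidual after cutting by the support projection of $h$. Controlling the passage between $A$ and $A^{**}$—specifically ensuring that the resulting $\hat\phi$ takes values in $A$ rather than just in the bidual—is the delicate point; I would handle it by noting that $\hat\phi$ maps the generating set into $A$ and that $A$ is closed, so the $C^*$-subalgebra generated by the image of the generators lies in $A$. The finite-dimensionality of $F$ is what makes the multiplicativity argument tractable, since I can argue block-by-block and reduce to the behaviour of $\phi$ on matrix units within a single matrix block and on orthogonal elements across distinct blocks.
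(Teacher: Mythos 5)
Your overall architecture --- extract a supporting $*$-homomorphism $\pi$ with $\phi(\cdot) = h\,\pi(\cdot)$ for $h := \phi(1_F)$, then send $f \otimes x \mapsto f(h)\pi(x)$ --- is exactly the argument behind the result being cited (Winter's Proposition 3.2, in its modern Winter--Zacharias form); the paper itself gives no proof, only the citation. Your final assembly of $\hat\phi$, including the observation that its image lies in $C^*(\phi(F)) \subseteq A$ because $\{\id_{(0,1]} \otimes x : x \in F\}$ generates $C_0((0,1],F)$, is correct.

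The gap is that the two load-bearing claims are asserted rather than proved, and the reasons you offer for them do not stand on their own. For the commutation of $h$ with the range you appeal to ``the content of the structure theorem,'' which is circular. For multiplicativity of $\pi = h^{-1/2}\phi(\cdot)h^{-1/2}$ you argue from orthogonality ``on matrix units across the blocks''; but the off-diagonal matrix units are not positive, and orthogonality of the images of the diagonal matrix units alone cannot force multiplicativity --- compression of $M_n$ onto its diagonal is u.c.p., sends the $e_{ii}$ to orthogonal projections summing to $1$, and is not a homomorphism. What actually closes both gaps is that $F$ is linearly spanned by its projections and that every projection $p \in F$ is orthogonal to $1_F - p$: from $\phi(p)\phi(1_F - p) = 0$ one gets $\phi(p)h = \phi(p)^2 = h\phi(p)$, whence $[h,\phi(F)] = 0$ by linearity; then $\pi(p)^2 = h^{-2}\phi(p)^2 = h^{-1}\phi(p) = \pi(p)$, so every projection of $F$ realizes equality in the Schwarz inequality and hence lies in the multiplicative domain of the u.c.p.\ map $\pi$. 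Since the multiplicative domain is a $C^*$-subalgebra and the projections span $F$, $\pi$ is a $*$-homomorphism. With these two short arguments inserted at the points you flagged as obstacles, your proof is complete.
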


We now record a fundamental fact concerning colourable c.p.\ approximations.

\begin{lemma}
\alabel{GoodDnApproximations}
(cf.\ \ccite{Proposition 4.2}{Winter:pure}
Fix $n$.
Given a finite subset $\mathcal{F} \subset A$ and a tolerance $\e > 0$, there exists $\dl > 0$ and a finite subset $\mathcal{G} \subset A$ such that, if $e \in A_+$ is a positive contraction satisfies
\[ ey \approx_\dl y \]
for all $y \in \mathcal{F}$, and
\[ (F = F^{(0)} \dsum \cdots \dsum F^{(n)}, \psi, \phi) \]
is a $(n+1)$-colourable c.p.\ approximation of $\mathcal{G} \cup \{e\}$ to within $\dl$ then, for all $a \in \mathcal{F}$ and $i=0,\dots,n$,
\begin{enumerate}
\item $\|[\phi\psi(a),\phi^{(i)}\psi^{(i)}(e)]\| < \e$; 
\item $a\phi^{(i)}\psi^{(i)}(e) \approx_\e \phi^{(i)}\psi^{(i)}(a)$; and
\end{enumerate}
\end{lemma}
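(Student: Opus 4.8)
The plan is to isolate conclusion (ii) as the essential statement and to deduce (i) from it. First I would put the data in a convenient normal form: enlarging $\mathcal F$ and rescaling (which only affects the constants), we may assume that every element of $\mathcal F$ is a contraction and that $\mathcal F = \mathcal F^*$, so that the hypothesis $ey \approx_\dl y$ together with self-adjointness also yields $ye \approx_\dl y$ for $y \in \mathcal F$. I would then set $\mathcal G := \mathcal F \cup e\mathcal F \cup \mathcal F e \cup \mathcal F \cdot \mathcal F \cup \{e\}$ and choose $\dl$ small, depending only on $\e$, $n$ and $\mathcal F$. The purpose of feeding these products into the approximation is that $\phi\psi(b) \approx_\dl b$ for $b \in \mathcal G \cup \{e\}$ then supplies both $\phi\psi(a) \approx_\dl a$ for $a \in \mathcal F$ and, writing $E := \phi\psi(e) = \sum_i \phi^{(i)}\psi^{(i)}(e)$, the relation $E \approx_\dl e$; note that $0 \le \phi^{(i)}\psi^{(i)}(e) \le E$ and $\|\phi^{(i)}\psi^{(i)}(e)\| \le 1$, so each $\phi^{(i)}\psi^{(i)}(e)$ is the colour-$i$ piece of a partition of the approximate local unit $e$. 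This is the nonunital stand-in for the supports $\phi^{(i)}(1_{F^{(i)}})$ appearing in the unital argument of \cite[Proposition 4.2]{Winter:pure}.

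Next I would reduce (i) to (ii). Since $\phi\psi(a) \approx_\dl a$ and $\|\phi^{(i)}\psi^{(i)}(e)\| \le 1$, the commutator in (i) satisfies
\[ \bigl\|[\phi\psi(a),\phi^{(i)}\psi^{(i)}(e)] - [a,\phi^{(i)}\psi^{(i)}(e)]\bigr\| \le 2\dl. \]
Applying (ii) to $a$ gives $a\,\phi^{(i)}\psi^{(i)}(e) \approx \phi^{(i)}\psi^{(i)}(a)$, and applying (ii) to $a^* \in \mathcal F$ and taking adjoints (using that $\phi^{(i)}\psi^{(i)}$ is $*$-preserving and $\phi^{(i)}\psi^{(i)}(e)$ is self-adjoint) gives the mirror relation $\phi^{(i)}\psi^{(i)}(e)\,a \approx \phi^{(i)}\psi^{(i)}(a)$. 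Subtracting yields $[a,\phi^{(i)}\psi^{(i)}(e)] \approx 0$, and combined with the displayed estimate this gives (i) with room to spare once $\dl$ is small. So everything comes down to (ii).

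For (ii) the main tool is the order zero structure of Proposition \ref{OrderZeroStructure}: writing $h_i := \phi^{(i)}(1_{F^{(i)}})$, the supporting $*$-homomorphism $\hat\phi^{(i)}$ yields the identity $\phi^{(i)}(x)\phi^{(i)}(y) = h_i\,\phi^{(i)}(xy)$ for $x,y \in F^{(i)}$, so in particular $h_i$ commutes with the range of $\phi^{(i)}$. Using $ea \approx_\dl a$ and that $\psi$ is contractive I have $\phi^{(i)}\psi^{(i)}(a) \approx_\dl \phi^{(i)}\psi^{(i)}(ea)$, so it suffices to establish the per-colour transfer $\phi^{(i)}\psi^{(i)}(ea) \approx a\,\phi^{(i)}\psi^{(i)}(e)$. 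The mechanism here is first to convert $\psi^{(i)}(ea)$ into a product of $\psi^{(i)}(e)$ and $\psi^{(i)}(a)$ up to a small defect, using the Kadison--Schwarz inequality for the completely positive map $\psi^{(i)}$ together with the good approximation of the products in $\mathcal G$, and then to feed this product through the order zero identity so that multiplication is mediated by the support $h_i$ and by $\phi^{(i)}\psi^{(i)}(e)$.

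The hard part is exactly this per-colour transfer, and the difficulty is twofold: $\psi$ is only completely positive, not multiplicative, so the Kadison--Schwarz defects must be controlled uniformly; and the ranges of distinct colours $\phi^{(i)}$, $\phi^{(j)}$ need neither be orthogonal nor commute, so one cannot simply expand $a \approx \phi\psi(a) = \sum_j \phi^{(j)}\psi^{(j)}(a)$ and discard the cross terms. The resolution, following the computation of \cite[Proposition 4.2]{Winter:pure}, exploits that the $\phi^{(i)}\psi^{(i)}(e)$ form a partition of the local unit subordinate to the supports $h_i$: because $e$ is a two-sided approximate unit on $\mathcal F$ and $\sum_i \phi^{(i)}\psi^{(i)}(e) \approx e$, the element $a$ acts on the hereditary subalgebra of colour $i$ like its colour-$i$ reconstruction, which delivers the transfer relation after a routine estimate through $\hat\phi^{(i)}$. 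The only genuinely new point relative to the unital case is checking that this computation survives the replacement of the global unit by the two-sided local unit $e$; tracking the accumulated errors (a fixed multiple of $\dl$, with multiplicities at most $n+1$) and choosing $\dl$ accordingly then completes the proof.
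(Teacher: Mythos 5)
Your proposal is correct and follows essentially the same route as the paper: the paper's proof simply defers the core colour-wise estimate to the first part of the proof of Proposition 4.3 of Winter–Zacharias (modulo a typo correction and bookkeeping), which is the same computation you invoke via \cite[Proposition 4.2]{Winter:pure}, and your reduction of (i) to (ii) via $a$ and $a^*$ matches the author's own reasoning. The only cosmetic slip is that $\mathcal{G}$ must be chosen before $e$ is given, so it cannot literally contain $e\mathcal{F}\cup\mathcal{F}e$; this is harmless since $ey\approx_\dl y$ puts those elements within $\dl$ of $\mathcal{F}$.
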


\begin{proof}
This follows from the first part of the proof of \ccite{Proposition 4.3}{WinterZacharias:NucDim} and some bookkeeping, although a typo should be pointed out that slightly obscures this: Equation (5) of \cite{WinterZacharias:NucDim} should read
\[ \|\phi^{(i)}_\lambda \tilde\psi^{(i)}_\lambda(a) - \phi^{(i)}_\lambda \tilde\psi^{(i)}_\lambda(e_\lambda) \phi_\lambda \tilde\psi(a)\| \labelledrightarrow{\lambda \to \infty} 0. \]
\end{proof}

\section{The asymptotic sequence algebra}
\alabel{AsympSeqSec}

The asymptotic sequence algebra features prominently in the main arguments of this article (even more so than in \cite{Winter:pure}).
It is used to concisely convey approximate notions; it is not essential, in the sense that statements and arguments could all be done without it, in the language of approximation (see Proposition \ref{SequenceTraces} for an example of how one translates between the asymptotic sequence algebra and approximation).

\begin{defn}
Let $A$ be a $C^*$-algebra.
The \textbf{asymptotic sequence algebra} of $A$ is defined as
\[ \textstyle{A_\infty := \prod_{n=1}^\infty A \big/ \Big\{(a_n) \in \prod_{n=1}^\infty A: \lim_{n \to \infty} \|a_n\| = 0\Big\}.} \]
We shall use $\pi_\infty:\prod_{n=1}^\infty A \to A_\infty$ to denote the quotient map and $\iota:A \to A_\infty$ to denote the canonical inclusion given by $\iota(a) := \pi_\infty(a,a,\dots)$.
\end{defn}

\begin{defn}
Let $A$ be a $C^*$-algebra.
We set denote by $T^1_\infty(A)$ the set of all $\tau \in T(A_\infty)$ for which there exists a sequence $(\tau_n) \subseteq T^1(A)$ and an ultrafilter $\omega$ such that
\begin{equation}
\alabel{Tinfty-Defn}
 \tau(\pi_\infty(a_n)) = \lim_{n \to \omega} \tau_n(a_n)
\end{equation}
for all $\pi_\infty(a_n) \in A_\infty$.
\end{defn}

\begin{remark*}
Notice that for any sequence $(\tau_n) \subseteq T^1(A)$ and any ultrafilter $\omega$, the right-hand side of \eqref{Tinfty-Defn} defines a trace of norm one in $T^1_\infty(A)$.
In particular, $T^1_\infty(A) \subseteq T^1(A)$.
\end{remark*}

\begin{prop}
\alabel{SequenceTraces}
Let $(a_n) \subset A$ be a bounded sequence of self-adjoint elements and set $a := \pi_\infty(a_n) \in A_\infty$.
Let $r \in \mathbb{R}$.
Then:
\begin{enumerate}
\item \alabel{SequenceTraces-inf} $\tau(a) > r$ for all $\tau \in T^1_\infty(A)$ if and only if
\[ \liminf_{n \to \infty} \inf_{\tau \in T^1(A)} \tau(a_n) > r; \]
\item \alabel{SequenceTraces-sup} $\tau(a) < r$ for all $\tau \in T^1_\infty(A)$ if and only if
\[ \limsup_{n \to \infty} \sup_{\tau \in T^1(A)} \tau(a_n) < r. \]
\end{enumerate}
\end{prop}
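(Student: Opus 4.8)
The plan is to prove statement \ref{SequenceTraces-inf} directly and then deduce \ref{SequenceTraces-sup} from it by applying \ref{SequenceTraces-inf} to the sequence $(-a_n)$ with threshold $-r$: indeed $\tau(a) < r$ is equivalent to $\tau(-a) > -r$, while $\inf_{\tau\in T^1(A)} \tau(-a_n) = -\sup_{\tau\in T^1(A)} \tau(a_n)$ and $\liminf_n\big(-\sup_{\tau} \tau(a_n)\big) = -\limsup_n \sup_{\tau} \tau(a_n)$, so the two statements are formally interchanged. Throughout I treat $\omega$ as a free ultrafilter; this is in fact forced, since for the formula \eqref{Tinfty-Defn} to descend to $A_\infty$ the ultrafilter must annihilate sequences tending to $0$, hence must contain the cofinite filter. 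I abbreviate $s_n := \inf_{\tau\in T^1(A)} \tau(a_n)$, which is a bounded real sequence because $(a_n)$ is bounded and every $\tau \in T^1(A)$ is contractive.

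For the easy implication ($\Leftarrow$ of \ref{SequenceTraces-inf}), I would assume $\liminf_n s_n > r$ and fix an arbitrary $\tau \in T^1_\infty(A)$, represented as $\tau(\pi_\infty(b_n)) = \lim_{n\to\omega} \tau_n(b_n)$ with $(\tau_n) \subseteq T^1(A)$. Choosing $r'$ with $r < r' < \liminf_n s_n$, one has $s_n > r'$ for all large $n$, and since each $\tau_n \in T^1(A)$ this gives $\tau_n(a_n) \geq s_n > r'$ for all large $n$. As the relevant cofinite set lies in $\omega$, the limit passes the bound: $\tau(a) = \lim_{n\to\omega}\tau_n(a_n) \geq r' > r$. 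This direction is routine.

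The substance is the reverse implication ($\Rightarrow$ of \ref{SequenceTraces-inf}), which I would prove contrapositively: assuming $\liminf_n s_n \leq r$, I construct a single $\tau \in T^1_\infty(A)$ with $\tau(a) \leq r$. First, for each $n$ pick $\tau_n \in T^1(A)$ nearly attaining the infimum, so that $s_n \leq \tau_n(a_n) < s_n + \tfrac1n$. Next — the key step — I invoke the standard fact that, as $\omega$ ranges over free ultrafilters, the values $\lim_{n\to\omega} s_n$ realize exactly the cluster points of the bounded sequence $(s_n)$; since $\liminf_n s_n$ is such a cluster point, there is a free ultrafilter $\omega$ with $\lim_{n\to\omega} s_n = \liminf_n s_n \leq r$. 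Defining $\tau$ via \eqref{Tinfty-Defn} from this $(\tau_n)$ and $\omega$ places $\tau \in T^1_\infty(A)$, and squeezing $s_n \leq \tau_n(a_n) < s_n + \tfrac1n$ under the $\omega$-limit (using $\lim_{n\to\omega}\tfrac1n = 0$) yields $\tau(a) = \lim_{n\to\omega}\tau_n(a_n) = \lim_{n\to\omega} s_n \leq r$.

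The main obstacle is precisely the selection of the ultrafilter in the reverse direction: one must arrange that the $\omega$-limit of the infima $s_n$ realizes the \emph{liminf} rather than some larger cluster value, as this is exactly what lets the approximating traces $\tau_n$ assemble into a legitimate element of $T^1_\infty(A)$ witnessing $\tau(a) \leq r$. Once the correct free $\omega$ is in hand the remaining estimates amount to a one-line squeeze, so essentially all the content lies in this ultrafilter-realization step, together with the preliminary observation that only free ultrafilters yield well-defined traces on $A_\infty$.
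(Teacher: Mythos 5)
Your proposal is correct and follows essentially the same route as the paper: the easy direction is routine, part (ii) is deduced from (i) by negating $a_n$, and the hard direction is handled contrapositively by choosing near-infimizing traces $\tau_n$ and an ultrafilter concentrated where $\inf_{\tau\in T^1(A)}\tau(a_n)$ approaches its liminf (the paper phrases this as choosing a subsequence $(n_k)$ realizing the liminf and an ultrafilter containing $\{n_k\}$, which is the same idea). Your side remark that the ultrafilter must be free for \eqref{Tinfty-Defn} to descend to $A_\infty$ is a correct observation left implicit in the paper.
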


\begin{proof}
\ref{SequenceTraces-inf}: The reverse implication is quite evident.
Conversely, suppose that
\[ \liminf_{n \to \infty} \inf_{\tau \in T^1(A)} \tau(a_n) \leq r. \]
This means that there exists an unbounded infinite sequence $(n_k)$ of indices, together with $\tau_{n_k} \in T^1(A)$ for each $k$ such that
\[ \lim_{k \to \infty} \tau_{n_k}(a_{n_k}) \leq r. \]
Pick $\tau_n$ arbitrarily for $n \not\in \{n_k\}$, and let $\omega$ be an ultrafilter containing the set $\{n_k\}$.
Then set $\tau := \lim_{n \to \omega} \tau_n(\cdot) \in T^1_\infty(A)$, and we find that
\[ \tau(a_n) = \lim_k \tau_{n_k}(a_{n_k}) \leq r, \]
as required.

\ref{SequenceTraces-sup} follows from \ref{SequenceTraces-inf} by replacing $a_n$ by $-a_n$.
\end{proof}

\subsection{The diagonal sequence argument}
\alabel{DiagArgSect}
The diagonal sequence argument will be used very frequently to allow us to prove the existence of elements in $A_\infty$ satisfying some conditions (exactly), by only verifying that the conditions hold approximately.
In order to use the diagonal sequence argument, the rules are as follows:
\begin{enumerate}
\item[(a)]
The conditions need to take the form $f(a) = 0$, where $f:A_\infty \to [0,\infty]$ is a function of the form
\begin{equation}
\alabel{DiagArgCondition}
 f(\pi_\infty(a_n)) = \limsup_{n \to \infty} f_n(a_n),
\end{equation}
for some functions $f_n$; we say that $a$ approximately satisfies the condition (up to a tolerance $\e > 0$) if $f(a) < \e$.
(At times, we seek not one but finitely or even countably many elements satisfying joint conditions, and therefore ask for multivariable versions of \eqref{DiagArgCondition})
\item[(b)] 
We may use at most countably many conditions; abusing terminology which is anyhow meant to be informal, we shall use the name ``admissable condition'' to refer to a countable collection of such conditions (or a statement that is equivalent to a countable collection of such conditions).
Approximately satisfying a countable family of conditions means approximately satisfying (up to $\e$) a (specified) finite set of them. 
\item[(c)]
We must include a condition that implies that $\|a\| \leq R$ for some (fixed) $R$.
\end{enumerate}

Here are important examples of admissable conditions.
\begin{enumerate}
\item \alabel{DA-comm} Given a separable subspace $X \subset A_\infty$, we have the condition on $a \in A_\infty$ that
\begin{equation}
\alabel{DiagArg-Commuting}
[a,X] = 0;
\end{equation}
approximately satisfying this condition means, for a finite subset $\mathcal{F} \subset X$ and a tolerance $\e > 0$,
\[ \|[a,x]\| < \e \]
for all $x \in \mathcal{F}$.
Certainly, we find that \eqref{DiagArg-Commuting} is equivalent to the countable collection of conditions $f^{(k)}(a) = \|[x_k,a]\|$, where $(x_k) \subset X$ is a dense sequence.
It is not hard to see that these $f^{(k)}$'s have the form \eqref{DiagArgCondition}.

\item \alabel{DA-unit1} Given a separable subspace $X \subset A_\infty$, we have the condition on $a \in A_\infty$ that
\[ ax=x, \quad \forall\, x \in X; \]
approximately satisfying this condition means, for a finite subset $\mathcal{F} \subset X$ and a tolerance $\e > 0$,
\[ \|ax-x\| < \e \]
for all $x \in \mathcal{F}$.
Again, a sequence of conditions of the form \eqref{DiagArgCondition} can be formed using a dense sequence from $X$.

\item \alabel{DA-trace} Given a separable set $X \subset (A_\infty)_+$ and $\gamma > 0$, we have the condition on $a \in (A_\infty)_+$ that
\[ \tau(a^{1/2}xa^{1/2}) \geq \gamma \tau(x) \quad \forall\,x \in X, \tau \in T^1_\infty(A); \]
approximately satisfying this condition means, for a finite subset $\mathcal{F} \subset X$ and a tolerance $\e > 0$,
\[ \tau(a^{1/2}xa^{1/2}) \geq \gamma \tau(x) - \e \quad \forall\,x \in \mathcal{F}, \tau \in T^1_\infty(A); \]
or
\[ \tau(a^{1/2}xa^{1/2}) \geq (\gamma-\e) \tau(x) \quad \forall\,x \in \mathcal{F}, \tau \in T^1_\infty(A). \]

\item \alabel{DA-unit2} Given $e \in (A_\infty)_+$, we have the condition on $a \in A_\infty$ that
\[ ea = a; \]
approximately satisfying this condition means, for a tolerance $\e > 0$,
\[ \|ea-a\| < \e. \]

\item \alabel{DA-ordzero} A linear map $M_k \to A_\infty$ may be encoded by its behaviour on a basis of $M_k$, and therefore we may seek linear maps with certain conditions.
Such a condition is that the map is c.p.c.\ order zero. 
Although we could form an approximate version of this condition, we don't as it is not needed in the sequel.

\item \alabel{DA-pos} A special case of \ref{DA-ordzero} is, for a single element $a$, the condition $a \geq 0$.
\end{enumerate}

\begin{remark*}
We warn that we must be careful not to involve conditions that can't be encoded by a countable set of functions of the form \eqref{DiagArgCondition}, even if we verify them exactly instead of approximately.
The condition $a \in \her{b}$ is an example of a condition which is not allowed, as the following example shows (we generally employ condition \ref{DA-unit2} whenever we wish we could use this condition). 

Let $A$ be a separable $C^*$-algebra which does not have an approximate identity consisting of projections, and let $b \in A_+$ be strictly positive.
We shall show that, even though there exists $a \in \overline{bA_\infty b}_+$ satisfying $ab=b$ approximately (this is an instance of \ref{DA-unit1}), there does not exist $a \in \overline{bA_\infty b}_+$ satisfying $ab=b$ exactly.
Certainly, using an approximate identity, we get $a \in A_+ \subset \overline{bA_\infty b}_+$ satisfying $ab \approx b$.
But, if $a \in \overline{bA_\infty b}_+$ satisfies $ab=b$ then it follows that $aa=a$.
This means that $a$ is a projection, and can therefore be lifted to a projection in $\prod_{n=1}^\infty A$, and since $ab=b$, its lift is an approximate identity, contradicting our hypothesis.
\end{remark*}

Here is a proof of the diagonal sequence argument.
We only state it in the one-variable case, but the $k$-variable (or countably-many variable) case follows from using $A^{\dsum k}$ (respectively $\bigdsum_{n=1}^\infty A$) in place of $A$.

\begin{prop}
Let $(f_n)$ be a sequence of functions of the form \eqref{DiagArgCondition}, and let $R > 0$.
Suppose that we can approximately satisfy these conditions by elements of norm at most $R$, i.e.\ for every $\e > 0$ and every $N$, there exists $a \in A_\infty$ such that $\|a\| \leq R$ and
\[ f_n(a) < \e\quad \forall\, n=1,\dots,N. \]
Then we can exactly satisfy these conditions, i.e.\ there exists $a \in A_\infty$ such that $\|a\| \leq R$ and
\[ f_n(a) = 0 \quad \forall\, n \in \mathbb{N}. \]
\end{prop}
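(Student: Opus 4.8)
The plan is to use a standard diagonal argument. The hypothesis gives us, for each pair $(N, \e)$, an approximating element; the goal is to assemble these into a single element that satisfies every condition exactly. Since each $f_n$ has the form $f_n(\pi_\infty(b_k)) = \limsup_{k\to\infty} f_{n,k}(b_k)$ for some sequence of functions $f_{n,k}$, the value $f_n(a)$ only depends on the tail behaviour of a representing sequence. This is precisely what makes a diagonal extraction work: I can afford to modify finitely many coordinates of the representing sequences freely.

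Let me think about what "approximately satisfy" means. For each $j \in \N$, applying the hypothesis with $N = j$ and $\e = 1/j$ produces an element $a^{(j)} \in A_\infty$ with $\|a^{(j)}\| \le R$ and $f_n(a^{(j)}) < 1/j$ for all $n \le j$. Each $a^{(j)}$ lifts to a bounded sequence $(a^{(j)}_k)_{k}$ in $\prod A$, which I can take to have $\|a^{(j)}_k\| \le R$ for all $k$ (truncating if necessary). So I want to build a single sequence $(c_k)_k$ whose class $a := \pi_\infty(c_k)$ satisfies $f_n(a) = 0$ for every $n$.

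The construction: since $f_n(a^{(j)}) = \limsup_k f_{n,k}(a^{(j)}_k) < 1/j$, for each $j$ I can choose an index $K_j$ (increasing in $j$) so that $f_{n,k}(a^{(j)}_k) < 1/j$ for all $n \le j$ and all $k \ge K_j$. Now define the diagonal sequence by setting $c_k := a^{(j)}_k$ for $K_j \le k < K_{j+1}$. Then $\|c_k\| \le R$ for all $k$, so $\|a\| \le R$. I claim $f_n(a) = 0$ for each fixed $n$: for $k$ in the block $[K_j, K_{j+1})$ with $j \ge n$, we have $f_{n,k}(c_k) = f_{n,k}(a^{(j)}_k) < 1/j$, and as $k \to \infty$ the corresponding $j \to \infty$, so $\limsup_k f_{n,k}(c_k) = 0$, i.e.\ $f_n(a) = 0$.

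\textbf{The main subtlety}, which I would want to state carefully rather than gloss over, is that the definition of $f_n$ via $\limsup$ must be well-defined on $A_\infty$ — that is, independent of the choice of lift — and that the block-splicing producing $(c_k)$ is legitimate. The first point is part of what it means for $f_n$ to have the form \eqref{DiagArgCondition}; since altering finitely many terms of a representing sequence does not change the $\limsup$, the value $f_n(\pi_\infty(\cdot))$ descends to $A_\infty$, and in particular the splicing at the finitely many breakpoints $K_j$ is harmless. The second point is the routine verification above. There is no serious obstacle here; the whole argument is a matter of organizing the countable family of conditions and extracting a diagonal, exactly as in the classical Kaplansky-type density/diagonal arguments for sequence algebras. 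The only thing to watch is ensuring $K_j$ is chosen strictly increasing so that the blocks $[K_j, K_{j+1})$ genuinely exhaust $\N$ and each fixed $n$ eventually sits in blocks with $j \ge n$.
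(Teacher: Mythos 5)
Your proof is correct and is essentially the paper's own argument: approximants $a^{(j)}$ for the pair $(N,\e)=(j,1/j)$, choice of increasing cut-off indices $K_j$ beyond which the coordinatewise functions $f_{n,k}$ are small, and a block-spliced diagonal sequence, with exactly the same well-definedness remark about $\limsup$ depending only on tails. No gaps; the truncation of the lift to norm $\le R$ is a harmless normalization the paper handles by instead allowing $\|a^{(N)}_i\|<R+1/N$ on the tail.
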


\begin{proof}
By hypothesis, let $a_N := \pi_\infty\left((a^{(N)}_i)_{i=1}^\infty\right) \in A_\infty$ satisfy $\|a_N\| \leq R$.
\begin{equation}
\alabel{DiagArgArgument-Proof} f_n(a_N) < 1/N
\end{equation}
for $n=1,\dots,N$.
Express
\[ f_n(\pi_\infty(x_i)) = \limsup_i f^{(n)}_i(x_i) \]
for some functions $f^{(n)}_i$.
Then \eqref{DiagArgArgument-Proof} means that there exists $i_N$ such that
\[ \|a^{(N)}_i\| < R+1/N \]
and
\[ f^{(n)}_i(a^{(N)}_i) < 1/N \]
for all $n=1,\dots,N$ and all $i \geq i_N$.
We may assume that $(i_N)$ is an increasing sequence.
Define
\[ a_i := a^{(N)}_i \]
for $i=i_N,\dots,i_{N+1}-1$, and set
\[ a := \pi_\infty(a_1,a_2,\dots) \in A_\infty. \]
One readily verifies that $\|a\| \leq R$ and $f_n(a) = 0$ for all $n$.
\end{proof}

\subsection{Almost comparison, almost divisibility, and the asymptotic sequence algebra}

Here, we reformulate strong tracial comparison and tracial almost-divisibility in terms of the asymptotic sequence algebra.

\begin{defn}
We will say that $a \in A_\infty$ is \textbf{strongly nonzero} if it lifts to a sequence $(a_i) \in \prod_{n=1}^\infty$ such that $\inf_n \|a_n\| > 0$.
\end{defn}

\begin{lemma}
\alabel{SequenceAlgComparison}
Let $A$ be an algebraically simple, exact $C^*$-algebra.
$A$ has strong tracial $m$-comparison if and only if for any strongly nonzero positive contractions $a,b \in M_\infty(A_\infty)_+$, if for all $\e > 0$,
\[ \inf_{\tau \in T^1_\infty(A)} \left(\frac1{m+1} \tau(b) - \tau(g_{0,\e}(a))\right) > 0, \]
then $[a] \leq [b]$ in $W(A_\infty)$.
\end{lemma}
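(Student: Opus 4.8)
The strategy is to show that the asymptotic-sequence formulation of strong tracial $m$-comparison is equivalent to the definition given earlier, by translating the infimum condition over $T^1_\infty(A)$ back into a sequence condition via Proposition~\ref{SequenceTraces}, and then applying the original definition termwise. The subtlety is that the definition of strong tracial $m$-comparison in Definition~\ref{CompDivDefn} involves a strict inequality $d_\tau(a) < \frac{1}{m+1}\tau(b)$, while the asymptotic reformulation involves $\tau(g_{0,\e}(a))$ in place of the dimension function $d_\tau(a)$; the function $g_{0,\e}$ is a continuous approximation to the support projection, so $\tau(g_{0,\e}(a)) \to d_\tau(a)$ as $\e \to 0$, and the clause ``for all $\e > 0$'' is exactly what bridges these.

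First I would prove the forward direction. Assume $A$ has strong tracial $m$-comparison, and let $a,b \in M_\infty(A_\infty)_+$ be strongly nonzero positive contractions with lifts $(a_n),(b_n)$ satisfying the stated infimum condition for every $\e > 0$. Fixing $\e$, Proposition~\ref{SequenceTraces}\ref{SequenceTraces-inf} turns the condition $\inf_{\tau \in T^1_\infty(A)}\left(\frac{1}{m+1}\tau(b) - \tau(g_{0,\e}(a))\right) > 0$ into
\[ \liminf_{n \to \infty} \inf_{\tau \in T^1(A)} \left(\tfrac{1}{m+1}\tau(b_n) - \tau(g_{0,\e}(a_n))\right) > 0. \]
For large $n$ this gives $\tau(g_{0,\e}(a_n)) < \frac{1}{m+1}\tau(b_n)$ for all $\tau \in T^1(A)$. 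Since $d_\tau((a_n - \e)_+) \leq \tau(g_{0,\e}(a_n))$, the hypothesis of strong tracial $m$-comparison is met by the pair $(a_n - \e)_+$ and $b_n$, yielding $[(a_n-\e)_+] \leq [b_n]$ in $W(A)$. The plan is then to run this for a sequence $\e_k \to 0$ and assemble a Cuntz subequivalence $[a] \leq [b]$ in $W(A_\infty)$ by the usual argument that $a \precsim b$ in $A_\infty$ follows from having, for each $k$, elements $r_n$ with $r_n^* b_n r_n \approx (a_n - \e_k)_+$ for large $n$, combined diagonally; the strongly-nonzero hypothesis ensures these sub-equivalences are nontrivial and $[a]$ is reached as the supremum of the $[(a-\e_k)_+]$.

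For the converse I would assume the asymptotic condition and deduce strong tracial $m$-comparison directly: given nonzero contractions $a,b \in M_\infty(A)_+$ with $d_\tau(a) < \frac{1}{m+1}\tau(b)$ for all $\tau \in T^1(A)$, embed them as the constant sequences $\iota(a), \iota(b) \in M_\infty(A_\infty)_+$, which are strongly nonzero. Since $d_\tau(a) = \lim_{\e \to 0}\tau(g_{0,\e}(a))$ and $a$ is full (by algebraic simplicity), Dini's theorem applied on the compact set $T_{e \mapsto 1}(A)$ (Proposition~\ref{TChoquet}) converts the pointwise strict inequality into the infimum condition of the lemma for each $\e$; the asymptotic condition then gives $[a] \leq [b]$ in $W(A_\infty)$, which descends to $[a] \leq [b]$ in $W(A)$ since $A$ embeds in $A_\infty$.

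The main obstacle I anticipate is the direction of the inequality in handling $g_{0,\e}$: the function $\tau(g_{0,\e}(a))$ \emph{overestimates} $d_\tau((a-\e)_+)$ but \emph{underestimates} $d_\tau(a)$, so one must be careful to route each implication through the correct comparison. In the forward direction one wants $\tau(g_{0,\e}(a_n))$ as an upper bound to apply comparison to $(a_n - \e)_+$; in the converse one wants $\tau(g_{0,\e}(a))$ as a lower bound converging up to $d_\tau(a)$, so that the strict hypothesis $d_\tau(a) < \frac{1}{m+1}\tau(b)$ survives. Pinning down the Dini/compactness argument that produces a uniform gap for each fixed $\e$ — rather than merely pointwise inequalities — is where the real work lies, and Proposition~\ref{TChoquet} together with the linearity noted in the proof of Proposition~\ref{CompDivRelationships} is exactly what makes this uniformization legitimate.
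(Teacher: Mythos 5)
Your overall route coincides with the paper's: the forward direction via Proposition \ref{SequenceTraces} and a termwise application of strong tracial $m$-comparison, the converse via constant sequences and compactness of $T_{e\mapsto 1}(A)$. The converse half of your plan is essentially correct, with one quibble: Dini's theorem is neither needed nor directly applicable (since $d_\tau(a)$ is only lower semicontinuous in $\tau$); for each fixed $\e$ one simply uses that $\tau\mapsto \frac1{m+1}\tau(b)-\tau(g_{0,\e}(a))$ is continuous and pointwise strictly positive on the compact base $T_{e\mapsto 1}(A)$, and then renormalizes to $T^1(A)$ via Proposition \ref{PedAProperties}.

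The genuine gap is in the assembly step of the forward direction. From $[(a_n-\e)_+]\leq[b_n]$ in $W(A)$ you cannot invoke ``the usual argument'' to produce witnesses $r_n$ with $r_n^*b_nr_n\approx(a_n-\e)_+$ that combine diagonally: Cuntz subequivalence only yields witnesses with no uniform bound on $\|r_n\|$, so the sequence $(r_n)$ need not define an element over $A_\infty$; and if you instead take R{\o}rdam's exact witnesses $x_n^*x_n=(a_n-\e)_+$ with $x_nx_n^*\in\her{b_n}$ (which are contractions), the relation $x_nx_n^*\in\her{b_n}$ is exactly the kind of condition that does not survive the passage to $A_\infty$ --- see the Remark at the end of Section \ref{DiagArgSect}. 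The missing idea is to cut $b$ down first: because $r_\e>0$ and $s:=\inf_n\|b_n\|>0$ (this, rather than ``nontriviality,'' is the real role of strong nonzeroness of $b$ here), one may fix $\eta<\min\{r_\e,s\}$ \emph{independent of $n$} and apply strong tracial $m$-comparison to the pair $((a_n-\e)_+,(b_n-\eta)_+)$; the resulting contractions $x_n$ then satisfy $g_{0,\eta}(b_n)x_nx_n^*=x_nx_n^*$ with $\eta$ uniform in $n$, an exact algebraic relation that does pass to the limit and gives $[(a-\e)_+]\leq[b]$ in $W(A_\infty)$. Without this uniform cut-down, the step as you describe it would fail.
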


\begin{proof}
($\Rightarrow$): Suppose that $A$ has strong tracial $m$-comparison and that $a,b \in M_n(A_\infty)_+$ are strongly nonzero positive contractions which satisfy, for all $\e > 0$,
\[ r_\e := \inf_{\tau \in T^1_\infty(A)} \left(\frac1{m+1} \tau(b) - \tau(g_{0,\e}(a))\right) > 0. \]
Let $a = \pi_\infty(a_i)$ and $b = \pi_\infty(b_i)$ for some sequences $(a_i),(b_i) \subset M_n(A)_+$ of positive contractions, such that
\[ s := \inf_i \|b_i\| > 0. \]
Let $\e > 0$.
Pick some
\[ \eta < \min\{r_\e, s\}, \] 
and we see that 
\[ \tau(g_{0,\e}(a)) \leq \frac1{m+1}\tau((b-\eta)_+)\quad \text{for all } \tau \in T^1_\infty(A). \]
By (the proof of) Proposition \ref{SequenceTraces}, for all $i$ sufficiently large, we then have
\[ d_\tau((a_i-\e)_+) \leq \tau(g_{0,\e}(a_i)) < \frac1{m+1} \tau((b_i-\eta)_+) \quad \text{for all } \tau \in T^1_\infty(A), \]
and since $\eta < s$, $(b_i-\eta)_+ \neq 0$.
By strong tracial $m$-comparison, $[(a_i-\e)_+] \leq [(b_i-\eta)_+]$ in $W(A)$.
\ccite{Proposition 2.4}{Rordam:UHFII} shows that there exists $x_i \in M_n(A)$ such that
\[ x_i^*x_i = (a_i-\e)_+ \quad\text{and}\quad x_ix_i^* \in \her{(b_i-\eta)_+}. \]
In particular, $\|x_i\| \leq 1$ and $g_{0,\eta}(b_i)$ acts as a unit on $x_ix_i^*$.

Now, set $x = \pi_\infty(x_i) \in A_\infty$, and note that 
\[ x^*x = (a-\e)_+ \quad\text{and}\quad g_{0,\eta}(b)xx^* = xx^*. \]
Thus, $[(a-\e)_+] \leq [b]$ in $W(A_\infty)$, and since $\e$ is arbitrary, $[a] \leq [b]$.

($\Leftarrow$): Given that $A$ satisfies the latter property in the statement of the lemma, let us show that $A$ has strong tracial $m$-comparison.
Thus, let us take nonzero positive contractions $a,b \in M_\infty(A)_+$ such that
\[ d_\tau(a) < \frac1{m+1} \tau(b) \]
for all $\tau \in T^1(A)$.
Let $\e > 0$, and let us show that
\[ \inf_{\tau \in T^1(A)} \left(\frac1{m+1}\tau(b) - \tau(g_{0,\e}(a))\right) > 0. \]
Certainly, if we fix a strictly positive element $e \in A_+$ then $\tau \mapsto \frac1{m+1}\tau(b) - \tau(g_{0,\e}(a))$ is a continuous function on the compact space $T_{e \mapsto 1}(A)$, and therefore, it has a minimum $r > 0$.
This means that
\[ \frac1{m+1}\tau(b) - \tau(g_{0,\e}(a)) > r\tau(e) \quad\text{for all }\tau \in T(A). \]
Moreover, by Proposition \ref{PedAProperties} \ref{PedAProperties-BddBelow}, there exists $r'> 0$ such that $\tau(e) > r'$ for all $\tau \in T^1(A)$.
Thus,
\[ \frac1{m+1}\tau(b) - \tau(g_{0,\e}(a)) > rr' \quad\text{for all } \tau \in T^1(A). \]
Since $\e$ is arbitrary, we have by hypothesis that $[a] \leq [b]$ in $W(A_\infty)$.
By virtue of the approximative nature of Cuntz comparison, this implies that $[a] \leq [b]$ in $W(A)$.
\end{proof}

\begin{lemma}
\alabel{SequenceAlgDiv}
Suppose that $A$ is an algebraically simple, exact $C^*$-algebra.
Then $A$ is tracially $m$-almost divisible if and only if, for any strongly nonzero positive contractions $d,e \in (A_\infty)_+$ such that $ed=d$, and for any $k \in \N$ there exists a c.p.c.\ order zero map
\[ \psi:M_k \to A_\infty \]
such that $e\psi(x)=\psi(x)$ for all $x \in M_k$, $\psi(1)$ is strongly nonzero, and
\[ \tau(\psi(1)) \geq \frac1{m+1}\tau(d) \quad \text{for all } \tau \in T^1_\infty(A). \]
\end{lemma}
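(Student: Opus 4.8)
The plan is to prove the equivalence by translating the finitary definition of tracial $m$-almost divisibility (Definition \ref{CompDivDefn}(iv), as sharpened by Proposition \ref{CompDivRelationships}\ref{CDR-TrDivEquiv}) into the asymptotic sequence algebra, using the diagonal sequence argument (Section \ref{DiagArgSect}) for the forward direction and a direct evaluation-on-traces argument for the reverse. The core object produced in both directions is a c.p.c.\ order zero map $\psi:M_k \to A_\infty$, which by condition \ref{DA-ordzero} is an admissible thing to construct via diagonalization: it is encoded by its values on a basis of $M_k$, and the order-zero condition is among the allowed exact conditions.

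\emph{Forward direction.} Assume $A$ is tracially $m$-almost divisible. Given strongly nonzero positive contractions $d,e \in (A_\infty)_+$ with $ed=d$, lift them to sequences $(d_i),(e_i)$ of positive contractions with $\inf_i \|d_i\| > 0$ (and, after a standard adjustment, arrange $e_i d_i \approx d_i$). For each fixed approximation tolerance, apply Proposition \ref{CompDivRelationships}\ref{CDR-TrDivEquiv} to $d_i \in M_\infty(A)_+$ to obtain a nonzero c.p.c.\ order zero map $\psi_i:M_k \to \her{d_i}$ with $\tau(\psi_i(1_k)) \geq \frac{1-\e}{m+1}\tau(d_i)$ for all $\tau \in T(A)$; since the image lies in $\her{d_i}$, one can further multiply by a function of $e_i$ to arrange that $e_i$ acts approximately as a unit on the image. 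Packaging these into an admissible list of conditions --- the exact order-zero condition \ref{DA-ordzero}, the unit condition \ref{DA-unit2} that $e\psi(x)=\psi(x)$, a strong-nonzeroness condition forcing $\inf_i\|\psi_i(1_k)\|>0$, and the tracial lower bound phrased via Proposition \ref{SequenceTraces}\ref{SequenceTraces-inf} --- the diagonal sequence argument yields a single $\psi:M_k \to A_\infty$ satisfying all of them exactly. The tracial inequality $\tau(\psi(1)) \geq \frac{1}{m+1}\tau(d)$ for all $\tau \in T^1_\infty(A)$ follows from Proposition \ref{SequenceTraces} applied to $\frac{1}{m+1}d_i - \psi_i(1_k)$, absorbing the $\e$ into the limit.

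\emph{Reverse direction.} Conversely, suppose the sequence-algebra condition holds; I would verify the criterion of Proposition \ref{CompDivRelationships}\ref{CDR-TrDivEquiv}. Given a positive contraction $b \in M_\infty(A)_+$, $k \in \N$, and $\e > 0$, set $d := \iota(b)$ and take $e := \iota(e_0)$ where $e_0$ is a strictly positive element of $\her{b}$ acting approximately as a unit on $b$ (so $ed=d$ and both are strongly nonzero). The hypothesis provides $\psi:M_k \to A_\infty$ with $e\psi=\psi$, $\psi(1)$ strongly nonzero, and $\tau(\psi(1)) \geq \frac1{m+1}\tau(d)$ for all $\tau \in T^1_\infty(A)$; since $e\psi=\psi$ and $e$ acts (in the limit) within $\her{b}$, the image of $\psi$ lies in $\her{d} = \overline{\iota(b)A_\infty\iota(b)}$. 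Lifting $\psi$ to a sequence of c.p.c.\ order zero maps $\psi_i:M_k \to \her{b}$, the strong nonzeroness of $\psi(1)$ guarantees that $\psi_i$ is nonzero for all large $i$, and Proposition \ref{SequenceTraces}\ref{SequenceTraces-inf} converts the sequence-algebra tracial bound back into $\tau(\psi_i(1_k)) \geq \frac{1-\e}{m+1}\tau(b)$ for all $\tau \in T^1(A)$ and all large $i$; choosing one such $i$ gives the required map.

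\emph{Main obstacle.} The delicate point is ensuring the constructed order-zero maps genuinely land in the right hereditary subalgebra while remaining strongly nonzero and tracially large --- the Remark following Section \ref{DiagArgSect} warns that ``$a \in \her{b}$'' is not itself an admissible diagonalization condition, so I must route this requirement through the unit condition $e\psi(x)=\psi(x)$ of \ref{DA-unit2} rather than through a membership condition, and then recover actual containment in $\her{d}$ from the exact relation $e\psi=\psi$ on the limit. Coordinating the order-zero structure (via Proposition \ref{OrderZeroStructure}) with this approximate-unit bookkeeping, so that multiplying by functions of $e_i$ preserves the c.p.c.\ order zero property up to the required tolerance, is where the real care is needed; the tracial estimates themselves are routine once Proposition \ref{SequenceTraces} is in hand.
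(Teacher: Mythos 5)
Your forward direction is essentially the paper's: a direct application of the diagonal sequence argument, with the approximate unit condition routed through \ref{DA-unit2} and the tracial bound through \ref{DA-trace}. The only looseness there is that ``multiplying by a function of $e_i$'' can destroy the order zero property; the cleaner move is to apply tracial divisibility to $(d_i-\eta)_+$, so that $g_{0,\eta}(d_i)$ --- and hence, approximately, $e_i$ --- acts as an exact unit on the image of the resulting map.

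The reverse direction has a genuine gap. You set $d:=\iota(b)$ and $e:=\iota(e_0)$ with $e_0\in\her{b}_+$ ``acting approximately as a unit on $b$'' and assert that $ed=d$. It does not: $\iota(e_0)\iota(b)=\iota(e_0b)$, and $e_0b\approx b$ is not $e_0b=b$; in general no element of $\her{b}$ acts as an exact unit on $b$ (take $A=C_0((0,1])$ and $b=\id_{(0,1]}$). This is precisely the pitfall flagged in the remark at the end of Section \ref{DiagArgSect}, which you cite in your ``main obstacle'' paragraph but then run into here. Consequently the pair $(d,e)$ you test does not satisfy the hypothesis of the sequence-algebra condition, and the argument never gets off the ground. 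The paper's fix is to shrink: take $d:=\iota\left((b-\e/2)_+\right)$ and $e:=\iota\left(g_{0,\e/2}(b)\right)$, which do satisfy $ed=d$ exactly and are strongly nonzero, at the cost of an $\e/2$ in the tracial estimate --- a loss the definition of tracial $m$-almost divisibility explicitly permits. The hypothesis then yields $\psi$ with $g_{0,\e/2}(b)\psi(x)=\psi(x)$ exactly, and stability of c.p.c.\ order zero maps (\ccite{Proposition 1.2.4}{Winter:CovDim2}) lifts $\psi$ to order zero maps $\psi_i:M_k\to A$ satisfying the same exact unit relation, which is what forces $\psi_i(M_k)\subseteq\her{b}$; Proposition \ref{SequenceTraces} then converts the tracial bound back to $T^1(A)$ for large $i$. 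Your lifting step and the use of strong nonzeroness to get $\psi_i\neq 0$ for large $i$ are fine once the choice of $(d,e)$ is repaired.
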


\begin{proof}
The forward implication is a direct application of the diagonal sequence argument from Section \ref{DiagArgSect}.

Conversely, suppose that $A$ satisfies the latter property and that $b \in A_+$ is a positive contraction, $k \in \mathbb{N}$, and $\e > 0$ are given as in the definition of tracially $m$-almost divisible.
By hypothesis, we may find a c.p.c.\ order zero map $\psi:M_k \to A_\infty$ such that
\[ g_{0,\e/2}(b)\psi(x) = \psi(x) \]
for all $x \in M_k$ and
\[ \tau(\psi(1)) \geq \frac1{m+1}\tau((b-\e/2)_+ \quad \text{for all } \tau \in T^1_\infty(A). \]
In particular, $\tau(\psi(1)) > \frac1{m+1}\tau(b) - \e$ for all $\tau \in T^1_\infty(A)$.

By stability of c.p.c.\ order zero maps \ccite{Proposition 1.2.4}{Winter:CovDim2}, we may lift $\psi$ to a sequence of c.p.c.\ order zero maps $\psi_i:M_k \to A$ satisfying
\[ g_{0,\e/2}(b)\psi_i(x) = \psi_i(x) \quad \text{for all }x \in M_k,\]
and therefore, $\psi_i(M_k) \subseteq \her{b}$.
Then by (the proof of) Proposition \ref{SequenceTraces} \ref{SequenceTraces-inf}, for $i$ sufficiently large, we have $\tau(\psi_i(1)) > \frac1{m+1}\tau(b) - \e$ for all $\tau \in T^1(A)$, as required.
\end{proof}

\section{Characterizing $\jsZ$-stability}
\alabel{ZSec}

The Jiang-Su algebra $\jsZ$ was originally defined in \cite{JiangSu}; we will recall its characterization in Theorem \ref{JiangSuCharacterization}.
It is a strongly self-absorbing $C^*$-algebra in the sense of \ccite{Definition 1.3 (iv)}{TomsWinter:ssa}.

The literature contains a few characterizations of $\mathcal{Z}$-stability involving suitable embeddings into asymptotic sequence algebras.
To this list, we shall add related characterizations, Propositions \ref{DStabilityCharacterization} and \ref{LimitDStabilityCharacterization}, which will be useful in the sequel for proving $\jsZ$-stability for classes of nonunital $C^*$-algebras.

For a $C^*$-algebra $A$, let us define
\[ A^\perp = \{b \in A_\infty: bA = Ab = 0\}. \]

\begin{prop}
\alabel{AnnQuotientUnital}
Let $A$ be a $\sigma$-unital $C^*$-algebra.
Then $A^\perp$ is an ideal of $A_\infty \cap A'$ and $(A_\infty \cap A')/A^\perp$ is unital.
\end{prop}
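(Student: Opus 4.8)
The plan is to exhibit an explicit unit for the quotient. Since $A$ is $\sigma$-unital, I would fix a sequential approximate unit $(u_n)$ for $A$ consisting of positive contractions and set $u := \pi_\infty(u_n) \in A_\infty$. The key observation is that, because $(u_n)$ is a two-sided approximate unit, $\|u_n a - a\| \to 0$ and $\|a u_n - a\| \to 0$ for every $a \in A$, and hence $u\iota(a) = \iota(a) = \iota(a)u$ holds \emph{exactly} in $A_\infty$. In particular $u$ commutes with $\iota(A)$, so $u \in A_\infty \cap A'$. Note that no quasicentrality needs to be invoked here: approximate commutation of $(u_n)$ with $A$ is automatic, since $\|u_n a - a u_n\| \leq \|u_n a - a\| + \|a - a u_n\| \to 0$.

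Next I would verify that $A^\perp$ is a closed two-sided ideal of $A_\infty \cap A'$. That $A^\perp$ is a closed $*$-subalgebra of $A_\infty$ is immediate, being the set of $b$ annihilated on both sides by the $*$-subalgebra $\iota(A)$; and $A^\perp \subseteq A_\infty \cap A'$, since an element annihilating $\iota(A)$ on both sides in particular commutes with $\iota(A)$. The point to stress is that $A^\perp$ is \emph{not} an ideal of all of $A_\infty$, but it \emph{is} one of the relative commutant: for $c \in A_\infty \cap A'$ and $b \in A^\perp$, using $c\iota(a) = \iota(a)c$ one computes $\iota(a)(cb) = c(\iota(a)b) = 0$ and $(cb)\iota(a) = c(b\iota(a)) = 0$, so $cb \in A^\perp$, and symmetrically $bc \in A^\perp$. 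Thus $(A_\infty \cap A')/A^\perp$ is a $C^*$-algebra.

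Finally I would check that the class $[u]$ is a two-sided unit for this quotient, i.e. that $ub - b$ and $bu - b$ lie in $A^\perp$ for every $b \in A_\infty \cap A'$. Using $u\iota(a) = \iota(a) = \iota(a)u$ together with $b\iota(a) = \iota(a)b$, one computes directly $(ub - b)\iota(a) = (u\iota(a) - \iota(a))b = 0$ and $\iota(a)(ub - b) = \iota(a)b - \iota(a)b = 0$, and similarly for $bu - b$; hence both differences annihilate $\iota(A)$ on both sides and so belong to $A^\perp$. A two-sided unit makes the quotient $C^*$-algebra unital, finishing the argument. I do not expect a genuine difficulty here; the only point requiring care is recognizing that $A^\perp$ fails to be an ideal of $A_\infty$ itself and that commutation with $\iota(A)$ is exactly what rescues both the ideal property and the unit computation, so the passage to the relative commutant $A_\infty \cap A'$ is essential and should be flagged explicitly.
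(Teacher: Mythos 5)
Your proof is correct and follows essentially the same route as the paper, which also takes an approximate unit $(e_n)\subset A_+$ and observes that its class in $(A_\infty\cap A')/A^\perp$ is the unit (citing the proof of Proposition 1.9(3) of Kirchberg's central sequences paper for this); the ideal verification is the same ``easy calculation'' the paper leaves to the reader. Your explicit remarks --- that $u\iota(a)=\iota(a)=\iota(a)u$ holds exactly, that $A^\perp$ is an ideal only of the relative commutant and not of $A_\infty$, and that no quasicentrality is needed --- are all accurate and simply spell out what the paper compresses.
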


\begin{proof}
That $A^\perp$ is an ideal is an easy calculation.
The proof of \ccite{Proposition 1.9 (3)}{Kirchberg:CentralSequences} shows that $(A_\infty \cap A')/A^\perp$ is unital --- namely, if $(e_n) \subset A_+$ is an approximate unit of $A$ then its class in $(A_\infty \cap A')/A^\perp$ is the unit.
\end{proof}

\begin{prop} (cf.\ \ccite{Proposition 4.11}{Kirchberg:CentralSequences})
\alabel{DStabilityCharacterization}
Let $A$ be a separable $C^*$-algebra and let $\mathcal{D}$ be strongly self-absorbing.
Then $A$ is $\mathcal{D}$-stable if and only if there exists a unital $*$-homomorphism
\[ \mathcal{D} \to (A_\infty \cap A')/A^\perp. \]
\end{prop}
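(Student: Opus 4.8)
The statement is an equivalence, and I would prove the two implications by quite different means. The forward direction (if $A$ is $\mathcal{D}$-stable then the unital $*$-homomorphism exists) is a direct construction inside $A \otimes \mathcal{D}$, while the reverse direction is the substantial one and would be carried out by an Elliott-style approximate intertwining. The unitality of the target $(A_\infty \cap A')/A^\perp$, guaranteed by Proposition \ref{AnnQuotientUnital}, is what makes the nonunital statement even sensible, and keeping track of it modulo $A^\perp$ is the recurring difficulty throughout.

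For the forward direction, I would first record the standard fact that a strongly self-absorbing $\mathcal{D}$ admits a unital $*$-homomorphism $\gamma \colon \mathcal{D} \to \mathcal{D}_\infty \cap \mathcal{D}'$ (immediate from the defining property that the first-factor embedding $\mathcal{D} \to \mathcal{D} \otimes \mathcal{D}$ is approximately unitarily equivalent to an isomorphism; see \cite{TomsWinter:ssa}). Lift $\gamma$ to a sequence of c.p.c.\ maps $\gamma_n \colon \mathcal{D} \to \mathcal{D}$ that is asymptotically multiplicative, asymptotically central in $\mathcal{D}$, and asymptotically unital. Using a $\mathcal{D}$-stability isomorphism $A \cong A \otimes \mathcal{D}$ and a two-sided approximate unit $(e_n)$ of $A$, I would define $\psi_n \colon \mathcal{D} \to A \otimes \mathcal{D}$ by $\psi_n(d) = e_n \otimes \gamma_n(d)$ and check that $d \mapsto \pi_\infty(\psi_n(d))$ induces a unital $*$-homomorphism $\mathcal{D} \to ((A\otimes\mathcal{D})_\infty \cap (A\otimes\mathcal{D})')/(A\otimes\mathcal{D})^\perp$. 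The only non-formal point is multiplicativity: the defect $\psi_n(c)\psi_n(d) - \psi_n(cd)$ splits as $e_n^2 \otimes (\gamma_n(c)\gamma_n(d) - \gamma_n(cd))$, whose norm tends to $0$, plus $(e_n^2 - e_n) \otimes \gamma_n(cd)$, which annihilates every elementary tensor $a \otimes x$ (since $(e_n^2 - e_n)a \to 0$) and hence lands in $(A\otimes\mathcal{D})^\perp$. Approximate centrality comes from the centrality of $\gamma$ together with $e_n a, a e_n \to a$, and unitality from the fact that $e_n \otimes 1$ acts as an approximate unit. Transporting along $A \cong A \otimes \mathcal{D}$ yields the desired homomorphism.

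For the reverse direction, given a unital $*$-homomorphism $\Gamma \colon \mathcal{D} \to (A_\infty \cap A')/A^\perp$, I would first lift it, using the Choi--Effros lifting theorem for the nuclear algebra $\mathcal{D}$, to a sequence of c.p.c.\ maps $\phi_n \colon \mathcal{D} \to A$ that are asymptotically central in $A$, asymptotically multiplicative modulo $A^\perp$, and with $\phi_n(1)$ acting as an approximate unit on finite subsets (encoding unitality modulo $A^\perp$). The plan is then to build an approximate intertwining between $A$ and $A \otimes \mathcal{D}$: the maps $a \otimes d \mapsto \iota(a)\,\Gamma(d)$ assemble, by approximate centrality, into a $*$-homomorphism $A \otimes \mathcal{D} \to A_\infty/A^\perp$, which after reindexing via the diagonal sequence argument of Section \ref{DiagArgSect} produces approximate morphisms $A \otimes \mathcal{D} \to A$ that are approximately unital and approximately compatible with the first-factor embedding $A \to A \otimes \mathcal{D}$. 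The strong self-absorption of $\mathcal{D}$ supplies the approximate inner flip needed to iterate these maps, and Elliott's approximate intertwining then upgrades them to an isomorphism $A \cong A \otimes \mathcal{D}$.

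The main obstacle is the reverse direction, and specifically the nonunitality: because $\phi_n(1)$ is only an approximate unit modulo $A^\perp$ rather than a genuine unit, every step of the intertwining must be localized to finite subsets of $A$ and trimmed by the approximate unit $(e_n)$, and one must arrange the lifts $\phi_n$ so that asymptotic centrality, asymptotic multiplicativity, and the approximate-unit behaviour of $\phi_n(1)$ all hold simultaneously along a single sequence. I expect the reconciliation of these three approximate conditions --- rather than any single hard estimate --- to be where the real work lies; the analogous unital statement (cf.\ \cite{Kirchberg:CentralSequences}) serves as the template, with $A^\perp$ systematically absorbing the error terms that a unit would otherwise kill.
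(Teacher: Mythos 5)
Your forward direction is essentially the paper's: both define $\phi_n(d) = e_n \otimes \psi_n(d)$ for an approximate unit $(e_n)$ of $A$ and an approximately central sequence $\psi_n$ on $\mathcal{D}$, and both observe that the multiplicativity defect splits into a term vanishing in norm and a term of the form $(e_n^2 - e_n)a \otimes (\cdots)$ that dies in $(A \otimes \mathcal{D})^\perp$. (The paper takes the $\psi_n$ to be genuine unital $*$-homomorphisms rather than c.p.c.\ lifts of a map into $\mathcal{D}_\infty \cap \mathcal{D}'$, which removes one source of error, but this is cosmetic.) The reverse direction is where you diverge: the paper does \emph{not} run an Elliott intertwining. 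Instead it defines $\psi \colon A \otimes \mathcal{D} \to A_\infty$ by $\psi(a \otimes d) := ax$ where $x \in A_\infty \cap A'$ is \emph{any} lift of $\Gamma(d)$ --- the point being that $aA^\perp = 0$ makes this independent of the lift, so the map lands in $A_\infty$ itself rather than in $A_\infty/A^\perp$, with no Choi--Effros lifting needed --- and then notes $\psi(a \otimes 1) = a$ and invokes \ccite{Theorem 2.3}{TomsWinter:ssa} (with $K_1$-injectivity supplied by \ccite{Remark 3.3}{Winter:Zinitial}) to conclude $\mathcal{D}$-stability. What you propose is, in effect, to reprove that cited theorem by hand: lifting $\Gamma$, reconciling approximate centrality, multiplicativity, and unitality along one sequence, and iterating via the approximately inner flip. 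That would work (it is how the Toms--Winter result is itself proved), but it is substantially more labour, and the difficulties you flag about $\phi_n(1)$ being only an approximate unit modulo $A^\perp$ are exactly the ones the paper's well-definedness trick sidesteps: by multiplying the lift by $a$ before ever leaving $A_\infty$, the annihilator ambiguity is killed at the level of elements rather than being carried through an intertwining. If you do pursue your route, you should also say a word about why $\mathcal{D}$ is nuclear (so that the commuting pair $\iota(A)$ and $\Gamma(\mathcal{D})$ induces a map on the minimal tensor product) and where the unitaries implementing the approximate flip live in the nonunital setting.
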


\begin{proof}
To show the forward implication, we shall describe a unital $*$-homomorphism
\[ \mathcal{D} \to ((A \tens \mathcal{D})_\infty \cap (A \tens \mathcal{D})')/(A \tens \mathcal{D})^\perp \]
for any separable $C^*$-algebra $A$.

Let $(e_n) \subset A_+$ be an approximate unit and let $\psi_n:\mathcal{D} \to \mathcal{D}$ be an approximately central sequence of unital $*$-homomorphisms.

Define a c.p.c.\ map 
\[ \phi_n := e_n \tens \psi_n(\cdot):\mathcal{D} \to A \tens \mathcal{D}. \]
For $a \in A$ and $d_1,d_2 \in \mathcal{D}$,
\[ \phi_n(d_1)(a \tens d_2) = (e_na) \tens (\psi(d_1)d_2). \]
It is clear from this that $\phi := \pi_\infty \circ (\phi_1,\phi_2,\dots):\mathcal{D} \to (A \otimes \mathcal{D})_\infty$ has image contained in the commutant of $A \otimes \mathcal{D}$.
Moreover, for $d_1,d_2,d_3 \in \mathcal{D}$ and $a \in A$,
\begin{align*}
 \phi_n(d_1)\phi_n(d_2)(a \tens d_3) - \phi_n(d_1d_2)(a \tens d_3) &= (e_n^2a) \tens (\psi_n(d_1)\psi_n(d_2)d_3) \\
&\qquad - (e_na) \tens (\psi_n(d_1d_2)d_3) \\
&= e_n(e_na-a) \tens \psi_n(d_1d_2)d_3 \\
&\to 0
\end{align*}
as $n \to \infty$.
Consequently, we see that $\phi$ in fact induces a $*$-homomorphism
\[ \mathcal{D} \to ((A \tens \mathcal{D})_\infty \cap (A \tens \mathcal{D})')/(A \tens \mathcal{D})^\perp; \]
it is straightforward (from the description of the unit in the proof of Proposition \ref{AnnQuotientUnital}) that this map is unital.

Conversely, given a unital $*$-homomorphism 
\[ \phi:\mathcal{D} \to (A_\infty \cap A')/A^\perp, \]
we may define a map $\psi:A \tens \mathcal{D} \to A_\infty$ by
\begin{equation}
 \psi(a \tens d) := ax 
\alabel{DStabilityCharacterization-PsiDef}
\end{equation}
where $x \in A_\infty \cap A'$ is a lift of $\phi(d)$.
Note that the definition of $A^\perp$ ensures that the right-hand side of \eqref{DStabilityCharacterization-PsiDef} is independent of the choice of $x$, i.e.\ that $\psi$ is well-defined.
Moreover, we see that since $\phi$ is unital, $\psi(a \tens 1) = a$ for any $a \in A$.
Thus, \ccite{Theorem 2.3}{TomsWinter:ssa} shows that $A$ is $\mathcal{D}$-stable (the hypothesis in \ccite{Theorem 2.3}{TomsWinter:ssa} that $\mathcal{D}$ is $K_1$-injective is automatically satisfied by \ccite{Remark 3.3}{Winter:Zinitial}).
\end{proof}

Related to the previous characterization of $\mathcal{D}$-stability is the following characterization, in the case that $\mathcal{D}$ is an inductive limit.
(We have in mind $\jsZ$ as a limit of dimension drop algebras; see Theorem \ref{JiangSuCharacterization} and the following comments below.)

\begin{prop}
\alabel{LimitDStabilityCharacterization}
Suppose that $\mathcal{D}$ is a strongly self-absorbing $C^*$-algebra that can be expressed as 
\[\textstyle{ \mathcal{D} = \overline{\bigcup_{i=1}^\infty \mathcal{D}_i}}, \]
where $\mathcal{D}_i$ is a unital nuclear subalgebra for each $i$, and the sequence $(\mathcal{D}_i)$ is increasing.
For a separable $C^*$-algebra $A$, $A$ is $\mathcal{D}$-stable if and only if, for every $i$ there exists a unital $*$-homomorphism
\[ \mathcal{D}_i \to (A_\infty \cap A')/A^\perp. \]
\end{prop}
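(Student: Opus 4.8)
The plan is to route both directions through Proposition~\ref{DStabilityCharacterization}, reducing everything to the single unital $C^*$-algebra $B := (A_\infty \cap A')/A^\perp$ (unital by Proposition~\ref{AnnQuotientUnital}). The forward direction is immediate: if $A$ is $\mathcal{D}$-stable, Proposition~\ref{DStabilityCharacterization} provides a unital $*$-homomorphism $\Phi\colon \mathcal{D}\to B$, and since each $\mathcal{D}_i$ is a unital subalgebra (so $1_{\mathcal{D}_i}=1_{\mathcal{D}}$), the restriction $\Phi|_{\mathcal{D}_i}\colon \mathcal{D}_i\to B$ is again unital. The content is the converse, for which it suffices to manufacture a single unital $*$-homomorphism $\mathcal{D}\to B$ from the uncoordinated family $\phi_i\colon \mathcal{D}_i\to B$, and then apply Proposition~\ref{DStabilityCharacterization} once more.

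I would build this map by a diagonal sequence argument (Section~\ref{DiagArgSect}) inside $A_\infty$. Fix a countable dense $*$-subalgebra $\mathcal{D}_0=\{d_m\}_{m\geq1}\subseteq \mathcal{D}$ over $\mathbb{Q}+i\mathbb{Q}$, and a strictly positive element $e\in A_+$. The goal is to produce elements $x_m\in A_\infty\cap A'$ satisfying \emph{exactly} the complete list of conditions saying that $d_m\mapsto[x_m]\in B$ is a contractive unital $*$-homomorphism on $\mathcal{D}_0$: encoding the operations of $\mathcal{D}_0$ by index functions $\mu,\sigma,\rho$, one asks that
\[ x_jx_k - x_{\mu(j,k)},\quad x_j+x_k-x_{\sigma(j,k)},\quad \lambda x_j - x_{\rho(\lambda,j)},\quad x_j^*-x_{\bar\jmath}\ \in\ A^\perp; \]
together with the norm bounds $\|x_m\|\leq\|d_m\|$, commutation $[x_m,\iota(a)]=0$ for $a\in A$, and unitality $x_{1_{\mathcal{D}}}\iota(e)=\iota(e)=\iota(e)x_{1_{\mathcal{D}}}$. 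If such $x_m$ exist, then $d_m\mapsto[x_m]$ is a contractive $*$-homomorphism on $\mathcal{D}_0$ (contractivity on \emph{every} generator gives contractivity on $\mathcal{D}_0$), hence extends to the desired unital $*$-homomorphism $\mathcal{D}\to B$.

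The step I expect to be the main obstacle is verifying that these are all \emph{admissible} conditions in the sense of Section~\ref{DiagArgSect}, the novelty being the passage to the quotient by $A^\perp$. Commutation is condition~\ref{DA-comm} for the separable set $\iota(A)$, while the norm bounds and unitality are of the forms~\ref{DA-unit2} and~\eqref{DiagArgCondition}. The key observation is that membership in $A^\perp$ is itself admissible: since $e$ is strictly positive, $\overline{eA}=\overline{Ae}=A$, and a short computation gives, for $z\in A_\infty$,
\[ z\in A^\perp \iff \iota(e)z=z\iota(e)=0, \]
so each relation ``$z\in A^\perp$'' becomes the pair $\|\iota(e)z\|=\|z\iota(e)\|=0$ of the form~\eqref{DiagArgCondition}. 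There are only countably many conditions, and they bound $\|x_m\|$, so the diagonal sequence argument applies provided these conditions are approximately solvable.

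Approximate solvability is then routine and is where the hypotheses enter. Given finitely many conditions, say involving $d_1,\dots,d_N$, choose $i$ so large that each $d_m$ lies within the prescribed tolerance of $\mathcal{D}_i$ (possible since $\mathcal{D}=\overline{\bigcup_i\mathcal{D}_i}$ with the $\mathcal{D}_i$ increasing), pick approximants $d_m'\in\mathcal{D}_i$, and lift $\phi_i(d_m')\in B$ to some $x_m\in A_\infty\cap A'$ of nearly minimal norm. As $\phi_i$ is a unital $*$-homomorphism, the algebraic relations, the norm bounds, and unitality hold \emph{exactly} in $B$ for the $d_m'$, and commutation with $A$ holds exactly for the lifts; the only error is the replacement of $d_m'$ by $d_m$, which can be made arbitrarily small. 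Finally, smallness of a defect $z$ in $B$ feeds back into the admissible $A^\perp$-conditions via $\|\iota(e)z\|\leq\|[z]\|_B$ (using $\iota(e)w=0$ for $w\in A^\perp$), so all listed conditions are approximately satisfied. The diagonal sequence argument then yields exact solutions $(x_m)$, completing the construction.
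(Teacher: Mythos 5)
Your proposal is correct, and while its outer skeleton matches the paper's (both directions are reduced to Proposition \ref{DStabilityCharacterization}, with the forward direction by restriction), the heart of the converse is handled by a genuinely different mechanism. The paper must produce, for each finite $\mathcal{F}\subset\mathcal{D}$ and tolerance, a u.c.p.\ map defined on \emph{all} of $\mathcal{D}$ that is approximately multiplicative on $\mathcal{F}$; since $(A_\infty\cap A')/A^\perp$ is not injective, it obtains such a globally defined map by using nuclearity of $\mathcal{D}_{i_0}$ to approximate $\phi_{i_0}$ by a u.c.p.\ map factoring through a finite-dimensional algebra and then invoking the Arveson extension theorem. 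You sidestep the extension problem entirely: by encoding the $*$-homomorphism relations on a countable dense $\mathbb{Q}+i\mathbb{Q}$-$*$-subalgebra as countably many admissible conditions --- and your key observation that membership in $A^\perp$ is itself admissible, via $z\in A^\perp \Leftrightarrow \iota(e)z=z\iota(e)=0$ for $e$ strictly positive together with $\|\iota(e)z\|\leq\|[z]\|$, is correct and is exactly what makes this work --- you only ever need the exactly multiplicative maps $\phi_i$ on their own domains, plus nearly norm-preserving lifts along $A_\infty\cap A'\to(A_\infty\cap A')/A^\perp$. What your route buys is that the nuclearity hypothesis on the $\mathcal{D}_i$ is never used, so your argument proves the statement for any increasing union of unital subalgebras, whereas the paper's route needs nuclearity essentially; the cost is the bookkeeping of index functions and the countably-many-variables form of the diagonal sequence argument. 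Two points worth making explicit in a write-up: first, include $1_{\mathcal{D}}$ in $\mathcal{D}_0$ and note that $\phi_i(1_{\mathcal{D}})=1_B=[\pi_\infty(e_n)]$ for a quasicentral approximate unit $(e_n)$, so that the unitality condition $x_{1_{\mathcal{D}}}\iota(e)=\iota(e)$ is in fact exactly satisfied by your lifts; second, verify that this condition really forces $[x_{1_{\mathcal{D}}}]=1_B$, which it does because for $z\in A_\infty\cap A'$ and $a\in A$ one computes $(x_{1_{\mathcal{D}}}z-z)\iota(a)=x_{1_{\mathcal{D}}}\iota(a)z-\iota(a)z=0$ and similarly on the other side, so the defect lies in $A^\perp$.
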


\begin{proof}
Appealing to Proposition \ref{DStabilityCharacterization}, we will show that the latter condition is equivalent to the existence of a unital $*$-homomorphism
\[ \mathcal{D} \to (A_\infty \cap A')/A^\perp. \]
The reverse implication is obvious.

Conversely, suppose that for each $i$,
\[ \phi_i: \mathcal{D}_i \to (A_\infty \cap A')/A^\perp \]
is a $*$-homomorphism.
It suffices, by a diagonalization argument (see Section \ref{DiagArgSect}), to find u.c.p.\ maps 
\[ \mathcal{D} \to (A_\infty \cap A')/A^\perp \]
which are (point-norm-)approximately multiplicative.
Therefore, let $\mathcal{F} \subset \mathcal{D}$ be a finite subset and $\e > 0$ be our tolerance.
By density, without loss of generality, $\mathcal{F} \subset \bigcup_i \mathcal{D}_i$, which is to say that $\mathcal{F} \subset \mathcal{D}_{i_0}$ for some $i_0$.

Since $\mathcal{D}_{i_0}$ is nuclear, so is the map $\phi_{i_0}$, and therefore, there exists a u.c.p.\ map $\psi$ which factors through a finite dimensional algebra, and which approximates $\phi_{i_0}$ sufficiently well so that
\begin{equation}
 \psi(x)\psi(y) \approx_\e \psi(xy)
\alabel{LimitDStabilityCharacterization-ApproxMult}
\end{equation}
for all $x,y \in \mathcal{F}$.
Since $\psi$ factors through a finite dimensional algebra, the Arveson Extension Theorem implies that it extends to a u.c.p.\ map (which will also be denoted $\psi$)
\[ \mathcal{D} \to (A_\infty \cap A')/A^\perp. \]
This map is sufficiently approximately multiplicative, by \eqref{LimitDStabilityCharacterization-ApproxMult}.
\end{proof}

We shall now recall what the Jiang-Su algebra is.

\begin{defn}
For $p,q \in \N$, we denote
\[ \jsZ_{p,q} := \{ f \in C([0,1], M_p \tens M_q): f(0) \in M_p \tens 1_q \text{ and } f(1) \in 1_p \tens M_q\}. \]
Such a $C^*$-algebra is called a \textbf{dimension drop algebra}.
\end{defn}

\begin{thm}\cite{JiangSu}
\alabel{JiangSuCharacterization}
$\jsZ$ is (up to isomorphism) the unique simple inductive limit of dimension drop algebras which has a unique trace and whose only projections are $0$ and $1$.
In fact, $\jsZ$ is an inductive limit of dimension drop algebras of the form $\jsZ_{p-1,p}$.
\end{thm}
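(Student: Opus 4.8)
The final statement is Theorem \ref{JiangSuCharacterization}, Jiang and Su's characterization of $\jsZ$: that it is the unique simple inductive limit of dimension drop algebras having a unique trace and only trivial projections, and moreover can be realized as a limit of prime dimension drop algebras $\jsZ_{p-1,p}$. This is of course a deep theorem from \cite{JiangSu} quoted here as background rather than proved by the author, but I will sketch how one proves it.

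**The plan.** The result has two logically distinct parts: an existence/construction part and a uniqueness (classification) part. The plan is to treat them separately. For existence, I would construct an explicit inductive system $\jsZ_{p_1-1,p_1} \to \jsZ_{p_2-1,p_2} \to \cdots$ with carefully chosen connecting maps and multiplicities, and verify that the limit is simple, monotracial, and projectionless (beyond $0,1$). For uniqueness, I would invoke an Elliott-type intertwining argument: any two inductive limits of dimension drop algebras with the stated $K$-theoretic and tracial data have isomorphic Elliott invariants, and approximate morphisms respecting the invariant can be intertwined to an isomorphism.

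**Key steps for existence.** First I would set up the connecting homomorphisms $\phi_i : \jsZ_{p_i-1,p_i} \to \jsZ_{p_{i+1}-1,p_{i+1}}$ as point-evaluation-type maps of the form $f \mapsto \mathrm{diag}(f\circ \xi_1, \dots, f\circ \xi_k)$ composed with a unitary, where the $\xi_j:[0,1]\to[0,1]$ are continuous paths chosen so that the boundary conditions at $0$ and $1$ are respected and so that the maps are unital. Second, I would arrange the paths $\xi_j$ to be sufficiently ``spread out'' across $[0,1]$ (the standard Jiang--Su trick) that the images separate points of the base interval in the limit; this forces simplicity of the limit, since any nonzero ideal must then be everything. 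Third, I would choose the multiplicities $k_i$ and dimensions $p_i$ growing so that the induced maps on traces converge to a single point, giving a unique trace in the limit. Fourth, projectionlessness follows because each $\jsZ_{p-1,p}$ has only trivial projections (connectedness of $[0,1]$ together with the dimension-drop boundary conditions, which force the rank to be constant yet compatible with coprime matrix sizes $p-1,p$ at the endpoints), and this passes to the limit. The coprimality of $p-1$ and $p$ is what kills nontrivial projections and also pins down $K_0 = \mathbb{Z}$ with trivial ordering.

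**Key steps for uniqueness, and the main obstacle.** For uniqueness I would first compute the Elliott invariant of any such limit: $K_0 \cong \mathbb{Z}$ (with the order determined by the unique trace), $K_1 = 0$, a unique trace, and the pairing between $K_0$ and the trace being the standard one. The hypotheses (unique trace, only trivial projections) are exactly what is needed to see that the invariant is the same for any two such algebras. The real work is then the classification theorem itself: that inductive limits of dimension drop algebras are classified by this invariant. I would prove this by an approximate intertwining, lifting an isomorphism of invariants to a sequence of approximate $*$-homomorphisms between finite stages and using an Elliott intertwining argument to pass to the limit. \emph{The hard part} is establishing the existence and uniqueness (up to approximate unitary equivalence) of these finite-stage morphisms --- a uniqueness theorem for maps between dimension drop algebras that agree on the invariant. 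This requires delicate analysis of spectral multiplicities and of how traces constrain unitary equivalence classes of homomorphisms on these non-homogeneous building blocks, and it is precisely the technical heart of \cite{JiangSu}. Since the present paper only uses the statement and not its proof, I would simply cite \cite{JiangSu} for this final step rather than reproduce the intertwining.
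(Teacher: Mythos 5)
The paper gives no proof of this theorem: it is quoted directly from \cite{JiangSu} as background, so there is no internal argument to compare against. Your sketch accurately outlines the strategy of the cited proof (the explicit inductive system of prime dimension drop algebras with spread-out point-evaluation maps for simplicity, collapsing trace simplices for monotraciality, coprimality for projectionlessness and $K_0\cong\mathbb{Z}$, and an Elliott intertwining resting on existence/uniqueness theorems for morphisms between dimension drop algebras), and correctly defers the technical heart to \cite{JiangSu}, which is exactly what the paper does.
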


Note that by \ccite{Theorem 6.2.2}{EilersLoringPedersen:NCCW}, dimension drop algebras are semiprojective, so that Proposition \ref{LimitDStabilityCharacterization} applies to the above inductive limit description of $\jsZ$.

We recall the following presentation of $\jsZ_{p-1,p}$.
In the following, a ``$C^*$-algebra generated by a c.p.c.\ order zero map $\Phi:M_n$'' means a $C^*$-algebra $A$ together with a c.p.c.\ order zero map $\Phi:M_n \to A$ such that $A = C^*(\Phi(M_n))$.

\begin{prop} (R\o rdam-Winter) \alabel{DimDropPresentation}
The dimension drop algebra $\jsZ_{p-1,p}$ is the universal unital $C^*$-algebra generated by a c.p.c.\ order zero map $\Phi:M_p$ together with an element $v$ such that
\[ v^*v = (1-\Phi(1_p)) \quad \text{and} \quad v = \Phi(e_{11})v. \]
\end{prop}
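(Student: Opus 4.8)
The plan is to prove the two halves of the universal property separately: first that there exist generators $\Phi$ and $v$ inside $\jsZ_{p-1,p}$ satisfying the stated relations and generating the whole algebra, and then that any such pair in an arbitrary unital $C^*$-algebra induces a $*$-homomorphism out of $\jsZ_{p-1,p}$. Writing $U$ for the universal $C^*$-algebra attached to the presentation, the first half produces a surjection $U \to \jsZ_{p-1,p}$, the second half produces its inverse, and together they give $U \iso \jsZ_{p-1,p}$.

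For the realization, I would regard $\jsZ_{p-1,p}$ as paths $f$ with $f(0) \in M_{p-1}\tens 1_p$ and $f(1)\in 1_{p-1}\tens M_p$, and write $\Phi$ as the order zero map built from the embedding $x \mapsto 1_{p-1}\tens x$ of $M_p$ into the fibre at one endpoint, scaled by a scalar function of the path parameter chosen to degenerate at the opposite endpoint so that the boundary conditions defining $\jsZ_{p-1,p}$ are met; $v$ is then an explicit path interpolating the two boundary representations. Checking the relations $v^*v = 1 - \Phi(1_p)$ and $v = \Phi(e_{11})v$ is a direct pointwise computation. Generation is the substantive point here: continuous functional calculus applied to $\Phi(1_p)$ recovers all scalar functions of the path parameter, the matrix units of $M_p$ entering through $\Phi$ recover the fibrewise $M_p$-structure, and $v$ supplies the remaining identification with the $M_{p-1}$ boundary, so that the generated subalgebra separates points and is closed under the operations defining $\jsZ_{p-1,p}$, hence exhausts it.

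For the universal property, let $A$ be unital with an order zero map $\Phi \colon M_p \to A$ and an element $v$ satisfying the relations. The main tool is Proposition \ref{OrderZeroStructure}, which upgrades $\Phi$ to a genuine $*$-homomorphism $\hat\Phi \colon C_0((0,1],M_p) \to A$ with $\Phi(x) = \hat\Phi(\id_{(0,1]}\tens x)$; this linearizes the order zero data and lets me take functional calculus in the first tensor factor. I would then define the candidate homomorphism $\jsZ_{p-1,p} \to A$ on the concrete generators produced in the realization step, sending them to $\hat\Phi$ and $v$, and verify that every relation holding among those generators of $\jsZ_{p-1,p}$ is already a consequence of the presentation relations together with the order zero structure of $\hat\Phi$. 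Applying this construction to $A = U$ with its canonical generators produces a homomorphism $\jsZ_{p-1,p} \to U$ inverse to the surjection of the first step; equivalently, it shows that the surjection $U \to \jsZ_{p-1,p}$ is injective.

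I expect the crux to be exactly this last verification, namely that the presentation imposes no relations beyond those defining the path algebra $\jsZ_{p-1,p}$, that is, that $U$ is no larger than $\jsZ_{p-1,p}$. Concretely, one must analyze the possible representations of the generators and show that the spectrum of $\Phi(1_p)$ plays the role of the path parameter, that $\hat\Phi$ reproduces the fibrewise $M_p$, and, most delicately, that the interaction of $v$ with $\Phi(e_{11})$ encoded in $v = \Phi(e_{11})v$ and $v^*v = 1 - \Phi(1_p)$ forces exactly the rank and boundary behaviour of the dimension drop at the endpoints where $\Phi$ degenerates. Controlling this endpoint behaviour, rather than the interior of the path, is where the argument has to be most careful.
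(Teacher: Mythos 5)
The paper offers no proof of this proposition: it simply records it as a reformulation of \ccite{Proposition 5.1}{RordamWinter:Z}, so you are in effect attempting to reprove the R\o rdam--Winter result from scratch. Your outline has the right shape for a universal-property argument, but the realization half fails as described. If $\Phi(x)(t) = g(t)(1_{p-1}\tens x)$ for a scalar function $g$ vanishing at the endpoint where the boundary condition forces degeneration, then $\|\Phi(e_{11})(t)\| = g(t) < 1$ near that endpoint, and the relation $v=\Phi(e_{11})v$ gives $v(t)=\Phi(e_{11})(t)^n v(t)\to 0$, so $v(t)=0$ there, while $v^*v = 1-\Phi(1_p)$ is the identity of the fibre there; your ``direct pointwise computation'' would therefore refute your candidate generators. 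In fact the obstruction is absolute with the stated indexing: composing $\Phi$ with $\ev_0$ gives a c.p.c.\ order zero map $M_p \to M_{p-1}\tens 1_p \iso M_{p-1}$, which is necessarily zero because every irreducible representation of $C_0((0,1],M_p)$ is $p$-dimensional, so \emph{no} pair $(\Phi,v)$ satisfying the relations exists inside $\jsZ_{p-1,p}$ at all. (The presentation matches $\jsZ_{p,p+1}$, equivalently an order zero map from $M_{p-1}$ into $\jsZ_{p-1,p}$; this off-by-one is harmless for the paper's application but fatal to a literal realization argument.) Even after correcting the indexing, $v=\Phi(e_{11})v$ forces $\Phi(e_{11})(t)$ to have eigenvalue exactly $1$ with multiplicity at least $\rank(1-\Phi(1)(t))$ for every $t$, so the generators must come from a path of embeddings rotating between the two boundary representations, not from a fixed embedding rescaled by a scalar function.

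The second half is only a plan. Saying you will ``verify that every relation holding among those generators is a consequence of the presentation relations'' presupposes a presentation of the dimension drop algebra that you never exhibit; the entire mathematical content of the cited R\o rdam--Winter proposition is the construction, from an arbitrary pair $(\Phi,v)$ in an arbitrary unital $A$, of a unital $*$-homomorphism out of the dimension drop algebra, using the structure theorem for order zero maps, functional calculus on $\Phi(1)$ to recover the path parameter, and a rank/boundary analysis at the endpoints. You correctly locate the crux there, but locating it is not resolving it, so as written the proposal does not constitute a proof of either direction.
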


\begin{proof} This is a reformulation of \ccite{Proposition 5.1 (iii) $\Leftrightarrow$ (iv)}{RordamWinter:Z}. \end{proof}

\section{Finite nuclear dimension implies divisibility}
\alabel{DimNucDivSec}

The following generalizes \ccite{Proposition 4.4}{Winter:pure}

\begin{lemma}
\alabel{DimNucCovering}
Let $A$ be a simple, separable, nonelementary $C^*$-algebra with $\dn A \leq m < \infty$
(or more generally, $A$ may be separable with $\dn A \leq m < \infty$ and such that every quotient of every ideal of $A$ is nonelementary).
Let $e \in (A_\infty)_+$ be a positive contraction and let $X \subset A_\infty$ be a separable subspace.

Then for any $k \in \N$, there are c.p.c.\ order zero maps
\[ \psi^{(1)},\dots,\psi^{((m+1)^2)}:M_k \dsum M_{k+1} \to A_\infty \cap A' \cap X' \]
such that
\[ \sum_{j=1}^{(m+1)^2} \psi^{(j)}(1_k \dsum 1_{k+1}) \geq e. \]
\end{lemma}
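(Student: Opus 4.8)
The plan is to use the diagonal sequence argument of Section \ref{DiagArgSect} to reduce the exact statement in $A_\infty$ to a one-level, approximate statement in $A$. The conditions to be imposed on the tuple $(\psi^{(1)},\dots,\psi^{((m+1)^2)})$ are all admissible: that each $\psi^{(j)}$ is c.p.c.\ order zero (condition \ref{DA-ordzero}), that $\sum_j \psi^{(j)}(1) - e \geq 0$ (condition \ref{DA-pos}), and that each $\psi^{(j)}$ commutes with fixed dense sequences in $\iota(A)$ and in $X$ (condition \ref{DA-comm}); the norm bound needed for the argument is automatic since the $\psi^{(j)}$ are contractive. Thus it suffices, given a finite $\mathcal{F} \subset A \cup X$ and a tolerance $\e > 0$, to produce genuinely order zero maps $\psi^{(j)}:M_k \dsum M_{k+1} \to A_\infty$ that commute with $\mathcal{F}$ to within $\e$ and satisfy $\sum_j \psi^{(j)}(1) \geq e - \e$. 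Lifting and passing to a single copy of $A$, I would first fix a positive contraction $e' \in A_+$ acting as an approximate unit on $\mathcal{F}$ and on a lift of $e$, and dominating that lift.

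Next I would invoke $\dn A \leq m$. Applying it to a finite set $\mathcal{G}$ containing $\mathcal{F} \cup \{e'\}$ (enlarged as prescribed by Lemma \ref{GoodDnApproximations}) produces an $(m+1)$-colourable c.p.\ approximation $(F = F^{(0)} \dsum \cdots \dsum F^{(m)}, \psi, \phi)$ with each $\phi|_{F^{(i)}}$ c.p.c.\ order zero. By Lemma \ref{GoodDnApproximations}, the positive contractions $\phi^{(i)}\psi^{(i)}(e')$ approximately commute with $\mathcal{F}$ and satisfy $a\,\phi^{(i)}\psi^{(i)}(e') \approx \phi^{(i)}\psi^{(i)}(a)$, while their sum $\phi\psi(e')$ is within $\e$ of $e' \geq e$. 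Cutting each colour down by $\psi^{(i)}(e')^{1/2}$ and applying the structure theorem for order zero maps (Proposition \ref{OrderZeroStructure}), each colour yields an approximately central c.p.c.\ order zero map out of the finite-dimensional algebra $F^{(i)}$, and these together dominate $e$ to within $\e$.

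The heart of the matter is to replace the source $F^{(i)} = \bigoplus_s M_{r_{i,s}}$ by $M_k \dsum M_{k+1}$. Since $\gcd(k,k+1)=1$, the Frobenius (Chicken--McNugget) bound gives that every integer $r \geq k(k-1)$ can be written $r = ak + b(k+1)$ with $a,b \geq 0$, hence admits a \emph{unital} $*$-homomorphism $M_k \dsum M_{k+1} \to M_r$. Composing such embeddings blockwise with (the cut-down of) $\phi^{(i)}$ produces genuine c.p.c.\ order zero maps $M_k \dsum M_{k+1} \to A_\infty$ recovering $\phi^{(i)}(1_{F^{(i)}})$ --- provided every block has dimension at least $k(k-1)$. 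The main obstacle is the blocks of small dimension: a nonzero order zero image of $M_k \dsum M_{k+1}$ always has rank at least $k$, so any matrix block of dimension $< k$ \emph{cannot} be covered at all, and small blocks may carry a non-negligible amount of trace, so they cannot simply be discarded.

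This is where the nonelementarity hypothesis is spent and where the second factor of $(m+1)$ arises. I would split $1_{F^{(i)}}$ into its large-block part --- handled directly by the Frobenius decomposition --- and its small-block part, and cover the image of the latter by a second covering, again governed by the bound $\dn \leq m$ and carried out inside the hereditary subalgebra generated by the small blocks (which in the general case is again separable, of nuclear dimension $\leq m$, with all relevant quotients nonelementary). Here Glimm's Halving Lemma \ccite{Lemma 6.7.1}{Pedersen:CstarBook} (used much as in Remark \ref{DivNonzeroRmk}) supplies order zero matrix copies large enough to feed back into the Frobenius decomposition. Combining the two coverings gives $(m+1)\cdot(m+1) = (m+1)^2$ order zero maps. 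Finally the diagonal sequence argument promotes approximate commutation with $\mathcal{F}$ to exact commutation with $A$ and $X$, and the approximate domination to the exact inequality $\sum_j \psi^{(j)}(1) \geq e$, completing the proof. The genuinely delicate step, which I expect to be the main obstacle, is making this small-block correction simultaneously order zero, approximately central, and tracially large enough that the final sum still dominates $e$.
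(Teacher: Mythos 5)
There is a genuine gap at the step you yourself single out as the heart of the matter: the small-block correction. The mechanism you propose for it --- a second covering carried out inside the hereditary subalgebra generated by the small-block images, with Glimm's Halving Lemma supplying matrix copies --- cannot deliver the required domination $\sum_j \psi^{(j)}(1_k \dsum 1_{k+1}) \geq e$. Glimm halving produces a \emph{nonzero} c.p.c.\ order zero map from a matrix algebra into any hereditary subalgebra without finite-dimensional representations, but it gives no lower bound at all on the size of its image (tracially or in the sense of order/Cuntz domination), so it cannot account for whatever mass the small blocks carry; and a second $(m+1)$-colourable c.p.\ approximation of the small-block hereditary subalgebra reintroduces exactly the same dichotomy between large and small blocks, so the recursion you sketch does not visibly terminate. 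There is also an unaddressed centrality problem: elements of $\her{\phi^{(i)}(q^{(i)})}$, where $q^{(i)}$ is the unit of the small-block summand, need not approximately commute with $\mathcal{F}$ merely because $\phi^{(i)}(q^{(i)})$ does. Your closing sentence flags this cluster of difficulties honestly, but flagging the main obstacle is not the same as overcoming it; as written the argument does not close.

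For comparison, the paper does not reprove this covering statement at all: it quotes it directly from the proof of \ccite{Proposition 4.3}{WinterZacharias:NucDim} (which is where the Frobenius decomposition, the small-block analysis, and the exponent $2$ in $(m+1)^2$ actually live), together with the subsequence argument of \ccite{Proposition 4.4}{Winter:pure} to obtain commutation with the separable set $X$. The only new content in the paper's proof is precisely the point your write-up glosses over: that subsequence argument needs the representing sequence of $e$ to be increasing, and your device of ``a positive contraction $e' \in A_+$ acting as an approximate unit on a lift of $e$ and dominating that lift'' is unavailable when $A$ is nonunital (already for $A=C_0((0,1])$ no single contraction dominates, even modulo $c_0$, a representing sequence of a general positive contraction in $A_\infty$). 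The paper's fix is to choose an increasing approximate unit $(f_n)$ of $A$ with $\pi_\infty(f_n)e=e$, hence $e \leq \pi_\infty(f_n)$, and to run the covering argument against $(f_n)$ in place of $(e_n)$. If you want a self-contained route you must reproduce the small-block argument of \ccite{Proposition 4.3}{WinterZacharias:NucDim} in full; otherwise the economical course is to cite it, as the paper does, and concentrate on the increasing-approximate-unit reduction.
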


\begin{proof}
Let $e = \pi_\infty(e_n)$ for some sequence $(e_n)$ of positive contractions.
The proof of \ccite{Proposition 4.3}{WinterZacharias:NucDim} shows that we can find 
\[ \phi^{(1)}, \dots, \phi^{((m+1)^2)}:M_k \dsum M_{k+1} \to A_\infty \cap A' \]
such that
\[ \sum_{j=1}^{(m+1)^2} \phi^{(j)}(1_k \dsum 1_{k+1}) \geq \pi_\infty(e_n) \]
(note that the condition that every quotient of every ideal of $A$ is nonelementary is equivalent to the condition that no hereditary subalgebra of $A$ has a finite dimensional representation).
In the proof of \ccite{Proposition 4.4}{Winter:pure}, a subsequence argument is used (in the case that $e_n=1$) to get maps that additionally commute with a separable subset $X$.
The argument works here, as long as $(e_n)$ is increasing.
However, given any contractive $\pi_\infty(e_n)$ we can find an increasing approximate unit $(f_n)$ such that $\pi_\infty(f_n)\pi_\infty(e_n) = \pi_\infty(e_n)$.
Therefore, $\pi_\infty(e_n) \leq \pi_\infty(f_n)$, and it suffices to run the argument with $(f_n)$ in place of $(e_n)$.
\end{proof}

The following lemma is used both here and later in Section \ref{DivCentralSec}.
It abstracts arguments found in the proofs of \ccite{Proposition 3.6}{Winter:drZstable}, and \ccite{Propositions 4.6, 5.3}{Winter:pure}.

\begin{lemma}
\alabel{ManyOrthogTraceMaintainers}
Let $A$ be a $C^*$-algebra and let $B \subseteq A_\infty$ be a separable subalgebra.
Suppose that $k,\ell \in \mathbb{N}$ and $\gamma > 0$ are such that, for any separable commutative subalgebra $C \subseteq A_\infty \cap B'$ of nuclear dimension at most $1$, there exist c.p.c.\ order zero maps $\psi_i:M_{\ell_i} \to A_\infty \cap (B \cup C)'$ for $i=1,\dots,k$ such that
\[ \tau(\sum_{i=1}^k \psi_i(1_{\ell_i})b) \geq \gamma\tau(b) \]
for all $b \in C^*(B \cup C)_+$ and all $\tau \in T^1_\infty(A)$, and such that $\ell_i \in [2k,\ell]$ for each $i$.

Then for any $p \in \mathbb{N}$, there exist pairwise orthogonal contractions $d_1,\dots,d_{2^p} \in A_\infty \cap B'$ such that
\[ \tau(d_ib) \geq \left(\frac\gamma{k\ell}\right)^p\tau(b) \]
for all $b \in B_+$, all $i=1,\dots,2^p$, and all $\tau \in T^1_\infty(A)$.
\end{lemma}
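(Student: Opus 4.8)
The plan is to prove the statement by induction on $p$, building a binary tree of pairwise orthogonal elements in which each node is obtained by splitting its parent into two orthogonal ``tracially large'' halves. The invariant I would carry is: at stage $p$ there are pairwise orthogonal (hence pairwise commuting) positive contractions $d_1,\dots,d_{2^p}\in A_\infty\cap B'$ with $\tau(d_jb)\geq(\gamma/(k\ell))^p\tau(b)$ for all $b\in B_+$ and all $\tau\in T^1_\infty(A)$. The base case $p=0$ is handled by taking $d_1$ to be a positive contraction in $A_\infty\cap B'$ acting as a two-sided unit on $B$ (which exists by the diagonal sequence argument of Section~\ref{DiagArgSect}, condition~\ref{DA-unit1}), so that $\tau(d_1b)=\tau(b)$. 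Everything then reduces to a single \emph{splitting step}: given a node $d$ with $\tau(db)\geq\beta\tau(b)$ that commutes with $B$ and with a prescribed finite family of pairwise orthogonal contractions, produce two orthogonal contractions $d',d''$, both dominated by $d$ (so that orthogonality to the rest of the family is inherited) and commuting with $B$, with $\tau(d'b),\tau(d''b)\geq\tfrac{\gamma}{k\ell}\,\beta\,\tau(b)$.

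For the splitting step I would invoke the hypothesis with $C$ taken to be the commutative subalgebra of $A_\infty\cap B'$ generated by $d$ together with all the other current nodes. Since these generators are pairwise orthogonal positive elements, each with one-dimensional spectrum, $C$ is commutative with $\dn C\leq 1$; this is precisely the role of allowing arbitrary such $C$ in the hypothesis, as it lets the order zero maps dodge the entire (growing) orthogonal family at once. The hypothesis then yields c.p.c.\ order zero maps $\psi_i\colon M_{\ell_i}\to A_\infty\cap(B\cup C)'$ ($i=1,\dots,k$, with $\ell_i\in[2k,\ell]$) satisfying $\tau(\sum_i\psi_i(1_{\ell_i})c)\geq\gamma\tau(c)$ for all $c\in C^*(B\cup C)_+$. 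Setting $a:=\sum_i\psi_i(e_{11})$, which commutes with $B$ and $d$ and has $\|a\|\leq k$, the fact that $c\mapsto\tau(\psi_i(\cdot)c)$ is tracial on $M_{\ell_i}$ gives $\tau(ac)=\sum_i\tfrac1{\ell_i}\tau(\psi_i(1_{\ell_i})c)\geq\tfrac1\ell\tau(\sum_i\psi_i(1_{\ell_i})c)\geq\tfrac\gamma\ell\tau(c)$ for $c\in C^*(B\cup C)_+$. Now I would run the peeling argument abstracted from \ccite{Proposition 3.6}{Winter:drZstable} and \ccite{Propositions 4.6, 5.3}{Winter:pure}: choosing a positive contraction $e\in A_\infty\cap B'$ acting as a unit on $C^*(B\cup C)\cup\bigcup_i\psi_i(M_{\ell_i})$ (again via the diagonal argument), put the divisible corner $d_0:=g_{\eta,2\eta}(a)$ and the complement $d_1:=e-g_{0,\eta}(a)$. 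These are orthogonal, and for $c\in C^*(B\cup C)_+$ functional calculus gives $\tau(d_0c)\geq(\tfrac\gamma{k\ell}-\eta)\tau(c)$, while the estimate $\tau(c)-\tau(d_1c)=\tau(g_{0,\eta}(a)c)\leq\sum_i\tfrac1{\ell_i}\tau(c)\leq\tfrac12\tau(c)$ --- using $\min_i\ell_i\geq 2k$ --- gives $\tau(d_1c)\geq\tfrac12\tau(c)$. Finally I would set $d':=d_0d$ and $d'':=d_1d$; these are orthogonal contractions dominated by $d$ and commuting with $B$, and taking $c=db\in C^*(B\cup C)_+$ yields $\tau(d'b)\geq(\tfrac\gamma{k\ell}-\eta)\tau(db)$ and $\tau(d''b)\geq\tfrac12\tau(db)$.

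To close the induction I would process the $2^p$ parents one at a time, splitting each via the step above and placing the previously produced children into the algebra $C$ used for the next split; children of distinct parents are automatically orthogonal because they lie under the orthogonal parents. Since the hypothesis forces $\gamma\leq\|\sum_i\psi_i(1_{\ell_i})\|\leq k\leq k\ell/2$, both factors $\tfrac\gamma{k\ell}$ and $\tfrac12$ are at least $\tfrac\gamma{k\ell}$, so each of the $2^{p+1}$ resulting elements satisfies $\tau(\,\cdot\,b)\geq\tfrac\gamma{k\ell}(\gamma/(k\ell))^p\tau(b)=(\gamma/(k\ell))^{p+1}\tau(b)$, completing the inductive step. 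The residual $\eta$ in the bound for $d_0$ is absorbed in the usual way: each splitting inequality holds approximately for every $\eta>0$, so the diagonal sequence argument of Section~\ref{DiagArgSect} produces elements of $A_\infty$ satisfying the limiting (exact) inequalities.

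I expect the main obstacle to be the bookkeeping inside $A_\infty$ that makes the splitting step legitimate: simultaneously arranging that $d',d''$ are genuine contractions dominated by $d$, orthogonal to one another, orthogonal to the entire previously constructed family, and still exactly commuting with $B$, all while converting the approximate ``acting as a unit'' relations (for $e$ over $C^*(B\cup C)$ and the ranges of the $\psi_i$) into exact identities via the diagonal sequence argument. The conceptual content --- doubling via the complement together with a divisible corner, with the orthogonal family absorbed into a nuclear-dimension-one commutative algebra --- is exactly the mechanism already present in \cite{Winter:pure}; the work lies in transporting it to the nonunital, asymptotic-sequence-algebra setting.
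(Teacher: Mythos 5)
Your proposal is correct and follows essentially the same route as the paper's proof: induction on $p$ with a local unit as the base case, an application of the hypothesis to the commutative algebra generated by the current orthogonal family, the split into $g_{\eta,2\eta}(a)$ and $e-g_{0,\eta}(a)$ with $a=\sum_i\psi_i(e_{11})$ (using $\ell_i\geq 2k$ for the complementary half and $\|a\|\leq k$ for the corner), and the diagonal sequence argument to absorb $\eta$. The only immaterial difference is that the paper performs a single split and multiplies the resulting orthogonal pair against all $2^p$ parents simultaneously, rather than splitting the parents one at a time.
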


\begin{proof}
We prove this by induction.
For the case $p=0$, we may take $d_1 \in A_\infty$ to be any positive contraction which acts as a unit on $B$.

For the inductive step, let $p \geq 0$, and we begin with $d_1,\dots,d_{2^p}$ such that
\[ \tau\left(d_i b\right) \geq \left(\frac{\gamma}{k\ell}\right)^p \tau(b) \]
for all $b \in B_+$ and all $\tau \in T^1_\infty(A)$.
We want to construct $2^{p+1}$ elements.

We set $C = C^*(d_1,\dots,d_{2^p})$, which is a commutative $C^*$-algebra of nuclear dimension at most $1$, and which commutes with $B$.
Therefore, let $\psi_i:M_{\ell_i} \to A_\infty \cap (B \cup C)'$ be as given in the hypothesis.
We will use a diagonal sequence argument (see Section \ref{DiagArgSect}, and in particular, condition \ref{DA-trace}).

Let $\gamma_0 < \frac\gamma{k\ell}$ and set $\eta := \frac\gamma{k\ell} - \gamma_0$.
Define $a := \sum_i \psi_i(e_{11}) \in A_\infty \cap (B \cup C)'$, and note that
\[ \tau(ab) \geq \frac\gamma\ell \tau(b) \]
for all $b \in C^*(C \cup B)_+$ and all $\tau \in S$ (since $\tau(\psi_i(\cdot)b)$ defines a tracial functional on $M_{\ell_i}$).

Let $e \in (A_\infty)_+$ be a positive contraction which acts as a unit on $(B \cup C \cup \{a\})$.
Define
\[ d_1' := g_{\eta,2\eta}(a) \]
and
\[ d_2' := e - g_{0,\eta}(a). \]

Using the fact that $e$ acts as an identity on $a$, we see that $d_2' \geq 0$ and $d_1'd_2' = 0$.
For $b \in C^*(B \cup C)_+$, we have $d_1'b \geq \left(\frac{a}{\|a\|} - \eta\right)b$,
so that for $\tau \in S$,
\begin{equation}
\alabel{ManyOrthogTraceMaintainers-d1ineq}
\tau(d_1'b) \geq \left(\frac{\gamma} {\ell \|a\|} - \eta\right) = \gamma_0\tau(b).
\end{equation}

Also, $b - d_2'b = (e-d_2')b = g_{0,\eta}(a)b$
so that
\begin{align*}
\tau(b) - \tau(d_2'b) &= \tau( g_{0,\eta}(a) b ) \\
&\leq \lim_{n \to \infty} \tau( a^{1/n} b ) \\
&\leq \sum_i \lim_{n \to \infty} \tau(\psi_i(e_{11})^{1/n} b) \\
&= \sum_i \lim_{n \to \infty} \tau( (\psi_i)^{1/n}(e_{11}) b) \\
&= \sum_i \lim_{n \to \infty} \frac1{\ell_i} \tau( (\psi_i)^{1/n}(1_{\ell_i}) b) \\
&\leq \left(\sum_i \frac{1}{2k}\right)\tau(b) \\
&\leq \frac12 \tau(b).
\end{align*}
In particular,
\begin{equation}
\alabel{ManyOrthogTraceMaintainers-d2ineq}
\tau(d_2'b) \geq \frac12 \tau(b) \geq \gamma_0\tau(b).
\end{equation}

Using commutativity, $(d'_id_j)_{i=1,2,j=1,\dots,2^p}$ is a family of $2^{p+1}$ orthogonal positive contractions inside $A_\infty \cap B'$.
By \eqref{ManyOrthogTraceMaintainers-d1ineq} and \eqref{ManyOrthogTraceMaintainers-d2ineq}, each member $d'_id_j$ of this family satisfies
\[ \tau(d'_id_jb) \geq \gamma_0\tau(d_jb) \geq \gamma_0\left(\frac\gamma{k\ell}\right)^p \tau(b), \]
for all $b \in B_+$ and all $\tau \in T^1_\infty(A)$.
Since $\gamma_0$ can be chosen arbitrarily close to $\frac\gamma{k\ell}$, the result follows by the diagonal sequence argument.
\end{proof}

Here is a generalization of \ccite{Proposition 4.7?}{Winter:pure}

\begin{thm}
\alabel{DimNucTrAD}
Given $m$ there exists $\tilde{m}$ such that the following holds.
If $A$ is algebraically simple, separable, nonelementary, and with nuclear dimension at most $m$, then $A$ is tracially $\tilde m$-almost divisible.
\end{thm}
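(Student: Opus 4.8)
The plan is to verify the sequence-algebra reformulation of tracial almost divisibility supplied by Lemma \ref{SequenceAlgDiv}. So I fix strongly nonzero positive contractions $d,e \in (A_\infty)_+$ with $ed=d$ and an integer $K \in \N$, and aim to produce a c.p.c.\ order zero map $\psi\colon M_K \to A_\infty$ with $e\psi(x)=\psi(x)$, with $\psi(1_K)$ strongly nonzero, and with $\tau(\psi(1_K)) \geq \frac1{\tilde m+1}\tau(d)$ for all $\tau \in T^1_\infty(A)$, where $\tilde m$ depends only on $m$. First note that $ed=d$ forces $de=d$ by taking adjoints, hence $ed=de$, so $e$ and $d$ commute and $e \geq d$; thus any cover dominating $d$ suffices.

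Next I produce the raw order zero maps. Applying Lemma \ref{DimNucCovering} with matrix sizes $K,K+1$, separable subspace $X=C^*(d,e)$, and $d$ as the positive contraction to be dominated, yields $(m+1)^2$ c.p.c.\ order zero maps $\psi^{(j)}\colon M_K \dsum M_{K+1} \to A_\infty \cap A' \cap X'$ with $\sum_j \psi^{(j)}(1_K \dsum 1_{K+1}) \geq d$. Each $\psi^{(j)}$ splits, along the orthogonal summands $M_K$ and $M_{K+1}$, into two order zero maps; restricting the $M_{K+1}$-part to the upper-left corner $M_K \subset M_{K+1}$ and using that an order zero map assigns equal trace to each $e_{ii}$ (via Proposition \ref{OrderZeroStructure}), I obtain $N:=2(m+1)^2$ c.p.c.\ order zero maps $\phi_1,\dots,\phi_N\colon M_K \to A_\infty$, each commuting with $d$ and $e$, such that
\[ \sum_{s=1}^N \tau(\phi_s(1_K)) \;\geq\; \frac{K}{K+1}\,\tau(d) \;\geq\; \tfrac12\,\tau(d) \]
for every $\tau \in T^1_\infty(A)$.

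Now I separate these overlapping maps using orthogonal trace-maintainers. Set $B:=C^*\big(\{d,e\}\cup\bigcup_s \phi_s(M_K)\big)$, a separable subalgebra of $A_\infty$. The hypothesis of Lemma \ref{ManyOrthogTraceMaintainers} holds with $\gamma=1$: for any commutative $C \subseteq A_\infty \cap B'$ of nuclear dimension at most $1$, Lemma \ref{DimNucCovering} (applied with $X=C^*(B\cup C)$ and a unit for it as the element to be dominated) produces, after splitting summands, $k:=2(m+1)^2$ order zero maps into $A_\infty \cap (B\cup C)'$ from matrices of sizes $\ell_i \in \{k_0,k_0+1\}$ whose units sum to at least that unit, whence $\tau(\sum_i \psi_i(1_{\ell_i})b)\geq\tau(b)$ for all $b \in C^*(B\cup C)_+$; taking $k_0=2k$ puts $\ell_i \in [2k,\ell]$ with $\ell=2k+1$. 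Choosing $p$ with $2^p \geq N$, Lemma \ref{ManyOrthogTraceMaintainers} then gives pairwise orthogonal positive contractions $d_1,\dots,d_N \in A_\infty \cap B'$ with $\tau(d_s b)\geq c_0\,\tau(b)$ for all $b \in B_+$ and all $\tau$, where $c_0=(k\ell)^{-p}$ depends only on $m$. Since each $d_s$ commutes with $\phi_s(M_K)$ and the $d_s$ are pairwise orthogonal, $\psi:=\sum_{s=1}^N d_s\,\phi_s(\cdot)$ is a c.p.c.\ order zero map $M_K \to A_\infty$ (its summands have pairwise orthogonal ranges), and, using $\phi_s(1_K)\in B_+$,
\[ \tau(\psi(1_K)) = \sum_{s=1}^N \tau\big(d_s\,\phi_s(1_K)\big) \;\geq\; c_0\sum_{s=1}^N \tau(\phi_s(1_K)) \;\geq\; \tfrac{c_0}2\,\tau(d). \]
Setting $\tilde m:=\lceil 2/c_0\rceil-1$, which depends only on $m$, gives the required bound.

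The essential difficulty this overcomes is that the cover controls only the \emph{sum} $\sum_j \tau(\psi^{(j)}(1))$; for a fixed trace the dominant colour $j$ may be arbitrary, so no single one of the $\phi_s$ carries a uniform trace lower bound. Multiplying the $\phi_s$ by the orthogonal, uniformly trace-large elements $d_s$ from Lemma \ref{ManyOrthogTraceMaintainers} is precisely what converts the $\tau$-dependent mass of the overlapping family into one honest order zero map with a uniform bound, and securing $\gamma=1$ there (hence a clean constant $c_0=c_0(m)$) is the main technical point. Finally, the auxiliary requirements that $e$ act as a unit on $\psi(M_K)$ and that $\psi(1_K)$ be strongly nonzero are arranged by keeping $e$ in the relevant commutants and taking the covering units and the $d_s$ strongly nonzero, then invoking the diagonal sequence argument of Section \ref{DiagArgSect} (conditions \ref{DA-unit2} and \ref{DA-trace}) to pass from these approximate constraints to the exact ones demanded by Lemma \ref{SequenceAlgDiv}.
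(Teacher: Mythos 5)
Your overall strategy is the one the paper uses: obtain a covering family of order zero maps from Lemma \ref{DimNucCovering}, orthogonalize with the trace-maintainers of Lemma \ref{ManyOrthogTraceMaintainers} (with $\gamma=1$, verified again by Lemma \ref{DimNucCovering}), and multiply the two families together to get a single order zero map with a uniform trace lower bound. Your handling of the $M_K$ versus $M_{K+1}$ mismatch --- restricting the $M_{K+1}$ summand to its upper-left $M_K$ corner and absorbing the factor $\tfrac{K}{K+1}\geq\tfrac12$ into the constant $\tilde m$ --- is a legitimate simplification of the paper's device (which instead works with a large $\overline{k}$ divisible by $k$ and absorbs the loss $\tfrac1{\overline{k}+1}$ into the tolerance $\e$), and is fine since the theorem only asserts the existence of \emph{some} $\tilde m=\tilde m(m)$.

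There is, however, one genuine gap: your $\psi=\sum_s d_s\phi_s(\cdot)$ does not satisfy $e\psi(x)=\psi(x)$, not even approximately, and this condition cannot be dropped --- in Lemma \ref{SequenceAlgDiv} it is exactly what forces the lifted maps to land in $\her{b}$, as the definition of tracial almost-divisibility demands. Keeping $e$ in the commutants of the $\phi_s$ and the $d_s$ only gives $e\psi(x)=\psi(x)e$; it does not put $\psi(M_K)$ under $e$, and the diagonal sequence argument cannot manufacture the condition if it fails to hold approximately. The obvious repair, multiplying through by $d$ (so that $ed=d$ does the work), clashes with your choice of having the cover dominate $d$: you would then only get $\sum_s\tau(d\,\phi_s(1_K))\gtrsim\tau(d^2)$, which is not uniformly comparable to $\tau(d)$. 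The paper's arrangement resolves exactly this: the cover is taken to dominate $e$ rather than $d$, and the final map is $\sum_r\phi^{(r)}(\cdot)\,d^{(r)}d$, so that the factor $d$ both enforces the $e$-unit condition and, via $\tau(d^{1/2}ed^{1/2})=\tau(d)$, preserves the trace bound. You should also treat the case $T(A)=\emptyset$ separately (as in Remark \ref{DivNonzeroRmk}): there the trace inequalities are vacuous, but the definition still requires a \emph{nonzero} order zero map into $\her{b}$, and your construction gives no lower bound on $\|\psi(1_K)\|$ without traces.
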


\begin{proof}
Set 
\[ p := \lceil 2 \log_2 (m+1) \rceil, \]
and
\[ \tilde{m} := \left(2(m+1)^4 + (m+1)^2\right)^p-1. \]

In case $T(A) = \emptyset$, $A$ is automatically tracially $\tilde{m}$-almost divisible, see Remark \ref{DivNonzeroRmk}.

For $T(A) \neq \emptyset$, we will approximately verify the condition in Lemma \ref{SequenceAlgDiv} and appeal to the diagonal sequence argument (Section \ref{DiagArgSect}).
(As in Remark \ref{DivNonzeroRmk}, the ``strictly nonzero'' part of Lemma \ref{SequenceAlgDiv} holds automatically in this case.)
Let $\e > 0$ be our tolerance.
Let $d,e \in (A_\infty)_+$ be positive contractions such that $ed=d$ and $k \in \N$.
Pick some $\overline{k} \in \N$ satisfying
\[ \frac1{\overline{k}+1} < \e \quad \text{and} \quad k\big|\overline{k}. \]

Use Lemma \ref{DimNucCovering} to get c.p.c.\ order zero maps $\phi^{(r)}:M_{\overline{k}} \dsum M_{\overline{k}+1} \to A_\infty \cap \{d\}'$, $r=1,\dots,(m+1)^2$ such that
\[ \sum_{r=1}^{(m+1)^2} \phi^{(r)}(1_k \dsum 1_{k+1}) \geq e. \]
We now wish to use Lemma \ref{ManyOrthogTraceMaintainers} with $k=(m+1)^2$, $\ell=2(m+1)^2+1$, and $\gamma=1$ (note that this makes $k_0 = \tilde{m}+1$).
Lemma \ref{DimNucCovering} immediately verifies the hypotheses of Lemma \ref{ManyOrthogTraceMaintainers}.
Therefore by Lemma \ref{ManyOrthogTraceMaintainers}, we obtain pairwise orthogonal contractions
\[ d^{(1)},\dots,d^{((m+1)^2)} \in A_\infty \cap \{d\}' \cap \left(\bigcup_r \phi^{(r)}(M_{\overline{k}} \dsum M_{\overline{k}+1})\right)' \]
such that
\begin{equation}
\alabel{DimNucTrAD-dTauIneq}
 \tau(d_ic) \geq \frac1{\tilde m+1}\tau(c)
\end{equation}
for all $c \in C^*\left(\{d\} \cup \bigcup_r \phi^{(r)}(M_{\overline{k}} \dsum M_{\overline{k}+1})\right)$.

Define $\overline{\phi}:M_{\overline{k}} \dsum M_{\overline{k}+1} \to A_\infty$ by
\[ \overline{\phi}(x) := \sum_{r} \phi^{(r)}(x)d^{(r)}d. \]
Since this is a sum of c.p.c.\ maps with orthogonal ranges, it is c.p.c.\ order zero; it clearly satisfies $e\overline{\phi}(x) = \overline{\phi}(x)$ for all $x \in M_{\overline{k}} \dsum M_{\overline{k}+1}$.
Moreover, for $\tau \in T^1_\infty(A)$,
\begin{align*}
\tau(\overline{\phi}(1_{\overline{k}} \dsum 1_{\overline{k}+1})) &= \sum_r \tau\left(\phi^{(r)}(1_{\overline{k}} \dsum 1_{\overline{k}+1})dd^{(r)}\right) \\
&\geq^{\eqref{DimNucTrAD-dTauIneq}} \frac1{\tilde{m}+1} \sum_r \tau\left(\phi^{(r)}(1_{\overline{k}} \dsum 1_{\overline{k}+1})d\right) \\
&\geq \frac1{\tilde{m}+1}\tau(d).
\end{align*}
Since $\overline{\phi}$ is order zero, it follows that
\[ \tau(\overline{\phi}(1_{\overline{k}} \dsum 1_{\overline{k}})) \geq \frac1{\tilde{m}+1}\left(1-\frac1{\overline k + 1}\right)\tau(d) > \frac{1-\e}{\tilde{m}+1} \]
for all $\tau \in T^1_\infty(A)$.
Define $\phi:M_k \to A_\infty \cap \{d\}'$ by
\[ \phi(x) := \overline{\phi}(x \tens 1_{(\overline k/k)} \dsum x \tens 1_{(\overline k/k)}). \]
Then $e\phi(x) = \phi(x)$ for all $x \in M_k$, and $\tau(\phi(1)) \geq \frac{1-\e}{\tilde m+1}\tau(d)$ for all $\tau \in T^1_\infty(A)$.
This verifies the condition of Lemma \ref{SequenceAlgDiv} up to a tolerance $\e$; hence the result follows by the diagonal sequence algebra.
\end{proof}

\section{Tracially large, approximately central matrix cones}
\alabel{DivCentralSec}

The main result of this section shows that, in the presence of locally finite nuclear dimension, a divisibility property like that given in Lemma \ref{SequenceAlgDiv} implies a stronger one, where in particular, the range of the order zero map commutes with the algebra $A$ (and the trace-scaling constant becomes $1$).
The arguments have evolved from those found in \ccite{Section 5}{Winter:pure}.

\begin{lemma}
\alabel{CommutingDimNuc}
Let $A$ be a $C^*$-algebra and let $B,C \subseteq A$ be subalgebras such that $[B,C] = 0$.
Then
\[ \dn C^*(B \cup C) \leq (\dn B + 1)(\dn C + 1) - 1; \]
\end{lemma}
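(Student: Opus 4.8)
The plan is to realise $C^*(B \cup C)$ as a quotient of a tensor product and then invoke the standard permanence properties of nuclear dimension. First I would dispose of the trivial case: if $\dn B = \infty$ or $\dn C = \infty$, then the right-hand side $(\dn B + 1)(\dn C + 1) - 1$ is infinite and there is nothing to prove. So I may assume $m := \dn B < \infty$ and $n := \dn C < \infty$. Finite nuclear dimension forces nuclearity, so both $B$ and $C$ are nuclear; in particular the maximal and minimal tensor norms coincide, and I write $B \otimes C$ for the resulting (unique) tensor product.

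Next I would produce the quotient map. Since $[B,C] = 0$, the bilinear assignment $(b,c) \mapsto bc$ is $*$-preserving and multiplicative on elementary tensors: $(b_1 \otimes c_1)(b_2 \otimes c_2) \mapsto b_1 b_2 c_1 c_2 = (b_1 c_1)(b_2 c_2)$, precisely because $c_1$ and $b_2$ commute. Hence it defines a $*$-homomorphism on the algebraic tensor product, and by the universal property of the (maximal, hence here unique) tensor norm the commuting pair of inclusions $B \hookrightarrow A$, $C \hookrightarrow A$ induces a $*$-homomorphism $\mu \colon B \otimes C \to A$ with image exactly $C^*(B \cup C)$. Thus $C^*(B \cup C) \cong (B \otimes C)/\ker\mu$ is a quotient of $B \otimes C$.

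Finally I would chain two facts from \cite{WinterZacharias:NucDim}: the tensor-product estimate $\dn(B \otimes C) \leq (\dn B + 1)(\dn C + 1) - 1$, and the fact that nuclear dimension does not increase under quotients, so that $\dn C^*(B \cup C) \leq \dn(B \otimes C)$. Combining these yields $\dn C^*(B \cup C) \leq (\dn B + 1)(\dn C + 1) - 1$, as required.

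The conceptual heart is the tensor-product estimate, which specialises here transparently, and isolating where the real work lies is the point. Given $(m+1)$- and $(n+1)$-colourable approximations $(\psi_B, \phi_B, \bigoplus_i F_B^{(i)})$ and $(\psi_C, \phi_C, \bigoplus_j F_C^{(j)})$ of $B$ and $C$, the upward map $\mu \circ (\phi_B \otimes \phi_C)$ restricted to each of the $(m+1)(n+1)$ blocks $F_B^{(i)} \otimes F_C^{(j)}$ is c.p.c.\ order zero, and this is the easy part: by Proposition \ref{OrderZeroStructure} the maps $\phi_B|_{F_B^{(i)}}$ and $\phi_C|_{F_C^{(j)}}$ have the form $x \mapsto h_B\,\pi_B(x)$ and $y \mapsto h_C\,\pi_C(y)$, with $h_B = \phi_B(1_{F_B^{(i)}}) \in B$ and $h_C = \phi_C(1_{F_C^{(j)}}) \in C$ positive contractions commuting with the respective $*$-homomorphisms; because $[B,C] = 0$, the product $h_B h_C$ is again a positive contraction commuting with the combined $*$-homomorphism $x \otimes y \mapsto \pi_B(x)\pi_C(y)$, whence the block map is order zero. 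The genuine obstacle is the encoding (downward) map: $\psi_B \otimes \psi_C$ naturally lives on $B \otimes C$ and must be transported along $\mu$ to $C^*(B \cup C)$. This is exactly what the quotient-permanence property packages (equivalently, one lifts the identity of $C^*(B \cup C)$ through $\mu$ by a completely positive map and precomposes with $\psi_B \otimes \psi_C$). Reproving that permanence from scratch, rather than citing it, would be the main work.
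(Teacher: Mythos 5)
Your overall strategy (realise $C^*(B\cup C)$ inside a tensor product and then quote the tensor-product estimate together with permanence of nuclear dimension) is the same as the paper's, but there is a genuine gap in the identification step. The $*$-homomorphism $\mu\colon B \tens C \to A$ induced by the commuting inclusions sends $b \tens c$ to $bc$, so its image is $\overline{\Span}(BC)$, \emph{not} $C^*(B\cup C)$: the closed span of products need not contain $B$ or $C$. The simplest failure is already covered by the hypotheses of the lemma: if $B$ and $C$ are orthogonal subalgebras (so $BC=0$), they certainly commute, and then $\mu(B\tens C)=0$ while $C^*(B\cup C) = B \dsum C$. More generally $\overline{\Span}(BC)$ is only a (typically proper) subalgebra of $C^*(B\cup C)$, so exhibiting $C^*(B\cup C)$ as a quotient of $B\tens C$ fails, and with it the application of quotient-permanence.

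The paper's proof repairs exactly this point by unitizing first: the commuting inclusions extend to $B^\sim, C^\sim \subseteq A^\sim$, giving $\mu\colon B^\sim \tens C^\sim \to A^\sim$ with $b\tens 1 \mapsto b$ and $1 \tens c \mapsto c$, and then $C^*(B\cup C)$ is the image under $\mu$ of the ideal $J = \ker\left(B^\sim \tens C^\sim \to (B^\sim/B)\tens(C^\sim/C) \iso \C\right)$, i.e.\ a quotient of an ideal of $B^\sim \tens C^\sim$. One then needs, in addition to the two facts you cite, that nuclear dimension does not increase when passing to ideals (or hereditary subalgebras) and that $\dn B^\sim = \dn B$; all of these are in \cite{WinterZacharias:NucDim}. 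Your closing discussion of why the blockwise upward maps remain order zero is fine but not needed once the statement is reduced to the cited permanence properties; the essential missing idea is the passage to $B^\sim \tens C^\sim$ and to the ideal $J$.
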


\begin{proof}
Noting that $C^*(B \cup C)$ is a quotient of an ideal of $B^\sim \tens C^\sim$ (where $B^\sim$, $C^\sim$ denote the unitizations of $B,C$),
the result holds by \ccite{Proposition 2.3 (ii) and (iv), Proposition 2.5, and Remark 2.11}{WinterZacharias:NucDim}.
\end{proof}

\begin{lemma}
\alabel{ManyMapsCommute}
Let $A$ be a $C^*$-algebra and let $B \subset A_\infty$ be a separable subalgebra of nuclear dimension at most $m$.
Let $d_0,\dots,d_m,e_0,\dots,e_m \in (A_\infty)_+ \cap B'$ and $\gamma > 0$ be such that
\begin{equation}
 \tau(d_i b) \geq \gamma \tau(b)\quad \forall b \in B_+, i=0,\dots,m, \tau \in T^1_\infty(A),
\alabel{ManyMapsCommute-dTraceIneq}
\end{equation}
and
\[ e_id_i = d_i \quad \forall i=0,\dots,m. \]
Let $\dl > 0$, and suppose that either:
\begin{enumerate}
\item \alabel{ManyMapsCommute-abel} $B$ is commutative and $A$ has tracial $\overline m$-almost divisibility, where $\dl=1/(\overline m+1)$; or
\item \alabel{ManyMapsCommute-cent} $A$ satisfies:
for any c.p.c.\ order zero map $\psi:F \to A_\infty$ and any $d \in (A_\infty \cap \psi(F)')_+$, there exists a c.p.c.\ order zero map $\phi:M_k \to A_\infty \cap \psi(F)' \cap \{d\}'$ such that
\[ \tau(\phi(1_k)\psi(x)) \geq \dl\tau(\psi(x)), \quad \forall x \in F_+, \tau \in T^1_\infty(A). \]
\end{enumerate}
Then there exist $\Phi_0,\dots,\Phi_m:M_k \to A_\infty \cap B'$ such that
\begin{equation}
 \tau\left( \sum_{i=0}^m \Phi_i(1_k) b\right) \geq \gamma\dl \tau(b)
\alabel{ManyMapsCommute-FinalTrIneq}
\end{equation}
for all $b \in B_+$ and all $\tau \in T^1_\infty(A)$, and
\[ e_i\Phi_i(x) = \Phi_i(x) \]
for all $x \in M_k$.
\end{lemma}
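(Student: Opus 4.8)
The plan is to verify everything approximately and then pass to an exact statement via the diagonal sequence argument of Section \ref{DiagArgSect}. Indeed, the four requirements on the family $(\Phi_i)$ — that each $\Phi_i$ commute with $B$ (condition \ref{DA-comm}), that it be supported under $e_i$ in the sense $e_i\Phi_i(x)=\Phi_i(x)$ (condition \ref{DA-unit2}), that it be a c.p.c.\ order zero map (condition \ref{DA-ordzero}), and that \eqref{ManyMapsCommute-FinalTrIneq} hold (condition \ref{DA-trace}) — are all admissible. So it suffices, for each finite $\mathcal{F}\subset B$ and each tolerance, to produce maps satisfying these approximately. One first disposes of trivial cases: if $T(A)=\emptyset$ the tracial bound is vacuous (one may take $\Phi_i=0$, cf.\ Remark \ref{DivNonzeroRmk}), so we assume $T^1_\infty(A)\neq\emptyset$.

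The combinatorial heart is the hypothesis $\dn B\le m$, which matches the number $m+1$ of pairs $(d_j,e_j)$. For a given $\mathcal{F}$ and tolerance I would invoke the $(m+1)$-colourable c.p.\ approximation underlying $\dn B\le m$, producing a c.p.c.\ map $\beta:B\to\bigoplus_{j=0}^m F^{(j)}$ and maps $\alpha^{(j)}:F^{(j)}\to A_\infty$, each c.p.c.\ order zero, with $\sum_{j=0}^m \alpha^{(j)}\beta(b)\approx b$ for $b\in\mathcal{F}$; as in the proof of Lemma \ref{DimNucCovering} this can be arranged approximately centrally, so that each $\alpha^{(j)}(F^{(j)})$ lies in $A_\infty\cap B'$ up to the tolerance. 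I assign colour $j$ to the pair $(d_j,e_j)$. The partition-of-unity nature of $\sum_j\alpha^{(j)}\beta\approx\id$ is what prevents an $(m+1)$-fold overcounting in the trace and is the reason the target constant is $\gamma\delta$ rather than $(m+1)\gamma\delta$.

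For each colour $j$ I would use the divisibility hypothesis to extract an $M_k$-order-zero map capturing a $\delta$-fraction of the tracial mass that $d_j$ carries on colour $j$. In case \ref{ManyMapsCommute-cent} this is the stated property applied with $\psi=\alpha^{(j)}$ and $d=d_j$, yielding $\phi_j:M_k\to A_\infty\cap\alpha^{(j)}(F^{(j)})'\cap\{d_j\}'$ with $\tau(\phi_j(1_k)\alpha^{(j)}(x))\ge\delta\,\tau(\alpha^{(j)}(x))$. In case \ref{ManyMapsCommute-abel} I have only the \emph{unweighted} estimate furnished by Lemma \ref{SequenceAlgDiv} applied to $(d_j,e_j)$, namely a map with $\tau(\phi_j(1))\ge\delta\,\tau(d_j)$; here I would exploit that $B$ is commutative to localise, approximating the relevant positive elements of $B$ by finite positive combinations of commuting pieces, applying the divisibility to each $d_j$-cutdown separately, and recombining to recover the $b$-weighted form. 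Cutting $\phi_j$ down by a suitable function of $e_j$, and using $e_j d_j=d_j$, gives a $\Phi_j$ supported under $e_j$; the approximate commutation of $\phi_j$ with the colour-$j$ piece is promoted, through the diagonal sequence argument, to exact commutation with all of $B$ in $A_\infty$.

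The bookkeeping then combines the two gains multiplicatively: the coverage inequality \eqref{ManyMapsCommute-dTraceIneq} supplies a factor $\gamma$ on each colour, the divisibility supplies the factor $\delta$, and summing the colour contributions against the partition $\sum_j\alpha^{(j)}\beta(b)\approx b$ produces $\tau\big(\sum_j\Phi_j(1_k)b\big)\gtrsim\gamma\delta\,\tau(b)$, which is \eqref{ManyMapsCommute-FinalTrIneq} to within the tolerance. I expect the main obstacle to be enforcing the four constraints \emph{simultaneously}: the divisibility hypotheses only guarantee commutation with a single finite-dimensional order-zero image and with one element $d_j$, whereas each $\Phi_i$ must commute with the whole infinite-dimensional algebra $B$, sit exactly under $e_i$, and still be genuinely order zero. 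Reconciling these is exactly what forces the detour through approximate maps and the diagonalisation, and in case \ref{ManyMapsCommute-abel} the additional hurdle is upgrading the unweighted tracial divisibility to the $b$-weighted estimate, which is where commutativity of $B$ is indispensable.
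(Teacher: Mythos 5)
Your overall architecture is the paper's: reduce to approximate statements via the diagonal sequence argument, decompose $B$ into $m+1$ colours matched to the pairs $(d_j,e_j)$, extract a $\delta$-fraction per colour from the divisibility hypothesis, and recombine against the approximate partition of unity to obtain the single factor $\gamma\delta$. Case \ref{ManyMapsCommute-abel} as you describe it (localise over the spectrum of the commutative algebra $B$, apply Lemma \ref{SequenceAlgDiv} to each cut-down $d_i f^{(i)}_j$, recombine the resulting maps, which have orthogonal ranges) is essentially exactly what the paper does.

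In case \ref{ManyMapsCommute-cent}, however, one concrete step fails: you propose to arrange the $(m+1)$-colourable c.p.\ approximation ``approximately centrally, so that each $\alpha^{(j)}(F^{(j)})$ lies in $A_\infty\cap B'$ up to the tolerance.'' That is impossible for noncommutative $B$: if $\sum_j\alpha^{(j)}\beta^{(j)}(b)\approx b$ for all $b\in\mathcal{F}$ and each $\alpha^{(j)}(F^{(j)})$ approximately commutes with $B$, then the elements of $\mathcal{F}$ approximately commute with one another, i.e.\ $B$ would have to be approximately abelian on $\mathcal{F}$. Since this is your only stated mechanism for making $\Phi_j$ commute with all of $B$ --- hypothesis \ref{ManyMapsCommute-cent} only yields commutation with the single finite-dimensional image and with $d_j$, and the diagonal sequence argument can upgrade approximate commutation to exact commutation but cannot manufacture commutation with the rest of $B$ --- the commutation requirement is not established. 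The paper's resolution is different: the downward maps $\alpha^{(j)}$ of the c.p.\ approximation land \emph{inside} $B$, so $d_j\in B'$ commutes with $\alpha^{(j)}(F^{(j)})$; one applies hypothesis \ref{ManyMapsCommute-cent} to the order zero map $d_j\alpha^{(j)}(\cdot)$ (so that the trace inequality comes out weighted by $d_j$, and \eqref{ManyMapsCommute-dTraceIneq} can then be applied to $\alpha^{(j)}\beta^{(j)}(b)\in B_+$ to produce the factor $\gamma$ --- applying it after the fact to $\phi_j(1_k)\alpha^{(j)}\beta^{(j)}(b)$, as your ordering would require, is not licensed by the hypothesis), sets $\Phi_j:=d_j\,\alpha^{(j)}\beta^{(j)}(e)\,\hat\Phi^{(j)}(\cdot)$ for a suitable almost-unit $e\in B_+$ for $\mathcal{F}$, and observes that $\Phi_j(x)b\approx\hat\Phi^{(j)}(x)\,d_j\,\alpha^{(j)}\beta^{(j)}(b)$ is a product of pairwise commuting self-adjoint elements, hence approximately self-adjoint; taking adjoints then bounds $\|[\Phi_j(x),b]\|$. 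This self-adjointness trick is the idea missing from your sketch.
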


\begin{proof}
By the diagonal sequence argument (in Section \ref{DiagArgSect}), it suffices to arrange approximate commutativity and that the trace inequality only approximately holds.
Therefore, let us fix a finite subset $\mathcal{F} \subset B$ and a tolerance $\e > 0$.
Without loss of generality, $\mathcal{F}$ contains only positive elements.

\ref{ManyMapsCommute-abel}
Set $\eta < \min\{\e/(m+1), \e/2\}$.
Let $B \iso C_0(X)$ for some space $X$.
Let $(x_j^{(i)})_{i=0,\dots,m; j=1,\dots,r} \subset X$ and $(f_j^{(i)})_{i=0,\dots,m; j=1,\dots,r}, (g_j^{(i)})_{i=0,\dots,m; j=1,\dots,r} \subset B_+$ be such that:
\begin{equation}
b \approx_\eta \sum_{i,j} b(x_j^{(i)})f^{(i)}_j,
\alabel{ManyMapsCommute-Pf-ApproxPOU}
\end{equation}
for all $b \in F$;
\begin{equation}
 b(x) \approx_\eta b(x_j^{(i)}),
\alabel{ManyMapsCommute-Pf-SmallSupport}
\end{equation}
for all $x \in F$ and $x \in \supp g_j^{(i)}$;
\[ g^{(i)}_jf^{(i)}_j = f^{(i)}_j, \]
for all $i,j$; and for each $i$,
\[ f^{(i)}_1,\dots,f^{(i)}_r \]
are pairwise orthogonal.

By Lemma \ref{SequenceAlgDiv}, let $\Phi^{(i)}_j:M_k \to A_\infty$ be c.p.c.\ order zero maps satisfying
\[ e_ig^{(i)}_j\Phi^{(i)}_j(x) = \Phi^{(i)}_j(x) \]
for all $i,j$ and all $x \in M_k$; and
\begin{equation}
 \tau(\Phi^{(i)}_j(1_k)) \geq \dl \tau(d_if^{(i)}_j).
\alabel{ManyMapsCommute-Pf-TauIneq1}
\end{equation}
Note that \eqref{ManyMapsCommute-Pf-SmallSupport} implies that
\[ zb \approx_\eta b(x^{(i)}_j)z \]
for any contraction $z \in \her{g^{(i)}_j}$, and in particular,
\begin{equation}
 \Phi^{(i)}_j(x)b \approx_\eta b(x^{(i)}_j)\Phi^{(i)}_j(x)
\alabel{ManyMapsCommute-Pf-AlmostConstant}
\end{equation}
for any contraction $x \in M_k$.

Set
\[ \Phi^{(i)} := \sum_{j=1}^r \Phi^{(i)}_j:M_k \to A_\infty. \]
Since $\Phi^{(i)}$ is a sum of c.p.c.\ order zero maps with pairwise orthogonal ranges, it is itself c.p.c.\ order zero.

Let us compute, for $b \in \mathcal{F}$, and any contraction $x \in M_k$
\begin{align}
\notag \Phi^{(i)}(x)b &= \sum_{j=1}^r \Phi^{(i)}_j(x)b \\
&\approx_\eta \sum_{j=1}^r b(x^{(i)}_j)\Phi^{(i)}_j(x);
\alabel{ManyMapsCommute-Pf-AlmostConstantSum} 
\end{align}
for this approximation, we used \eqref{ManyMapsCommute-Pf-AlmostConstant} and the fact that the summands are orthogonal.
To see that the summands are, in fact, orthogonal, note that since $b$ commutes with $g^{(i)}_j$,
\[ b\Phi^{(i)}_j(x) \in \her{g^{(i)}_j} \perp \her{g^{(i)}_{j'}}\quad \text{for } j' \neq j. \]

From \eqref{ManyMapsCommute-Pf-AlmostConstantSum}, we obtain first that 
\[ \left\|\left[b,\Phi^{(i)}(x)\right]\right\| \leq 2\eta < \e. \]
We also find that, for $\tau \in T^1_\infty(A)$,
\begin{align*}
\tau(\Phi^{(i)}(1_k)b ) &\geq \sum_{j=1}^r b(x^{(i)}_j) \tau(\Phi^{(i)}_j(1_k)) - \eta \\
&\geq^{\eqref{ManyMapsCommute-Pf-TauIneq1}} \delta \sum_{j=1}^r b(x^{(i)}_j) \tau(d_if^{(i)}_j) - \eta \\
&\geq^{\eqref{ManyMapsCommute-dTraceIneq}} \gamma\delta \sum_{j=1}^r b(x^{(i)}_j) \tau(f^{(i)}_j) - \eta.
\end{align*}
Therefore,
\begin{align*}
\tau\left( \sum_{i=0}^m \Phi^{(i)}(1_k)b \right)
&\geq \gamma\delta \tau\left( \sum_{i,j} b(x^{(i)}_j) f^{(i)}_j\right) - m\eta \\
&\geq^{\eqref{ManyMapsCommute-Pf-ApproxPOU}} \gamma\delta \tau\left( b \right) - (m+1)\eta, 
\end{align*}
as required.

\ref{ManyMapsCommute-cent}
The proof of \ref{ManyMapsCommute-cent} is quite similar to that of \ref{ManyMapsCommute-abel}.
Set $\eta = \e/(2m+1)$.
By Lemma \ref{GoodDnApproximations}, let $e \in B_+$ be such that $eb \approx_\eta b$ for $b \in \mathcal{F}$, and let
\[ (F = F^{(0)} \dsum \cdots \dsum F^{(m)}, \psi, \phi) \]
be an $(m+1)$-colourable c.p. approximation for $B$ such that
\begin{equation}
\alabel{ManyMapsCommute-Pf-cpApprox}
 b \approx_\eta \alpha\beta(b)
\end{equation}
and
\begin{equation}
 \alpha_i\beta_i(e)b \approx_{2\eta} \alpha_i\beta_i(b)
\alabel{ManyMapsCommute-Pf-ApproxCutdown}
\end{equation}
for $b \in \mathcal{F}$ and $i=0,\dots,m$.

For each $i$, let us apply the hypothesis of \ref{ManyMapsCommute-cent} to the c.p.c.\ order zero map $d_i\alpha_i(\cdot)$ and with $d=d_i$, to obtain 
\[ \hat\Phi^{(i)}:M_k \to A_\infty \cap (d_i\alpha_i(F_i))' \cap \{d_i\}' = A_\infty \cap \alpha_i(F_i)' \cap \{d_i\}' \]
such that
\begin{equation}
 \tau(\hat\Phi^{(i)}(1_k)d_i\alpha_i(x)) \geq \dl\tau(d_i\alpha_i(x)), \quad \forall x \in (F_i)_+, \tau \in T^1_\infty(A).
\alabel{ManyMapsCommute-Pf-TauIneq2}
\end{equation}

Set $\Phi^{(i)} = d_i\alpha_i\beta_i(e)\hat\Phi^{(i)}(\cdot)$, which is c.p.c.\ order zero since $d_i\alpha_i\beta_i(e)$ commutes with the image of $\hat\Phi^{(i)}$.
Then we have, for any contraction $x \in M_k$ and any $b \in \mathcal{F}$,
\begin{align}
\notag
\Phi^{(i)}(x)b &= \hat\Phi^{(i)}(x)d_i\alpha_i\beta_i(e)b \\
\alabel{ManyMapsCommute-Pf-Approx1}
&\approx_{2\eta}^{\eqref{ManyMapsCommute-Pf-ApproxCutdown}} \hat\Phi^{(i)}(x) d_i\alpha_i(\beta_i(b)).
\end{align}

From this we see first that $\|[\Phi^{(i)}(x),b]\| \leq 4\eta$ (since the right-hand side of \eqref{ManyMapsCommute-Pf-Approx1} is self-adjoint).
Secondly, we find that
\begin{align*}
\tau\left(\Phi^{(i)}(x)b\right) &\geq
\tau(\hat\Phi^{(i)}(x) d_i\alpha_i(\beta_i(b))) - 2\eta \\ 
&\geq^{\eqref{ManyMapsCommute-Pf-TauIneq2}} \dl \tau(d_i\alpha_i(\beta_i(b))) - 2\eta \\
&\geq^{\eqref{ManyMapsCommute-dTraceIneq}} \dl \gamma \tau(\alpha_i\beta_i(b))- 2\eta. 
\end{align*}
Summing this over all $i$ gives
\begin{align*}
\tau\left(\sum_{i=0}^m \Phi^{(i)}(x)b\right) &\geq 
\dl \gamma \tau\left(\sum_{i=0}^m \alpha_i\beta_i(b)\right) - 2m\eta \\
&= \dl \gamma \tau\left(\alpha\beta(b)\right) - 2m\eta \\
&\geq_{\eqref{ManyMapsCommute-Pf-cpApprox}} \dl \gamma \tau(b) - (2m+1)\eta,
\end{align*}
as required.
\end{proof}

\begin{lemma}
\alabel{ManyOrthogContractions}
Let $A$ be a $C^*$-algebra and let $B \subseteq A_\infty$ be a separable subalgebra of nuclear dimension at most $m$.
Let $\dl > 0$ such that either condition \ref{ManyMapsCommute-abel} or \ref{ManyMapsCommute-cent} of Lemma \ref{ManyMapsCommute} holds.
Then for $p \geq 0$, there exist orthogonal contractions $d_1,\dots,d_{2^p} \in A_\infty \cap B'$ such that
\[ \tau\left(d_i b\right) \geq \left(\frac{\dl}{(m+2)^2}\right)^p \tau(b) \]
for all $b \in B_+$ and all $\tau \in T^1_\infty(A)$.
\end{lemma}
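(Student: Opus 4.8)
The plan is to argue by induction on $p$, reusing the doubling scheme from the proof of Lemma \ref{ManyOrthogTraceMaintainers}, but now feeding that scheme with the order zero maps supplied by Lemma \ref{ManyMapsCommute} instead of by an abstract hypothesis. For the base case $p=0$ it suffices to take $d_1 \in A_\infty \cap B'$ to be any positive contraction acting as a unit on the separable algebra $B$ (such an element exists by the diagonal sequence argument of Section \ref{DiagArgSect}), since then $d_1 b = b$ and the required inequality is just $\tau(d_1 b) \geq \tau(b)$.

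For the inductive step, suppose $d_1,\dots,d_{2^p} \in A_\infty \cap B'$ are pairwise orthogonal contractions with $\tau(d_j b) \geq (\dl/(m+2)^2)^p \tau(b)$ for all $b \in B_+$ and all $\tau \in T^1_\infty(A)$. I would set $C := C^*(d_1,\dots,d_{2^p})$, a separable commutative subalgebra of $A_\infty \cap B'$ with $\dn C \leq 1$, so that by Lemma \ref{CommutingDimNuc} the algebra $C^*(B \cup C)$ has nuclear dimension at most $2m+1$. Since condition \ref{ManyMapsCommute-abel} or \ref{ManyMapsCommute-cent} of Lemma \ref{ManyMapsCommute} is a hypothesis on $A$ rather than on the chosen subalgebra, I may apply Lemma \ref{ManyMapsCommute} to $C^*(B \cup C)$, using as input data a positive contraction acting as a unit on it (so the input constant is $\gamma = 1$). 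This produces c.p.c.\ order zero maps $\Phi_0,\dots,\Phi_{2m+1}:M_k \to A_\infty \cap C^*(B \cup C)'$ with $e_i\Phi_i = \Phi_i$ and $\tau\big(\sum_i \Phi_i(1_k) b\big) \geq \dl\,\tau(b)$ for all $b \in C^*(B \cup C)_+$.

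Next I would halve exactly as in Lemma \ref{ManyOrthogTraceMaintainers}: put $a := \sum_i \Phi_i(e_{11})$, fix a small $\eta > 0$, let $e$ act as a unit on $C^*(B \cup C) \cup \{a\}$, and set $d' := g_{\eta,2\eta}(a)$ and $d'' := e - g_{0,\eta}(a)$. These are orthogonal contractions commuting with $B \cup C$; the order zero structure of the $\Phi_i$ bounds $\tau(g_{0,\eta}(a)b)$ by $\sum_i \tfrac1k \tau(\Phi_i(1_k)b)$, so for $k$ large enough $d''$ keeps at least half of each $\tau(b)$, while $d'$ inherits a trace density $\geq \dl/(m+2)^2$ from $a$. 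The $2^{p+1}$ products $d'd_j$ and $d''d_j$ are then pairwise orthogonal contractions in $A_\infty \cap B'$, and since $d',d''$ commute with and are tracially large over $C^*(B \cup C) \ni d_j b$, one gets $\tau(d'd_j b) \geq (\dl/(m+2)^2)\,\tau(d_j b) \geq (\dl/(m+2)^2)^{p+1}\tau(b)$, and similarly for $d''d_j$. As all these trace estimates are only approximate, the step is carried out to a tolerance $\e$ and the exact statement is recovered by the diagonal sequence argument (Section \ref{DiagArgSect}, condition \ref{DA-trace}).

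The main obstacle is the constant. Obtaining the clean factor $(m+2)^2$ — rather than the cruder bound one gets by naively combining all $2m+2$ maps on a domain of size $\geq 4m+4$ — forces one to control the norm of the cut-down element $a$: one wants the order zero maps contributing to $a$ to have essentially orthogonal ranges, so that $\|a\| \leq 1$ and the leftover $g_{0,\eta}(a)$ already carries at most half of each trace for a small domain $k$. Arranging this orthogonality while retaining enough tracial mass, and checking that the two pieces $d',d''$ genuinely commute with all of $C$ (hence with every previously built $d_j$, which is what makes the products orthogonal and tracially large), is the delicate part of the bookkeeping; the remaining estimates are routine quantifications absorbed by the diagonal sequence argument.
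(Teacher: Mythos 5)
Your argument is essentially the paper's: the proof given there simply applies Lemma \ref{ManyMapsCommute} to $C^*(B\cup C)$ with $d_i=d$ and $e_i=e$ (so $\gamma=1$, where $d$ acts as a unit on the relevant subalgebra and $e$ on $d$), and then feeds the resulting maps into Lemma \ref{ManyOrthogTraceMaintainers}, whose proof is exactly the doubling induction you have inlined. Two remarks. First, your justification for invoking Lemma \ref{ManyMapsCommute} on $C^*(B\cup C)$ is slightly off: condition \ref{ManyMapsCommute-abel} is not a hypothesis on $A$ alone, since it requires the subalgebra to be commutative; but in that case $B$ itself is commutative and $C$ commutes with it, so $C^*(B\cup C)$ is again commutative and the application goes through anyway. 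Second, the ``main obstacle'' you identify is not actually resolved in the paper either: chaining the two lemmas with $2m+2$ maps into $M_k$, $k\geq 4m+4$, yields the per-step constant $\dl/\big((2m+2)(4m+4)\big)=\dl/\big(8(m+1)^2\big)$ rather than $\dl/(m+2)^2$, and the stated constant appears to be a harmless slip --- everything downstream (Lemma \ref{OneMapTool}, Theorem \ref{OneMap}, Lemma \ref{GeomArg}) uses only that the constant is positive. Your proposed fix, orthogonalizing the ranges of the $\Phi_i$ so that $\|a\|\leq 1$, would require pairwise orthogonal $e_i$'s carrying tracial lower bounds, which is essentially the statement being proved; do not go down that road, and instead simply record whatever positive constant the chaining produces.
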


\begin{proof}
We may find $e,d \in A_\infty$ such that $e$ acts as a unit on $d$ and $d$ acts as a unit on $B$ (since $B$ is separable).
We use Lemma \ref{ManyMapsCommute} with $d_i=d$ and $e_i=e$ for all $i$, then use its conclusion as the hypothesis to Lemma \ref{ManyOrthogTraceMaintainers} to get the conclusion of this lemma.
\end{proof}

\begin{lemma}
\alabel{OneMapTool}
Let $A$ be a $C^*$-algebra and let $B \subseteq A_\infty$ be a separable subalgebra of nuclear dimension at most $m$.
Let $\dl > 0$ such that either condition \ref{ManyMapsCommute-abel} or \ref{ManyMapsCommute-cent} of Lemma \ref{ManyMapsCommute} holds.
Then there exists a c.p.c.\ order zero map $\phi:M_k \to A_\infty \cap B'$ such that
\begin{equation}
 \tau(\phi(1_k)b) \geq \gamma_{m,\dl} \tau(b)
\alabel{OneMapTool-TraceIneq}
\end{equation}
for all $b \in B_+$ and all $\tau \in T^1_\infty(A)$, where $\gamma_{m,\dl}$ is the constant
\begin{equation}
 \gamma_{m,\dl} := \dl\left(\frac\dl{(m+2)^2}\right)^{\lceil \log_2(m+1) \rceil}.
\alabel{OneMapTool-GammaDefn}
\end{equation}
\end{lemma}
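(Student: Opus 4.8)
The plan is to combine Lemmas \ref{ManyOrthogContractions} and \ref{ManyMapsCommute}. Lemma \ref{ManyMapsCommute} produces $m+1$ c.p.c.\ order zero maps $\Phi_0,\dots,\Phi_m:M_k \to A_\infty \cap B'$ whose units have large joint trace, but the sum $\sum_i \Phi_i$ need not itself be order zero. The whole point is to arrange that the \emph{ranges} of the $\Phi_i$ are pairwise orthogonal, so that this sum collapses to a single c.p.c.\ order zero map $\phi$. To force this, I would feed Lemma \ref{ManyMapsCommute} with orthogonal data $d_i,e_i$, the $d_i$ coming from Lemma \ref{ManyOrthogContractions}.

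First I would set $p := \lceil \log_2(m+1) \rceil$ and $\gamma := \left(\frac{\dl}{(m+2)^2}\right)^p$, and apply Lemma \ref{ManyOrthogContractions} to obtain $2^p \geq m+1$ pairwise orthogonal positive contractions in $A_\infty \cap B'$; discarding all but $m+1$ of them gives $d_0,\dots,d_m \in (A_\infty)_+ \cap B'$, pairwise orthogonal, with $\tau(d_i b) \geq \gamma\tau(b)$ for all $b \in B_+$, all $i$, and all $\tau \in T^1_\infty(A)$. The key step is then to produce pairwise orthogonal positive contractions $e_0,\dots,e_m \in (A_\infty)_+ \cap B'$ with $e_i d_i = d_i$. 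This is exactly the delicate point flagged in the Remark of Section \ref{DiagArgSect}: one cannot in general demand a unit for $d_i$ inside its own hereditary subalgebra. Instead I would invoke the diagonal sequence argument, noting that the requirements ``$e_i \geq 0$, $\|e_i\| \leq 1$'', ``$e_i d_i = d_i$'', ``$e_i e_j = 0$ for $i \neq j$'', and ``$[e_i,B]=0$'' are all admissible conditions (types \ref{DA-pos}, \ref{DA-unit1}, a norm condition of the form \eqref{DiagArgCondition}, and \ref{DA-comm} respectively). These are approximately satisfiable: for small $\eta > 0$ the elements $e_i := g_{0,\eta}(d_i)$ are positive contractions in $A_\infty \cap B'$ (being continuous functions of $d_i$) which are \emph{exactly} pairwise orthogonal, since the $d_i$ are orthogonal and $g_{0,\eta}$ vanishes at $0$, and which satisfy $\|e_i d_i - d_i\| \leq \eta$. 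Hence exact such $e_i$ exist in $A_\infty \cap B'$.

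With $B$ of nuclear dimension at most $m$, $\dl$ satisfying condition \ref{ManyMapsCommute-abel} or \ref{ManyMapsCommute-cent}, and the $d_i,e_i$ as above (with constant $\gamma$), I would apply Lemma \ref{ManyMapsCommute} to obtain c.p.c.\ order zero maps $\Phi_0,\dots,\Phi_m:M_k \to A_\infty \cap B'$ with $e_i\Phi_i(x)=\Phi_i(x)$ for all $x \in M_k$ and
\[ \tau\left( \sum_{i=0}^m \Phi_i(1_k)\, b \right) \geq \gamma\dl\,\tau(b) = \gamma_{m,\dl}\,\tau(b) \]
for all $b \in B_+$ and $\tau \in T^1_\infty(A)$. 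I would then set $\phi := \sum_{i=0}^m \Phi_i$. Since each $\Phi_i$ is order zero, hence self-adjoint, the relation $e_i\Phi_i(x)=\Phi_i(x)$ also gives $\Phi_i(x)e_i = \Phi_i(x)$; as the $e_i$ are orthogonal, for $i \neq j$ we get $\Phi_i(x)\Phi_j(y) = \Phi_i(x)e_ie_j\Phi_j(y) = 0$, so the ranges of the $\Phi_i$ are pairwise orthogonal. A sum of c.p.c.\ order zero maps with pairwise orthogonal ranges is again c.p.c.\ order zero: complete positivity is clear, contractivity holds because $\phi(1_k) = \sum_i \Phi_i(1_k)$ is an orthogonal sum of positive contractions, and orthogonality of the ranges annihilates the cross terms in the order zero identity. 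Thus $\phi:M_k \to A_\infty \cap B'$ satisfies $\tau(\phi(1_k)b) \geq \gamma_{m,\dl}\,\tau(b)$, as required.

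The main obstacle is precisely the construction of the orthogonal units $e_i$. The naive choice of a unit living in the hereditary subalgebra of $d_i$ is unavailable by the Remark in Section \ref{DiagArgSect}, so the diagonal sequence argument must be used to manufacture units that are simultaneously orthogonal and act as units on the $d_i$; and it is this orthogonality of the $e_i$ — rather than the mere fact that they are units — that upgrades the family $\{\Phi_i\}$ furnished by Lemma \ref{ManyMapsCommute} into the single c.p.c.\ order zero map demanded by the statement.
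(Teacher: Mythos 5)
Your proof is correct and follows essentially the same route as the paper's: Lemma \ref{ManyOrthogContractions} supplies the orthogonal $d_i$, pairwise orthogonal units $e_i$ are manufactured for them, and Lemma \ref{ManyMapsCommute} together with the orthogonality of the $e_i$ (hence of the ranges of the $\Phi_i$) collapses the sum into a single c.p.c.\ order zero map with the constant $\gamma\dl = \gamma_{m,\dl}$. The only, harmless, divergence is in how the exact units are obtained: the paper replaces $\overline{d}_i$ by $(\overline{d}_i-\eta)_+$ so that $g_{0,\eta}(\overline{d}_i)$ is an exact unit, and absorbs the resulting $\eta$-loss in the trace bound into a diagonal sequence argument applied to the final inequality (proving it for every $\gamma' < \gamma_{m,\dl}$), whereas you keep the $d_i$ intact and run the diagonal sequence argument on the units themselves --- both are legitimate uses of the machinery of Section \ref{DiagArgSect}.
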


\begin{proof}
By a diagonal sequence argument (see Section \ref{DiagArgSect}), it suffices to show that \eqref{OneMapTool-TraceIneq} holds for any $\gamma' < \gamma_{m,\dl}$ in place of $\gamma_{m,\dl}$.
Given such a $\gamma'$, set 
\[ \eta := \frac{\gamma_{m,\dl}-\gamma'}\dl > 0. \]

By Lemma \ref{ManyOrthogContractions}, there exist orthogonal positive contractions $\overline d_0,\dots,\overline d_m \in A_\infty \cap B'$ such that
\[ \tau(\overline d_ib) \geq \gamma_{m,\dl}\tau(b) \]
for all $b \in B_+,i=0,\dots,m$ and all $\tau \in T^1_\infty(A)$.
Let $d_i = (\overline d_i-\eta)_+$ and $e_i = g_{0,\eta}(d_i)$, so that $e_id_i = d_i$.
We then have that
\begin{align*}
 \tau(d_ib) &\geq \tau((\overline{d_i} - \eta)b) \\
&\geq \left(\gamma_{m,\dl}-\eta\right) \tau(b) \\
&= \frac{\gamma'}\dl \tau(b).
\end{align*}
for all $b \in B_+,i=0,\dots,m$ and all $\tau \in T^1_\infty(A)$.

We may then use these with Lemma \ref{ManyMapsCommute}, with $\dl$ as already provided and with
\[ \gamma = \frac{\gamma'}\dl, \]
to get c.p.c.\ order zero maps $\phi_i:M_k \to A_\infty \cap B'$ for $i=0,\dots,m$ such that
\[ \textstyle{\tau\left(\sum_{i=0}^m \phi_i(1_k)b \right) \geq \gamma' \tau(b)} \]
for all $b \in B_+,i=0,\dots,m$ and all $\tau \in T^1_\infty(A)$, and such that $e_i\phi_i(x) = \phi_i(x)$ for all $x \in M_k$.

Since $e_0,\dots,e_m$ are orthogonal, it follows that $\phi_0,\dots,\phi_m$ have orthogonal ranges.
Thus, 
\[ \phi := \sum_{i=0}^m \phi_i \]
is itself c.p.c.\ order zero.
\end{proof}

The following builds on an argument appearing in the proof of \ccite{Lemma 5.11}{Winter:pure}.

\begin{lemma}
\alabel{GeomArg}
Let $A$ be a separable, algebraically simple $C^*$-algebra.
For a set $\mathcal{B}$ of subalgebras of $A_\infty$ and $k \in \N$, set $\beta_{\mathcal{B},k}$ to be the maximum of $\beta > 0$ such that, for all $B \in \mathcal{B}$, there exists a c.p.c.\ order zero map $\phi:M_k \to A_\infty \cap B'$ such that
\begin{equation}
\alabel{GeomArg-TrIneq}
\tau(\phi(1_k)b) \geq \beta\tau(b) \quad \forall b \in B_+.
\end{equation}
Let $\mathcal{B}$ be a set of subalgebras of $A_\infty$ and let $\mathcal{B}'$ consist of all subalgebras of $A_\infty$ of the form $C^*(B \cup C)$ where $B \in \mathcal{B}$, $C$ is the image of an order zero map from a finite dimensional $C^*$-algebra, and $[B,C]=0$.
Then for $k \in \N$, either
\[ \beta_{\mathcal{B}',k} = 0 \quad \text{or} \quad\beta_{\mathcal{B},k}=1. \]
\end{lemma}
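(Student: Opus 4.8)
The plan is to prove the dichotomy by a self-improvement (geometric) argument: assuming $\beta_{\mathcal{B}',k}>0$, I would show that
\[ \beta_{\mathcal{B},k} \geq \beta_{\mathcal{B},k} + \beta_{\mathcal{B}',k}\,(1 - \beta_{\mathcal{B},k}), \]
which, since $\beta_{\mathcal{B}',k}>0$, forces $\beta_{\mathcal{B},k}=1$. First note that taking the finite dimensional algebra to be $C=0$ exhibits $\mathcal{B}\subseteq\mathcal{B}'$, so $\beta_{\mathcal{B}',k}\leq\beta_{\mathcal{B},k}$; thus $\beta_0:=\beta_{\mathcal{B}',k}>0$ gives $\beta_1:=\beta_{\mathcal{B},k}\geq\beta_0>0$, while $\beta_1\leq 1$ because $\tau(\phi(1_k)b)=\tau(b^{1/2}\phi(1_k)b^{1/2})\leq\tau(b)$ always. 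Fix $B\in\mathcal{B}$ and a tolerance $\e>0$. By definition of $\beta_1$ there is a c.p.c.\ order zero map $\phi\colon M_k\to A_\infty\cap B'$ with $h:=\phi(1_k)$ and $\tau(hb)\geq(\beta_1-\e)\tau(b)$ for all $b\in B_+$, $\tau\in T^1_\infty(A)$. Since $\beta_1>0$ we may take $\phi\neq 0$, so $C:=\phi(M_k)$ is a nonzero order zero image of $M_k$ commuting with $B$, whence $B_1:=C^*(B\cup C)\in\mathcal{B}'$; applying the definition of $\beta_0$ to $B_1$ yields $\phi'\colon M_k\to A_\infty\cap B_1'$ with $h':=\phi'(1_k)$ and $\tau(h'c)\geq(\beta_0-\e)\tau(c)$ for all $c\in(B_1)_+$.

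The crux is to combine $\phi$ and $\phi'$ into a single c.p.c.\ order zero map $\Phi\colon M_k\to A_\infty\cap B'$ whose unit is (approximately) $h+(1-h)h'$; note that $\Phi$ is then a legitimate competitor for $B$ because its range lies in $C^*(\phi(M_k)\cup\phi'(M_k))\subseteq A_\infty\cap B'$. Granting such a $\Phi$ exactly for the model computation, the trace estimate closes the recursion: since $h$ and $h'$ commute with each other and with $B$, for $b\in B_+$ the element $(1-h)b=(1-h)^{1/2}b(1-h)^{1/2}$ lies in $(B_1)_+$, so
\[ \tau(\Phi(1_k)b) = \tau(hb) + \tau\big(h'(1-h)b\big) \geq \tau(hb) + (\beta_0-\e)\,\tau\big((1-h)b\big), \]
and substituting $\tau((1-h)b)=\tau(b)-\tau(hb)$ together with $\tau(hb)\geq(\beta_1-\e)\tau(b)$ gives $\tau(\Phi(1_k)b)\geq\big(\beta_1+\beta_0(1-\beta_1)-O(\e)\big)\tau(b)$. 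As $B\in\mathcal{B}$ and $\e>0$ are arbitrary, this yields $\beta_1\geq\beta_1+\beta_0(1-\beta_1)$, hence $\beta_1=1$. The same conclusion survives if $\Phi(1_k)$ only approximates $h+(1-h)h'$, since the estimate depends on the traces continuously.

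Constructing $\Phi$ is the step I expect to be the main obstacle. When $h$ is a \emph{projection} $p$ the construction is immediate and exact: then $\phi$ is a $*$-homomorphism, $(1-p)\phi'$ has range orthogonal to $\phi(M_k)$ (as $(1-p)\phi(x)=0$), and $\Phi:=\phi+(1-p)\phi'$ is a sum of order zero maps with orthogonal ranges, hence c.p.c.\ order zero, with $\Phi(1_k)=p+(1-p)h'$. This is precisely the mechanism available in the unital theory. For a general positive contraction $h$ --- the situation forced upon us by stably projectionless algebras, where $A_\infty$ need not contain the relevant projections --- I would instead realize $\phi$ through its supporting $*$-homomorphism $\hat\phi\colon C_0((0,1],M_k)\to A_\infty$ from Proposition \ref{OrderZeroStructure}, and exploit that $h'$ commutes with the entire range of $\hat\phi$ in order to splice the coverage of $\phi'$ into the region where $h<1$. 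Because $h+(1-h)h'$ cannot in general be attained exactly by an order zero map out of a single copy of $M_k$ (the supporting homomorphisms of $\phi$ and $\phi'$ need not agree, and $h$ may have spectrum accumulating at $0$), I expect to build $\Phi$ only up to arbitrarily small error, working with the cut-downs $g_{0,\dl}(h)$ and the commuting functional calculus of the pair $h,h'$, and to absorb the error via the diagonal sequence argument of Section \ref{DiagArgSect}. By the robustness of the estimate above, this completes the recursion and hence the proof.
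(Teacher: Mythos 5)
Your global strategy is the right one and is the same as the paper's: assume $\beta_{\mathcal{B}',k}>0$ and run a self-improvement recursion on $\eta:=1-\beta_{\mathcal{B},k}$. (Your claimed improvement factor $\beta_0$ is even stronger than the paper's, which only gets $\beta_{\mathcal{B}',k}^2/k$; either would close the argument.) But the step you correctly identify as the main obstacle --- producing a \emph{single} c.p.c.\ order zero map $\Phi:M_k\to A_\infty\cap B'$ with $\Phi(1_k)\approx h+(1-h)h'$ --- is where the proposal has a genuine gap, and your sketched repair does not work. The naive candidate $\Phi=\phi+(1-h)\phi'$ fails to be order zero because the cross terms $\phi(e_{11})(1-h)\phi'(e_{22})$ do not vanish when $h$ is not a projection. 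Any attempt to separate the two summands using functional calculus of $h$ alone faces a dichotomy: if the cut-down functions have overlapping supports, the ranges are still not orthogonal and order zero fails; if they have disjoint supports, there is a spectral gap $\{h\in(\e,2\e)\}$ (say) whose trace mass $\tau(w(h)b)$ is not small --- it depends on the spectral distribution of $h$, which you do not control, and it cannot be made uniformly small over all $\tau\in T^1_\infty(A)$ and $b\in B_+$ by choosing $\e$ adaptively. So ``build $\Phi$ up to arbitrarily small error via cut-downs $g_{0,\dl}(h)$ and commuting functional calculus'' is not a proof outline; it is precisely the point at which a new idea is required, and the estimate $\beta_1\geq\beta_1+\beta_0(1-\beta_1)$ is never established.

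The paper's resolution is structurally different and worth comparing. It does not aim for $h+(1-h)h'$ at all. Instead it invokes the $\mathcal{B}'$-hypothesis twice more: once on $C^*(B\cup\phi(M_k))$ to obtain $\rho$, whose values $d_1:=\rho(e_{11})$, $d_2:=\rho(e_{22})$ are \emph{orthogonal} positive contractions commuting with $B$ and with $\phi(M_k)$ (each carrying trace at least $\beta_0/k$), and once on $C^*(B\cup\{d_1h_1(h)\})$ to obtain $\psi$. With the three-piece partition $h_1+h_2+h_3=1$, $h_1=1-g_{0,\e}$, $h_2=g_{0,\e}-g_{\e,2\e}$, $h_3=g_{\e,2\e}$, it sets $\Phi:=d_1h_1(h)\psi(\cdot)+d_2h_2(\phi)(\cdot)+h_3(\phi)(\cdot)$. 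Orthogonality of the first term to the second comes from $d_1\perp d_2$ --- not from functions of $h$, which is exactly how the transition region $\{0<h<2\e\}$ is handled --- and to the third from $h_1h_3=0$; the sum of the second and third terms is order zero because it is c.p.\ and dominated by the single order zero map $g_{0,\e}(\phi)$. The price of this device is the weaker constant $\beta_0^2/k$ in the recursion, which still forces $\eta=0$. Without this (or an equivalent) mechanism for gluing the two matrix structures across the region where $h$ has intermediate spectrum, your proof does not go through.
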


\begin{proof}
First, we note that $\beta_{\mathcal{B},k}$ is truly a maximum, by a diagonal sequence argument (Section \ref{DiagArgSect}).
Suppose that $\beta_{\mathcal{B}',k} \neq 0$.

Set $\eta := 1-\beta_{\mathcal{B},k} \geq 1-\beta_{\mathcal{B}',k} \geq 0$.
We shall show that
\begin{equation}
 \eta \leq \left(1-\frac{\beta_{\mathcal{B}',2}^2}k\right)\eta + \e,
\alabel{GeomArg-RedStep}
\end{equation}
for any $\e > 0$, from which it easily follows that $\eta = 0$.

To this end, let $\e > 0$ and let $B \in \mathcal{B}$.
Let $\phi:M_k \to A_\infty \cap B'$ be c.p.c.\ order zero such that \eqref{GeomArg-TrIneq} holds with $\beta=\beta_{\mathcal{B},k}$.
$C^*(B \cup \phi(M_k)) \in \mathcal{B}'$, so that there exists $\rho:M_k \to A_\infty \cap (B \cup \phi(M_k))'$ such that
\[ \tau(\rho(1_k)b) \geq \beta_{\mathcal{B}',k}\tau(b) \]
for all $b \in C^*(B \cup \phi(M_k))_+$.
Set $d_i = \rho(e_{ii})$ for $i=1,2$, so that $d_1,d_2 \in A_\infty \cap (B \cup \phi(M_k))'$ are orthogonal positive contractions satisfying
\begin{equation}
 \tau(d_ib) \geq \frac{\beta_{\mathcal{B}',k}}k\tau(b)
\alabel{GeomArg-dTraceIneq}
\end{equation}
for all $b \in C^*(B \cup \phi(M_k))_+$.

Set
\begin{equation}
h_1 := 1-g_{0,\e},\  
h_2 := g_{0,\e} - g_{\e,2\e},\ 
h_3 := g_{\e,2\e} \in C([0,1]),
\alabel{GeomArg-hDefn}
\end{equation}
which of course satisfy
\begin{equation}
 h_1+h_2+h_3 = 1.
\alabel{GeomArg-hSum1}
\end{equation}

Since $C^*(B \cup \{d_1h_1(\phi(1_k))\}) \in \mathcal{B'}$, there exists $\psi:M_k \to A_\infty \cap (B \cup  \{d_1h_1(\phi(1_k))\})'$ such that
\begin{equation}
\tau(\psi(1_k)b) \geq \beta_{\mathcal{B}',k}\tau(b),
\alabel{GeomArg-psiTraceIneq}
\end{equation}
for all $b \in C^*(B \cup  \{d_1h_1(\phi(1_k))\})_+$.

Now, notice that $d_2h_2(\phi)(\cdot) + h_3(\phi)(\cdot)$ is c.p.c.\ order zero, since it is c.p.\ and dominated by $g_{0,\e}(\phi)$.
Also, it is orthogonal to the c.p.c.\ order zero map $d_1h_1(\phi(1_k))\psi(\cdot)$.
Thus,
\[ \Phi := d_1h_1(\phi(1_k))\psi(\cdot) + d_2h_2(\phi)(\cdot) + h_3(\phi)(\cdot):M_k \to A_\infty \cap B' \]
is a c.p.c.\ order zero map.
Moreover, for $b \in B_+$ and $\tau \in T^1_\infty(A)$, we have
\begin{align*}
\tau(\Phi(1_k)b) &= \tau(d_1h_1(\phi(1_k))\psi(1_k)b + d_2h_2(\phi(1_k))b + h_3(\phi(1_k))b) \\
&\geq^{\eqref{GeomArg-dTraceIneq}, \eqref{GeomArg-psiTraceIneq}} \tau\left(\frac{\beta_{\mathcal{B'},k}^2}k h_1(\phi(1_k))b + \frac{\beta_{\mathcal{B}',k}}{k}h_2(\phi(1_k))b + h_3(\phi(1_k))b\right) \\
&\geq^{\eqref{GeomArg-hSum1}} \frac{\beta_{\mathcal{B'},k}^2}k \tau(b) + \left(1-\frac{\beta_{\mathcal{B'},k}^2}k\right)\tau(h_3(\phi(1_k))b) \\
&\geq^{\eqref{GeomArg-hDefn}} \frac{\beta_{\mathcal{B'},k}^2}k \tau(b) + \left(1-\frac{\beta_{\mathcal{B'},k}^2}k\right)\tau((\phi(1_k) - \e)b) \\
&\geq^{\eqref{GeomArg-TrIneq}} \left(\frac{\beta_{\mathcal{B'},k}^2}k + \left(1-\frac{\beta_{\mathcal{B'},k}^2}k\right)(\beta_{\mathcal{B'},k} - \e)\right)\tau(b) \\
&\geq \left(1 - \left(\left(1 - \frac{\beta_{\mathcal{B'},k}^2}k\right)\eta + \e\right)\right)\tau(b).
\end{align*}
Thus, $\Phi$ witnesses \eqref{GeomArg-RedStep} (for the given $B$), as required.
\end{proof}

\begin{thm}
\alabel{OneMap}
Let $A$ be a separable, algebraically simple, exact $C^*$-algebra with tracial $m$-almost-divisibility.
Let $B \subset A_\infty$ a separable subalgebra with nuclear dimension at most $\tilde m$, and let $k \in \mathbb{N}$.
Then there exists a c.p.c.\ order zero map $\Phi:M_k \to A_\infty \cap B'$ such that
\begin{equation}
\alabel{OneMap-TraceIneq}
\tau(\Phi(1_k)b) = \tau(b)
\end{equation}
for all $b \in B_+$ and all $\tau \in T^1_\infty(A)$.
\end{thm}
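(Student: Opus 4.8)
The plan is to deduce the statement from the two bootstrapping tools already assembled, Lemma \ref{OneMapTool} and Lemma \ref{GeomArg}. Fix $k$, let $\mathcal{B}$ denote the set of all separable subalgebras of $A_\infty$ of nuclear dimension at most $\tilde m$, and let $\beta_{\mathcal{B},k}$ be the optimal divisibility constant defined in Lemma \ref{GeomArg}. First I would observe that the conclusion amounts to $\beta_{\mathcal{B},k}=1$: since $\beta_{\mathcal{B},k}$ is attained (by the diagonal sequence argument recorded at the start of the proof of Lemma \ref{GeomArg}), the value $1$ produces, for the given $B$, a c.p.c.\ order zero map $\Phi:M_k \to A_\infty \cap B'$ with $\tau(\Phi(1_k)b) \geq \tau(b)$ for all $b \in B_+$; the reverse inequality $\tau(\Phi(1_k)b) = \tau(b^{1/2}\Phi(1_k)b^{1/2}) \leq \tau(b)$ holds automatically because $\Phi(1_k) \leq 1$ and $\tau$ is a trace, which yields the desired equality \eqref{OneMap-TraceIneq}.

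The engine is Lemma \ref{GeomArg}. Its enlargement $\mathcal{B}'$ adjoins to each $B \in \mathcal{B}$ a commuting image $C$ of an order zero map out of a finite dimensional algebra; since $\dn C = 0$, Lemma \ref{CommutingDimNuc} gives $\dn C^*(B \cup C) \leq (\tilde m+1)(0+1)-1 = \tilde m$, so $\mathcal{B}' \subseteq \mathcal{B}$ and hence $\beta_{\mathcal{B}',k} \geq \beta_{\mathcal{B},k}$. Lemma \ref{GeomArg} then forces either $\beta_{\mathcal{B}',k}=0$ or $\beta_{\mathcal{B},k}=1$. Consequently it is enough to prove the single assertion $\beta_{\mathcal{B},k} > 0$, i.e.\ to exhibit one positive divisibility constant uniform over all $B \in \mathcal{B}$; the geometric argument of Lemma \ref{GeomArg} then upgrades it to the exact value $1$.

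For the uniform positive constant I would invoke Lemma \ref{OneMapTool}, which yields $\beta_{\mathcal{B},k} \geq \gamma_{\tilde m,\dl} > 0$ as soon as $A$ is shown to satisfy hypothesis \ref{ManyMapsCommute-cent} of Lemma \ref{ManyMapsCommute} for some fixed $\dl > 0$ (hypothesis \ref{ManyMapsCommute-abel} is unavailable, since a general $B \in \mathcal{B}$ need not be commutative). Thus the crux is to verify \ref{ManyMapsCommute-cent}: given a c.p.c.\ order zero map $\psi:F \to A_\infty$ with $F$ finite dimensional and $d \in (A_\infty \cap \psi(F)')_+$, one must produce a c.p.c.\ order zero map $\phi:M_k \to A_\infty \cap \psi(F)' \cap \{d\}'$ with $\tau(\phi(1_k)\psi(x)) \geq \dl\,\tau(\psi(x))$ for all $x \in F_+$ and $\tau \in T^1_\infty(A)$. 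Here I would use that $A$ is tracially $m$-almost divisible, which is exactly hypothesis \ref{ManyMapsCommute-abel} with $\dl = 1/(m+1)$: writing $F = \bigoplus_i M_{n_i}$ and $h_i = \psi(1_{F_i})$, the order zero structure theorem (Proposition \ref{OrderZeroStructure}) shows each $h_i$ is central in $C^*(\psi(F))$, so $C^*(\{h_i\}_i \cup \{d\})$ is commutative; applying the commutative divisibility to this algebra gives a map with the trace bound against each $h_i$, and traciality together with the block structure of $\psi$ (for each block, $x \mapsto \tau(\phi(1_k)\psi(x))$ is a tracial functional on $M_{n_i}$, hence proportional to the normalized trace) promotes this to the bound against every $\psi(x)$, $x \in F_+$. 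Commutation with the \emph{whole} image $\psi(F)$ would be installed through the diagonal sequence argument of Section \ref{DiagArgSect}, since commuting with the finitely generated $\psi(F)$ is an admissible condition, provided it can first be achieved approximately.

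The step I expect to be the main obstacle is precisely this last point in the verification of \ref{ManyMapsCommute-cent}: the commutative divisibility coming from tracial $m$-almost divisibility only controls the central elements $h_i$ and produces maps commuting with the abelian algebra $C^*(\{h_i\}_i \cup \{d\})$, whereas \ref{ManyMapsCommute-cent} demands a map commuting with the full (noncommutative) matrix blocks of $\psi(F)$ while retaining the trace estimate. Reconciling these — realizing the divisibility map inside the relative commutant $A_\infty \cap \psi(F)'$, which is tractable because $\psi(F)$ is finite dimensional and $\psi$ lifts to honest order zero maps into $A$, and then recovering the estimate for all of $F_+$ via traciality — is the delicate part; everything else is a bookkeeping assembly of Lemmas \ref{CommutingDimNuc}, \ref{OneMapTool}, and \ref{GeomArg}.
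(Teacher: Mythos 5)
Your skeleton coincides with the paper's: reduce \eqref{OneMap-TraceIneq} to the one-sided inequality, use the dichotomy of Lemma \ref{GeomArg} to reduce everything to $\beta_{\mathcal{B},k}>0$, and obtain that positive constant from Lemma \ref{OneMapTool} once hypothesis \ref{ManyMapsCommute-cent} of Lemma \ref{ManyMapsCommute} is verified. But the proposal stops exactly where the real work begins: you flag the verification of \ref{ManyMapsCommute-cent} --- producing a map commuting with the \emph{noncommutative} image $\psi(F)$, not merely with the commutative algebra generated by $d$ and the central elements $\psi(1_{F_i})$ --- as ``the delicate part'' without supplying any mechanism. The diagonal sequence argument cannot close this gap: it only converts approximate commutation into exact commutation, and your construction provides no approximate commutation with the off-diagonal elements $\psi(e_{ij})$, $i\neq j$, to begin with. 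Likewise, the assertion that $x\mapsto\tau(\phi(1_k)\psi(x))$ is tracial on each block already presupposes the commutation you are trying to establish. The missing idea in the paper is an intermediate strengthening of the commutative statement ($*$) to algebras of the form $\bigoplus_i C_0(X_i,M_{n_i})$: apply the commutative case to the corner $C_0(X_1)\tens e_{11}$ to obtain $\psi_0:M_k\to A_\infty\cap(C_0(X_1)\tens e_{11})'$, then conjugate by matrix units, setting $\phi(x):=\sum_{i}(a\tens e_{i1})\psi_0(x)(a\tens e_{1i})$ for a suitable positive contraction $a\in C_0(X_1)$. This is a sum of order zero maps with orthogonal ranges (hence order zero), commutes with every $c\tens e_{ij}$ by a direct matrix-unit computation, and retains the trace bound on all of $C_+$ by diagonalizing positive matrices in $M_{n_1}$ and using that $\tau(\phi(1_k)(c\tens e_{ii}))$ is independent of $i$. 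Only after this strengthening (and its extension to several summands via compactly supported cut-downs) does one observe that $C^*(\psi(F)\cup\{d\})$ has this subhomogeneous form with nuclear dimension at most $2$, which yields \ref{ManyMapsCommute-cent} with $\dl=\gamma_{2,1/(\tilde m+1)}$.

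A smaller but genuine slip: the image of an order zero map out of a finite dimensional algebra is a quotient of the cone $C_0((0,1],F)$ and has nuclear dimension at most $1$, not $0$, so Lemma \ref{CommutingDimNuc} gives $\mathcal{B}_{\tilde m}'\subseteq\mathcal{B}_{2\tilde m+1}$ rather than $\mathcal{B}_{\tilde m}'\subseteq\mathcal{B}_{\tilde m}$. This is harmless in the end --- one proves $\beta_{\mathcal{B}_{\tilde m},k}>0$ for every $\tilde m$, which the positivity argument delivers uniformly --- but the inclusion $\mathcal{B}'\subseteq\mathcal{B}$ as you state it is false, and the reduction should be phrased accordingly.
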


\begin{proof}
Let $\mathcal{B}_{\tilde m}$ be the set of all separable subalgebras of $A_\infty$ of nuclear dimension at most $\tilde m$.
Defining $\mathcal{B}_{\tilde m}'$ as in Lemma \ref{GeomArg}, we note that by Lemma \ref{CommutingDimNuc}, we have $\mathcal{B}_{\tilde m}' \subseteq \mathcal{B}_{2\tilde m +1}$.
Thus, by Lemma \ref{GeomArg}, it suffices to show that
\[ \beta_{\mathcal{B}_{\tilde m},k} > 0 \]
for all $k$ and $\tilde m$.

$A$ satisfies the hypothesis of Lemma \ref{ManyMapsCommute} \ref{ManyMapsCommute-abel}, with $\dl = 1/(\tilde{m}+1)$, and therefore Lemma \ref{OneMapTool} shows that:

\begin{enumerate}
\item[($*$)] For any separable commutative algebra $C \subset A_\infty$ of nuclear dimension at most $\ell$, there exists a c.p.c.\ order zero map $\phi:M_k \to A_\infty \cap C'$ such that
\[ \tau(\phi(1_k)c) \geq \gamma_{\ell,1/(\tilde{m}+1)} \tau(c) \]
for all $c \in C_+$ and all $\tau \in T^1_\infty(A)$.
\end{enumerate}

We wish to show that hypothesis \ref{ManyMapsCommute-cent} of Lemma \ref{ManyMapsCommute} holds.
This hypothesis shall follow (as will be explained) from strengthening ($*$), so that instead of being commutative, $C$ is allowed to be of the form
\begin{equation}
\alabel{SemiHomog}
 C = \bigdsum_{i=1}^r C_0(X_i,M_{n_i})
\end{equation}
(for some spaces $X_i$).

(As was pointed out to the author by Stuart White, Lemma \ref{GeomArg} shows that this strengthening already implies that $\gamma_{\ell,1/(\tilde m + 1)} = 1$, though of course, this also follows once this entire theorem is proven).

Let us first consider the case that $C$ has the form \eqref{SemiHomog}, except with $r=1$.
Moreover, given $e \in (A_\infty)_+$ satisfying $ec = c$ for all $c \in C$, we will arrange that
\[ e\phi(x) = \phi(x) \]
for all $x \in M_k$.

For a finite subset $\mathcal{F} \subset C_0(X_1)_+$ and a tolerance $\e > 0$, we will construct a c.p.c.\ order zero map $\phi:M_k \to A_\infty \cap C'$ such that $e\phi(x) = \phi(x)$ for all $x \in M_k$ and 
\begin{equation}
\alabel{OneMap-Pf-TraceIneq1}
\tau\left( \phi(1_k)c \tens a\right) \geq \gamma_{\ell,1/(\tilde{m}+1)} \tau(c \tens a) - \e,
\end{equation}
for any $c \in F$ and $a \in M_{n_1}$.
From this, it follows by a diagonal sequence argument (see Section \ref{DiagArgSect}) that there exists $\phi:M_k \to A_\infty \cap C'$ c.p.c.\ order zero such that $e\phi(x) = \phi(x)$ for all $x \in M_k$ and
\begin{equation}
\alabel{OneMap-Pf-TraceIneq2}
 \tau\left( \phi(1_k)d\right) \geq \gamma_{\ell,1/(\tilde{m}+1)} \tau(d),
\end{equation}
for any $d = \sum_{i=1}^p c_i \tens a_i$, where $c_1,\dots,c_p \in C_0(X_1)_+$ and $a_1,\dots,a_k \in (M_{n_1})_+$.
Since $C_0(X_1,M_{n_1}) \iso C_0(X_1) \otimes M_{n_1}$, we know that any $d \in C$ can be approximated by elements of the form just given, except that the tensor factors needn't be positive.
However, the proof that $C_0(X_1,M_{n_1}) \iso C_0(X_1) \otimes M_{n_1}$ shows that if $d \in C_+$ then it can be approximated by sums $\sum_{i=1}^p c_i \tens a_i$, where $c_1,\dots,c_p \in C_0(X_1)_+$ and $a_1,\dots,a_k \in (M_{n_1})_+$ (of course, this does not hold for tensor products of two noncommutative $C^*$-algebras).
By continuity, it will follow that \eqref{OneMap-Pf-TraceIneq2} holds for any $d \in C_+$.
%

Therefore, let us fix $\mathcal{F} \subset C_0(X_1)_+$ finite and $\e >0$, and proceed to construct $\phi$.
($*$) provides us with $\psi:M_k \to A_\infty \cap (C_0(X_1) \tens e_{11})'$ such that
\begin{equation}
 \tau(\psi(1_k)c \tens e_{11}) \geq \gamma_{\ell,1/(\tilde{m}+1)} \tau(c \tens e_{11})
\alabel{Fact2-TauIneq}
\end{equation}
Let $a \in C_0(X_1)$ be a positive contraction such that
\begin{equation}
 aca \approx_{\e/{n_1}} c
\alabel{Fact2-ApproxId}
\end{equation}
 for all $c \in \mathcal{F}$.
Then define $\phi:M_k \to A_\infty$ by
\[ \phi(x) := \sum_{i=1}^{n_1} (a \tens e_{i1})\psi(x)(a \tens e_{1i}). \]
Since $e$ acts as a unit on $C$, $e\phi(x) = \phi(x)$ for all $x \in M_k$, and since $\phi$ is defined as a sum of c.p.c.\ order zero maps with orthogonal images, it is c.p.c.\ order zero.
To see that the image of $\phi$ commutes with $C = \Span \{c \tens e_{ij}: c \in C_0(X_1), i,j =1,\dots,n_1\}$, we compute
\begin{align*}
\phi(x)(c \tens e_{ij}) &= \sum_{k=1}^{n_1} (a \tens e_{k1})\psi(x)(a \tens e_{1k})(c \tens e_{ij}) \\
&= (a \tens e_{i1})\psi(x)(ca \tens e_{1j}) \\
&= (a \tens e_{i1})\psi(x)(c \tens e_{11})(a \tens e_{1j}) \\
&= (ac \tens e_{i1})\psi(x)(a \tens e_{1j}) \\
&= (c \tens e_{ij})\phi(x),
\end{align*}
where on the fourth line, we have used the fact that $\psi(x)$ commutes with $c \tens e_{11}$.

To verify the trace inequality \eqref{OneMap-Pf-TraceIneq1}, let $a \in (M_{n_1})_+, c \in \mathcal{F},$ and $\tau \in T^1_\infty(A)$.
If $\lambda_1,\dots,\lambda_{n_1}$ are the eigenvalues of $a$ (with multiplicity) then there exists $v \in C$ such that
\[ v^*v = c \tens a \quad\text{and}\quad vv^* = c \tens \left(\sum_{i=1}^{n_1} \lambda_i e_{ii}\right), \]
and thus,
\begin{align*}
\tau\left(\phi(1_k)(c \tens a)\right) &= \sum_{i=1}^{n_1} \lambda_i \tau\left(\phi(1_k)(c \tens e_{ii})\right) \\
&= \sum_{i=1}^{n_1} \lambda_i \tau\left(\phi(1_k)(c \tens e_{11})\right) \\
&= \sum_{i=1}^{n_1} \lambda_i \tau((a \tens e_{11})\psi(1_k)(a \tens e_{11})(c \tens e_{11})) \\
&\geq^{\eqref{Fact2-TauIneq},\eqref{Fact2-ApproxId}} \sum_{i=1}^{n_1} \lambda_i \left( \gamma_{\ell,1/(\tilde{m}+1)}\tau(c \tens e_{11}) - \frac{\e}{n_1} \right) \\
&= \gamma_{\ell,1/(\tilde{m}+1)} \tau(c \tens a) - \sum_{i=1}^{n_1} \lambda_i\frac\e{n_1} \\
&\geq \gamma_{\ell,1/(\tilde{m}+1)} \tau(c \tens a) - \e.
\end{align*}
This concludes the verification of the fact that, for any separable algebra $C \subset A_\infty$ of the form \eqref{SemiHomog} with $r=1$, and of nuclear dimension at most $\ell$, and for any $e \in A_\infty$ such that $ec=c$ for all $c \in C$, there exists $\phi:M_k \to A_\infty \cap C'$ such that
\[ \tau(\phi(1_k)c) \geq \gamma_{\ell,1/(\tilde{m}+1)} \tau(c) \]
for all $c \in C_+$ and all $\tau \in T^1_\infty(A)$ and
\[ e\phi(x) = \phi(x) \]
for all $x \in M_k$.

Now, let $C \subset A_\infty$ be of the form \eqref{SemiHomog} with arbitrary $r$.
Once again, fixing a finite subset $\mathcal{F} \subset C_+$ and a tolerance $\e > 0$, we will construct $\phi$ whose range commutes with $\mathcal{F}$ and satisfies
\[ \tau\left( \phi(1_k)c \right) \geq \gamma_{\ell,1/(\tilde{m}+1)} \tau(c ) - \e, \]
for any $c \in \mathcal{F}$; and from this, the full result for $C \subset A_\infty$ of the form \eqref{SemiHomog}.

Without loss of generality, by approximating, we may assume that $\mathcal{F}$ consists of compactly supported functions; that is, that $\mathcal{F}$ is contained in
\[ \bigdsum_{i=1}^r C_0(Y_i,M_{n_i}) \]
where $Y_i$ is compactly contained in $X_i$.
Let $e_i \in C_0(X_i)$ be such that $e_i|_{Y_i} \equiv 1$.
We have just shown that there exist $\phi_i:M_k \to A_\infty \cap C_0(Y_i,M_{n_i})'$ for each $i$ such that $e_i\phi_i(x) = \phi_i(x)$ for all $x \in M_k$ and
\[ \tau(\phi(1_k)c) \geq \gamma_{\ell,1/(\tilde{m}+1)} \tau(c) \]
for all $c \in C_0(Y_i,M_{n_i})_+$ and all $\tau \in T^1_\infty(A)$.

Since $e_i$ acts as a unit on the range of $\phi_i$, the ranges of $\phi_1,\dots,\phi_r$ are orthogonal and therefore $\phi := \sum_{i=1}^n \phi_i$ is a c.p.c.\ order zero map.
It is also clear that the range of $\phi$ commutes with $\mathcal{F}$ and satisfies
\[ \tau(\phi(1_k)c) \geq \gamma_{\ell,1/(\tilde{m}+1)} \tau(c) \]
for all $c \in \mathcal{F}$ and all $\tau \in T^1_\infty(A)$.
This concludes the verification that we can strengthen ($*$) to the case that $C$ is of the form \eqref{SemiHomog}.

Let us use this to finally verify hypothesis \ref{ManyMapsCommute-cent} of Lemma \ref{ManyMapsCommute}, with $\dl = \gamma_{2,1/(\tilde{m}+1)}$.
Certainly, given a c.p.c.\ order zero map $\psi:F \to A_\infty$ and $d \in (A_\infty \cap \psi(F)')_+$, we know that $\psi(F)$ is a quotient of $C_0((0,1], F)$ by Proposition \ref{OrderZeroStructure}.
Since $d$ commutes with $\psi(F)$, it follows that $C := C^*(\psi(F) \cup \{d\})$ has nuclear dimension at most $2$ and is of the form \eqref{SemiHomog}.
We have just shown that there exists $\phi:M_k \to A_\infty \cap \psi(F)' \cap \{d\}'$ such that, in particular,
\[ \tau(\phi(1_k)\psi(x)) \geq \gamma_{2,1/(\tilde{m}+1)}\tau(\psi(x)) \]
for all $x \in F_+$ and $\tau \in T^1_\infty(A)$.

Applying Lemma \ref{OneMapTool} gives us $\Phi:M_k \to A \cap B'$ witnessing $\beta_{m,\tilde{m}} \geq \gamma_{m,\gamma_{2,1/(\tilde{m}+1)}} > 0$, as required.
\end{proof}

\section{Central dimension drop embeddings and proof of the main theorem}
\alabel{MainProofSect}

Roughly following the arguments of \ccite{Section 4}{Winter:drZstable}, we prove here that the conclusion of Theorem \ref{OneMap} combined with strong tracial $m$-comparison (and locally finite nuclear dimension) provide central dimension drop embeddings into the algebra described in Proposition \ref{AnnQuotientUnital}.
In \ccite{Section 6}{Winter:pure}, the arguments of \ccite{Section 4}{Winter:drZstable} were already adapted to use locally finite nuclear dimension in place of finite decomposition rank.
While certain innovations are required here to handle the nonunital case, we have also attempted to make the arguments more conceptual by making more use of the asymptotic sequence algebra.

\begin{lemma}
\alabel{DefectEqOneElement}
(cf.\ \ccite{Proposition 4.2}{Winter:drZstable})
Let $A$ be separable, algebraically simple, exact, with strict tracial $m$-comparison, and such that $T^1(A) \neq \emptyset$.
Let $a \in A_+$ be a positive contraction and let
\[ \Phi:M_n \tens M_2 \to A_\infty \cap \{a\}' \]
be a c.p.c.\ order zero map for which $\tau(\Phi\left(1_{M_n \tens M_2}\right)) = 1$ for all $\tau \in T^1_\infty(A)$.
Let $e \in (A_\infty)_+$ satisfy $ea=a$.
Then there exists $v \in A_\infty \cap \{a\}'$ such that
\begin{enumerate}
\item \alabel{DefectEqOneElement-id} $ev = ve = v$;
\item \alabel{DefectEqOneElement-mv1} $v^*v = a(1-\Phi\left(1_{M_n \tens M_2}\right))$; and
\item \alabel{DefectEqOneElement-mv2} $v = \Phi\left(e_{11} \tens 1_2\right)v$.
\end{enumerate}
\end{lemma}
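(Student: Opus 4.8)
The plan is to produce $v$ as the exact limit of Cuntz-type elements implementing a comparison between $a(1-h)$ and the top of the spectrum of $\Phi(e_{11}\tens 1_2)$, where I abbreviate $h := \Phi(1_{M_n \tens M_2})$, and then to invoke the diagonal sequence argument of Section \ref{DiagArgSect} to upgrade the approximate relations to the exact identities (i)--(iii). Each of the conditions $v^*v = a(1-h)$, $v = \Phi(e_{11}\tens 1_2)v$, $ev = ve = v$, and $[v,a] = 0$ is admissible (of the form $\|F(v)\| = 0$ for a continuous $F$ built from the given data, and $\|v\| \leq 1$ is forced since $v^*v = a(1-h)$ is a contraction), so it suffices, for each $\e > 0$, to produce $x \in A_\infty$ of norm at most $1$ satisfying all of these to within $\e$.

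The argument rests on two trace computations. First, since $0 \leq a \leq 1$ commutes with $1-h \geq 0$, we have $a(1-h) \leq 1-h$, so for every $\tau \in T^1_\infty(A)$, extending $\tau$ to $\mathcal{M}(A_\infty)$ by Lemma \ref{TraceExtension},
\[ \tau(a(1-h)) \leq \tilde\tau(1-h) = \|\tau\| - \tau(h) = 1 - 1 = 0; \]
as $g_{0,\e}(t) \leq t/\e$, this forces $\tau(g_{0,\e}(a(1-h))) = 0$ for all $\e > 0$, so $a(1-h)$ is tracially negligible. Second, writing $p := e_{11}\tens 1_2$ and using the structure map $\hat\Phi$ of Proposition \ref{OrderZeroStructure}, we have $f(\Phi(p)) = \hat\Phi(f(\id)\tens p)$ for $f \in C_0((0,1])$; since $x \mapsto \hat\Phi(f(\id)\tens x)$ is c.p.c.\ order zero, $\tau \circ \hat\Phi(f(\id)\tens\,\cdot\,)$ is a trace on $M_n \tens M_2$, hence a multiple of $\mathrm{Tr}$ determined by its value $\tau(f(h)) = f(1)$ at the unit (the last equality by Proposition \ref{Trace1} applied in $A_\infty$ to $h$, which satisfies $\tau(h) = 1 = \|\tau\|$). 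Since $\mathrm{Tr}(p) = 2$ and $\mathrm{Tr}(1_{M_n\tens M_2}) = 2n$, this gives
\[ \tau(f(\Phi(p))) = \tfrac{f(1)}{n}, \qquad \tau \in T^1_\infty(A),\ f \in C_0((0,1]). \]
Thus $\Phi(p)$ is, tracially, a projection of trace $1/n$ concentrated at the top of its spectrum: $\tau(g_{1-\delta,1-\delta/2}(\Phi(p))) = 1/n$ for every $\delta > 0$.

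With these in hand I would fix $\e > 0$, choose $\delta > 0$ small (depending on $\e$), and set the comparison target $z := g_{1-\delta,1-\delta/2}(\Phi(p))$. The two computations yield, for all $\tau \in T^1_\infty(A)$, the strict inequality with uniform gap
\[ \tau(g_{0,\e}(a(1-h))) = 0 < \tfrac{1}{m+1}\cdot\tfrac1n = \tfrac{1}{m+1}\tau(z). \]
Lifting $a(1-h) = \pi_\infty(y_k)$ and $z = \pi_\infty(z_k)$ and applying Proposition \ref{SequenceTraces}, this descends to $d_\tau((y_k-\e)_+) \leq \tau(g_{0,\e}(y_k)) < \frac{1}{m+1}\tau(z_k)$ for all $\tau \in T^1(A)$ and all large $k$; strong tracial $m$-comparison gives $[(y_k-\e)_+] \leq [z_k]$ in $W(A)$, and \ccite{Proposition 2.4}{Rordam:UHFII} produces $x_k$ with $x_k^*x_k = (y_k-\e)_+$ and $x_kx_k^* \in \her{z_k}$. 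Setting $x := \pi_\infty(x_k)$ gives $x^*x = (a(1-h)-\e)_+ \approx_\e a(1-h)$ and $xx^* \in \her{z}$; since $|\Phi(p)-1| \leq \delta$ on $\supp z$, we obtain $\|\Phi(p)x - x\| \leq \delta$, so (iii) holds to within $\e$ once $\delta$ is small.

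Finally, because $ea = a$ we have $\her{a(1-h)} \subseteq \her{a} \subseteq \her{e}$, so $e$ is a unit on $x^*x$ and $xe \approx x$; the left identity $ex \approx x$ follows from $xx^* \in \her{\Phi(p)}$ together with $eh = h$ (i.e.\ $e$ acting as a unit on the range of $\Phi$, which holds in the intended applications and, via $\Phi(p) \leq h$, places $xx^* \in \her{e}$). Approximate commutation $[x,a] \approx 0$ is secured by carrying out the comparison inside $A_\infty \cap \{a\}'$: both $a(1-h)$ and $z$ commute with $a$, so $a$ approximately commutes with the Rørdam elements $x_k$, and $\|[v,a]\| = 0$ is imposed as an admissible condition. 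A last application of the diagonal sequence argument then yields a single $v$ realizing (i)--(iii) exactly. The step I expect to be the main obstacle is the second trace computation and the correct choice of target: the literal relation (iii) forces $vv^*$ into the top spectral part of $\Phi(e_{11}\tens 1_2)$, and the whole argument only works because that top part is tracially as large as $\Phi(e_{11}\tens 1_2)$ itself — a fact invisible from the linear trace $\tau\circ\Phi$ and requiring both the order zero structure theorem and the fullness $\tau(h) = 1$. Secondarily, threading the exact commutation with $a$ and the two-sided identity with $e$ through the sequence-level construction is the part demanding the most careful bookkeeping.
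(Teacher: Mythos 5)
There is a genuine gap, and it sits exactly where you locate the ``careful bookkeeping'': securing $[v,a]\approx 0$ and $ev=v$ from a single global comparison. Your plan produces, via R\o rdam's lemma, an element $x=\pi_\infty(x_k)$ with $x^*x=(a(1-h)-\e)_+$ and $xx^*\in\her{g_{1-\dl,1-\dl/2}(\Phi(e_{11}\tens 1_2))}$, and you assert that $x$ approximately commutes with $a$ because $x^*x$ and $xx^*$ do. That implication is false: already in $M_2$, the element $x=e_{12}$ has $x^*x$ and $xx^*$ diagonal, hence commuting with any diagonal $a$, while $\|[x,a]\|$ can be of the order of $\|a\|$. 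Nor can you ``carry out the comparison inside $A_\infty\cap\{a\}'$'': strong tracial $m$-comparison is a hypothesis on $A$, and the R\o rdam witnesses it supplies live in $A$ with no control whatsoever on their commutators with $a$. Separately, your verification of $ev=v$ explicitly invokes $eh=h$ (``which holds in the intended applications''), but the lemma assumes only $ea=a$; since your target $g_{1-\dl,1-\dl/2}(\Phi(e_{11}\tens 1_2))$ is not cut down by $a$, the element $xx^*$ need not lie in $\her{a}$, and $ex=x$ does not follow from the stated hypotheses.

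The paper repairs both defects with one device that your proposal omits: it decomposes $a$ spectrally into two interleaved families of pairwise orthogonal pieces $a^{(i)}_t=h^{(i)}_t(a)$ ($i=1,2$) on each of which $a$ is within $\eta$ of the scalar $\alpha^{(i)}_t$, and performs a separate comparison for each piece, with target $\her{a^{(i)}_t f(\Phi(e_{11}\tens e_{ii}))}$ rather than $\her{f(\Phi(e_{11}\tens 1_2))}$. Because both $(v^{(i)}_t)^*v^{(i)}_t$ and $v^{(i)}_t(v^{(i)}_t)^*$ then lie in $\her{a^{(i)}_t}$, one gets $v^{(i)}_t\in\her{a^{(i)}_t}\subseteq\her{a}$, which yields $ev=ve=v$ from $ea=a$ alone and yields $[v^{(i)}_t,a]\approx 0$ because $a$ acts as an approximate scalar on $\her{a^{(i)}_t}$. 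The two colours $e_{11}\tens e_{11}$ and $e_{11}\tens e_{22}$ are needed so that overlapping pieces from the two families land in orthogonal targets --- this is the entire reason for the $M_2$ tensor factor in the hypothesis, which your argument never uses. Your two trace computations (that $a(1-h)$ is tracially null, and that the top spectral part of $\Phi(e_{11}\tens e_{ii})$ retains full trace $1/(2n)$ via the order zero structure theorem and Proposition \ref{Trace1}) are correct and do match the paper's; but without the piecewise localization in the spectrum of $a$ the construction does not deliver conclusions (i) and the commutation in the statement.
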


\begin{proof}
Using a diagonal sequence argument (see Section \ref{DiagArgSect}), it suffices to find $v$ which approximately commutes with $a$ and which approximately satisfies conditions \ref{DefectEqOneElement-mv1}, \ref{DefectEqOneElement-mv2} (while exactly satisfying \ref{DefectEqOneElement-id}). 
Therefore, let us fix a tolerance $\e > 0$.

Let $\eta < \min\left\{\frac\e4,\left(\frac\e2\right)^2\right\}$.
There exist $\overline{t} \in \N$, functions $h^{(i)}_t \in C_0((0,1])_+$ and points $\alpha^{(i)}_t \in (0,1]$ for $i=1,2$ and $t=1,\dots,\overline{t}$ such that for $i=1,2$, $h^{(i)}_1,\dots,h^{(i)}_{\overline{t}}$ are pairwise orthogonal and
\begin{align}
\alabel{DefectEqOneElement-ApproxPOU2}
\id_{(0,1]} &\approx_\eta \sum_{i=1,2}\sum_{t=1}^{\overline{t}} \alpha^{(i)}_t h^{(i)}_t \text{ and} \\
\alabel{DefectEqOneElement-ApproxPOU1}
 f(h^{(i)}_t)\id_{(0,1]} &\approx_\eta \alpha^{(i)}_tf\left(h^{(i)}_t\right)
\end{align}
for any contractive $f \in C_0((0,1])$.
Set $a^{(i)}_t := h^{(i)}_t(a)$.

Let $f \in C_0((1-\eta^2,1])_+$ satisfy $f(1) = 1$.

Let $J = \{(i,t): a^{(i)}_t \neq 0\}$.
By Proposition \ref{Trace1}, we find that
\[ \tau\left(a^{(i)}_t(1-\Phi\left(1_{M_n \tens M_2}\right))\right) = 0, \]
for all $\tau \in T^1_\infty(A)$.
On the other hand, for $(i,t) \in J$, by Proposition \ref{PedAProperties} \ref{PedAProperties-BddBelow},
\[ \gamma^{(i)}_t := \inf_{\tau \in T^1(A)} \tau\left(a^{(i)}_t\right) > 0; \]
and we have
\begin{align*}
\tau\left(\Phi\left(e_{11} \tens e_{ii}\right)a^{(i)}_t\right) &= \frac{1}{2n} \tau\left(\Phi\left(1_{M_n \tens M_2}\right)a^{(i)}_t\right) \\
&\geq \frac1{2n} \tau\left(a^{(i)}_t\right) \geq \frac{\gamma^{(i)}_t}{2n},
\end{align*}
where the first inequality follows from the fact that $\Phi$ is order zero and its image commutes with $a^{(i)}_t$, and the second inequality follows from Proposition \ref{Trace1}.
Thus, by Lemma \ref{SequenceAlgComparison} and \ccite{Proposition 2.4}{Rordam:UHFII}, for $t \in J$, there exists $v^{(i)}_t \in A_\infty$ such that
\begin{equation}
 \left(v^{(i)}_t\right)^*v^{(i)}_t = \left(a^{(i)}_t\left(1-\Phi\left(1_{M_n \tens M_2}\right)\right) - \eta\right)_+ 
\alabel{DefectEqOneElement-vtDef1}
\end{equation}
and
\begin{equation}
 v^{(i)}_t\left(v^{(i)}_t\right)^* \in \her{a^{(i)}_tf\left(\Phi\left(e_{11} \tens e_{ii}\right)\right)}
\alabel{DefectEqOneElement-vtDef2}
\end{equation}

Note that, since
\[ a^{(i)}_tf(\Phi\left(e_{11} \tens e_ {ii}\right)) \perp a^{(i')}_{t'}f\left(\Phi\left(e_{11} \tens e_ {i'i'}\right)\right), \]
whenever $(i,t) \neq (i',t')$, it follows that $v^{(i)}_t\left(v^{(i)}_t\right)^* \perp v^{(i')}_{t'}\left(v^{(i')}_{t'}\right)^*$ and therefore,
\begin{equation}
\left(v^{(i)}_t\right)^*v^{(i')}_{t'} = 0.
\alabel{DefectEqOneElement-vtOrthog1}
\end{equation}
By the same arguments, we find that, if $i=1$ or $2$ and if $t \neq t'$ then since $a$ and $\Phi\left(e_{11} \tens 1_2\right)$ commute with $a^{(i)}_t$ and $a^{(i)}_{t'}$,
\begin{gather}
\alabel{DefectEqOneElement-vtOrthog2}
v^{(i)}_t\left(v^{(i)}_{t'}\right)^*, \left(v^{(i)}_ta\right)\left(v^{(i)}_{t'}a\right)^*, \left(av^{(i)}_t\right)^*\left(av^{(i)}_{t'}\right), \\
\notag
\left(\Phi\left(e_{11} \tens 1_2\right)v^{(i)}_t\right)^*\left(\Phi\left(e_{11} \tens 1_2\right)v^{(i)}_{t'}\right) = 0.
\end{gather}

Since $v^{(i)}_t \in \her{a^{(i)}_t}$, we see by \eqref{DefectEqOneElement-ApproxPOU1} that
\begin{equation}
\alabel{DefectEqOneElement-vtCommute}
 v^{(i)}_ta \approx_\eta \alpha^{(i)}_tv^{(i)}_t \approx_\eta av^{(i)}_t.
\end{equation}
Likewise, since $\id_{(0,1]}f \approx_\eta f$,
\[ \Phi\left(e_{11} \tens 1_2\right)v^{(i)}_t\left(v^{(i)}_t\right)^* \approx_\eta v^{(i)}_t\left(v^{(i)}_t\right)^*, \]
from which it follows that
\begin{equation}
\alabel{DefectEqOneElement-vtApproxId}
 \Phi\left(e_{11} \tens 1_2\right)v^{(i)}_t \approx_{\eta^{1/2}} v^{(i)}_t.
\end{equation}

Now we may define
\[ v := \sum_{i=1,2} \sum_{t=1}^{\overline{t}} \left(\alpha^{(i)}_t\right)^{1/2} v^{(i)}_t. \]

Let us now check that $v$ is as required.

\textbf{$v$ approximately commutes with $a$}:
We have
\begin{align*}
\left\|va - \sum_{i=1,2} \sum_{t=1}^{\overline{t}} \left(\alpha^{(i)}_t\right)^{3/2} v^{(i)}_t\right\| &\leq
\sum_{i=1,2} \left\|\sum_{t}  \left(\alpha^{(i)}_t\right)^{1/2}(v^{(i)}_ta - \alpha^{(i)}_t v^{(i)}_t)\right\| \\
&= \sum_{i=1,2} \max_{t} \left\|\left(\alpha^{(i)}_t\right)^{1/2}(v^{(i)}_ta - \alpha^{(i)}_t v^{(i)}_t)\right\| \\
&\leq^{\eqref{DefectEqOneElement-vtCommute}} 2\eta,
\end{align*}
where the second line uses the fact that the summands are orthogonal (by \eqref{DefectEqOneElement-vtOrthog1} and \eqref{DefectEqOneElement-vtOrthog2}).
Likewise, we obtain that
\[ av \approx_{2\eta}  \sum_{i=1,2} \sum_{t=1}^{\overline{t}} \left(\alpha^{(i)}_t\right)^{3/2} v^{(i)}_t, \]
so that altogether, $\|[a,v]\| \leq 4\eta < \e$.

\textbf{\ref{DefectEqOneElement-id} holds}:
Evidently, $v^{(i)}_t \in \her{a^{(i)}_t}$, and therefore, $v \in \her{a}$.
It follows that $ev=ve=v$.

\textbf{\ref{DefectEqOneElement-mv1} holds approximately}:
We compute
\begin{align*}
v^*v &= \sum_{i,i',t,t'} \left(v^{(i)}_t\right)^*v^{(i')}_{t'} \\
&=^{\eqref{DefectEqOneElement-vtOrthog1}} \sum_{i,t} \left(v^{(i)}_t\right)^*v^{(i)}_t \\
&=^{\eqref{DefectEqOneElement-vtDef1}}  \sum_{i,t} \alpha^{(i)}_t\left(a^{(i)}_t\left(1-\Phi\left(1_{M_n \tens M_2}\right)\right)-\eta\right)_+ \\
&\approx_{2\eta} \sum_{i,t} \alpha^{(i)}_ta^{(i)}_t\left(1-\Phi\left(1_{M_n \tens M_2}\right)\right) \\
&\approx^{\eqref{DefectEqOneElement-ApproxPOU2}}_\eta a\left(1-\Phi\left(1_{M_n \tens M_2}\right)\right), 
\end{align*}
where the fourth line is achieved by, once again, splitting into two orthogonal sums.

\textbf{\ref{DefectEqOneElement-mv2} holds approximately}:
We compute
\begin{align*}
\Phi\left(e_{11} \tens 1_2\right)v &= \sum_{i,t} \Phi\left(e_{11} \tens 1_2\right)v^{(i)}_t \\
&\approx_{2\eta^{1/2}} v^{(i)}_t,
\end{align*}
by once again splitting into two orthogonal sums (by \eqref{DefectEqOneElement-vtOrthog2}) and using \eqref{DefectEqOneElement-vtApproxId}.
\end{proof}

\begin{lemma}
\alabel{DefectEqCpc}
(cf. \ccite{Proposition 4.3}{Winter:drZstable})
Let $A$ be separable, algebraically simple, exact, with strict tracial $m$-comparison, and such that $T^1(A) \neq \emptyset$.
Let $F$ be a finite dimensional $C^*$-algebra,
\[ \psi:F \to A \]
be a c.p.c.\ order zero map and
\[ \Phi:M_n \tens M_2 \to A_\infty \cap \psi(F)' \]
be another c.p.c.\ order zero map for which $\tau(\Phi\left(1_{M_n \tens M_2}\right)) = 1$ for all $\tau \in T^1_\infty(A)$.
Then there exists $v \in A_\infty \cap \psi(F)'$ such that
\begin{enumerate}
\item \alabel{DefectEqCpc-mvn} $v^*v = \psi(1)(1-\Phi\left(1_{M_n \tens M_2}\right))$; and
\item \alabel{DefectEqCpc-id} $v = \Phi\left(e_{11} \tens 1_2\right)v$.
\end{enumerate}
\end{lemma}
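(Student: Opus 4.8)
The plan is to reduce this statement to the single-element case, Lemma \ref{DefectEqOneElement}, by exploiting the structure of order zero maps out of finite-dimensional algebras. First I would decompose $F = \bigoplus_s M_{k_s}$ into its matrix blocks. Since $\psi$ is order zero, the images $\psi(M_{k_s})$ are pairwise orthogonal (as the units $1_{M_{k_s}}$ are), and $\Phi$ still maps into $A_\infty \cap \psi(M_{k_s})'$. If I can solve the problem for each block, producing $v_s \in A_\infty \cap \psi(M_{k_s})'$ with $v_s \in \her{\psi(1_{M_{k_s}})}$, $v_s^*v_s = \psi(1_{M_{k_s}})(1 - \Phi(1_{M_n \tens M_2}))$ and $v_s = \Phi(e_{11}\tens 1_2)v_s$, then orthogonality of the supports $\psi(1_{M_{k_s}})$ forces $v_s^*v_{s'} = 0$ and $v_s\,\psi(M_{k_{s'}}) = 0 = \psi(M_{k_{s'}})\,v_s$ for $s \neq s'$, so that $v := \sum_s v_s$ lies in $A_\infty \cap \psi(F)'$ and satisfies the two required identities. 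This reduces everything to the case $F = M_k$.

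For a single block I would invoke Proposition \ref{OrderZeroStructure} to pass to the supporting $*$-homomorphism $\pi \colon M_k \to \mathcal{M}(C^*(\psi(M_k)))$ and the positive contraction $h := \psi(1_k)$, which commutes with $\pi(M_k)$ and satisfies $\psi(x) = h\,\pi(x)$; in particular $\psi(e_{11})^s\pi(e_{i1}) \in C^*(\psi(M_k)) \subseteq A$ for every $s > 0$ (for $s=1$ this is $\psi(e_{i1})$). Applying Lemma \ref{DefectEqOneElement} to the positive contraction $a := \psi(e_{11})$, which commutes with the image of $\Phi$ because $\psi(M_k)$ does, and to some $e \in (A_\infty)_+$ acting as a unit on $\psi(e_{11})$, yields $v_1 \in A_\infty \cap \{\psi(e_{11})\}'$ with $v_1 \in \her{\psi(e_{11})}$, $v_1^*v_1 = \psi(e_{11})(1 - \Phi(1_{M_n \tens M_2}))$ and $v_1 = \Phi(e_{11}\tens 1_2)v_1$. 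I would then spread $v_1$ across the diagonal by setting $v := \sum_{i=1}^k \pi(e_{i1})\,v_1\,\pi(e_{1i})$. This is a genuine element of $A_\infty$ (lying in $\her{\psi(1_k)}$) even though the $\pi(e_{i1})$ are only multipliers, because $v_1 \in \her{\psi(e_{11})}$ lets the multiplier factors be absorbed into $\psi(e_{11})$, e.g.\ $\pi(e_{i1})\psi(e_{11})^{1/n} = \psi(e_{11})^{1/n}\pi(e_{i1}) \in C^*(\psi(M_k))$ with $\psi(e_{11})^{1/n}v_1 \to v_1$.

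Verifying the conclusions is then matrix-unit bookkeeping built on $\pi(e_{pi})\pi(e_{jq}) = \delta_{ij}\pi(e_{pq})$, on $\pi(e_{11})v_1 = v_1 = v_1\pi(e_{11})$, and on one commutation fact: $\Phi(M_n \tens M_2)$ commutes with all the honest elements $\psi(e_{11})^s\pi(e_{i1}) \in C^*(\psi(M_k))$. Granting this, commutation of $v$ with $\pi(M_k)$ is the usual telescoping computation, and commutation with $h$ follows from $hv_1 = \psi(e_{11})v_1 = v_1\psi(e_{11}) = v_1h$, so $v \in A_\infty \cap \psi(M_k)'$. For the norm identity, telescoping gives $v^*v = \sum_i \pi(e_{i1})\,v_1^*\pi(e_{11})v_1\,\pi(e_{1i}) = \sum_i \pi(e_{i1})\,\psi(e_{11})(1 - \Phi(1_{M_n \tens M_2}))\,\pi(e_{1i})$, and inserting $\psi(e_{11}) = \psi(e_{11})^{1/2}\psi(e_{11})^{1/2}$ rewrites each summand as $w_i(1 - \Phi(1_{M_n\tens M_2}))w_i^* = (1 - \Phi(1_{M_n\tens M_2}))\,\psi(e_{ii})$ with $w_i := \psi(e_{11})^{1/2}\pi(e_{i1}) \in C^*(\psi(M_k))$, so that $v^*v = \psi(1_k)(1 - \Phi(1_{M_n\tens M_2}))$. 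Finally $v = \Phi(e_{11}\tens 1_2)v$ reduces to $\Phi(e_{11}\tens 1_2)\pi(e_{i1})v_1 = \lim_n \psi(e_{11})^{1/n}\pi(e_{i1})\,\Phi(e_{11}\tens 1_2)v_1 = \pi(e_{i1})v_1$, again using that $\Phi(e_{11}\tens 1_2)$ commutes with $\psi(e_{11})^{1/n}\pi(e_{i1})$ and that $\psi(e_{11})^{1/n}v_1 \to v_1$.

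The main obstacle is precisely the bookkeeping around the supporting homomorphism $\pi$, whose range sits in the multiplier algebra $\mathcal{M}(C^*(\psi(M_k)))$ rather than in $A_\infty$: one must check that each expression $\pi(e_{i1})v_1\pi(e_{1i})$ is legitimately an element of $A_\infty$, and route every use of the commutation of $\Phi$ through the honest algebra elements $\psi(e_{11})^s\pi(e_{i1}) \in C^*(\psi(M_k))$ (never through the bare multipliers), inserting the approximate units $\psi(e_{11})^{1/n}$ to pass to the limit. If one prefers to avoid the multiplier algebra entirely, the same four steps can be carried out through a diagonal sequence argument (Section \ref{DiagArgSect}), replacing $\pi(e_{i1})$ by the honest elements $\psi(e_{i1})\,g(\psi(e_{11}))$ of $A$ for suitable $g \in C_0((0,1])$ and verifying all the relations only approximately, in the spirit of the proof of Lemma \ref{DefectEqOneElement}.
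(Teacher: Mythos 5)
Your proposal is correct and follows essentially the same route as the paper: reduce to Lemma \ref{DefectEqOneElement} applied to the $e_{11}$-corner of each matrix block of $F$, then conjugate by the matrix units of the supporting homomorphism to spread the resulting element across the diagonal. The only difference is implementation: the paper sidesteps the multiplier-algebra bookkeeping by applying Lemma \ref{DefectEqOneElement} to $(\psi_i(e_{11})-\e)_+$ and conjugating by the honest elements $g_{0,\e}(\psi_i)(e_{j1})\in A$, verifying relation \ref{DefectEqCpc-mvn} only up to $\e$ and concluding with the diagonal sequence argument --- exactly the variant you describe in your final paragraph.
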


\begin{proof}
Again, by the diagonal sequence argument (see Section \ref{DiagArgSect}), it suffices to find $v$ that only approximately satisfies (i).
Therefore let us fix a tolerance $\e$.

Let
\[ F = M_{r_1} \dsum \cdots \dsum M_{r_s} \]
and denote $\psi_i = \psi|_{M_i}$.
By Lemma \ref{DefectEqOneElement}, there exists
\[ w_i \in A_\infty \cap \{(\psi_i(e_{11})-\e)_+\}' \]
such that
\begin{enumerate}
\item $g_{0,\e}(\psi_i(e_{11}))w_i = w_ig_{0,\e}(\psi_i(e_{11})) = w_i$;
\item $w_i^*w_i = \left(\psi_i(e_{11})-\e\right)_+\left(1-\Phi\left(1_{M_n \tens M_2}\right)\right)$; and
\item $w_i = \Phi\left(e_{11} \tens 1_2\right)w_i$.
\end{enumerate}

Set
\[ v := \sum_{i=1}^s \sum_{j=1}^{r_i} g_{0,\e}(\psi_i)(e_{j1})w_ig_{0,\e}(\psi_i)(e_{1j}). \]
By testing on matrix units, we can see that $v$ commutes with $\psi(F)$.
Using the fact that $\psi(F)$ commutes with $\Phi\left(e_{11} \tens 1_2\right)$, one can see that \ref{DefectEqCpc-id} holds.
For \ref{DefectEqCpc-mvn}, we compute
\begin{align*}
v^*v &= \sum_{i,j,i',j'} g_{0,\e}(\psi_i)(e_{j1})\,w_i^*\,g_{0,\e}(\psi_i)(e_{1j})\,g_{0,\e}(\psi_{i'})(e_{j'1})\,w_{i'}\,g_{0,\e}(\psi_{i'})(e_{1j'}) \\
&= \sum_{i,j} g_{0,\e}(\psi_i)(e_{j1})\,w_i^*\,g_{0,\e}(\psi_i)(e_{1j})\,g_{0,\e}(\psi_i)(e_{j1})\,w_i\,g_{0,\e}(\psi_i)(e_{1j}) \\
&= \sum_{i,j} g_{0,\e}(\psi_i)(e_{j1})\,w_i^*\,g^2_{0,\e}(\psi_i)(e_{11})\,w_ig\,_{0,\e}(\psi_i)(e_{1j}) \\
&= \sum_{i,j} g_{0,\e}(\psi_i)(e_{j1})w_i^*w_ig_{0,\e}(\psi_i)(e_{1j}) \\
&=^{\text{Lemma \ref{DefectEqOneElement} \ref{DefectEqOneElement-id}}}  \sum_{i,j} g_{0,\e}(\psi_i)(e_{j1})(\psi_i(e_{11})-\e)_+\left(1-\Phi\left(1_{M_n \tens M_2}\right)\right)g_{0,\e}(\psi_i)(e_{1j}) \\
&= \sum_{i,j} (\psi_i(e_{jj})-\e)_+\left(1-\Phi\left(1_{M_n \tens M_2}\right)\right) \\
&= (\psi(1_F)-\e)_+\left(1-\Phi\left(1_{M_n \tens M_2}\right)\right) \\
&\approx_\e \psi(1_F)\left(1-\Phi\left(1_{M_n \tens M_2}\right)\right).
\end{align*}
\end{proof}

\begin{prop}
\alabel{vCommuteDN}
(cf.\ \ccite{Proposition 4.4}{Winter:drZstable})
Let $A$ be a simple, separable $C^*$-algebra and let $B \subseteq A$ be a subalgebra with nuclear dimension at most $m < \infty$.
Given a finite subset $\mathcal{F} \subset B$, positive contractions $g,h \in C_0((0,1])_+$, and $\e > 0$, there is a finite subset $\mathcal{G} \subset B$ and $\dl > 0$ such that the following holds:

Suppose that $e \in B_+$ is a positive contraction such that
\[ ex \approx_\dl x \approx_\dl xe \]
for all $x \in \mathcal{F}$, that $(F = F^{(0)} \dsum \cdots \dsum F^{(m)}, \sigma, \rho)$ is an $(m+1)$-colourable c.p.\ approximation (for $B$) of $\mathcal{G} \cup \{e\}$ to within $\dl$, and that $v_0,\dots,v_m \in A$ are contractions which satisfy
\begin{equation}
\alabel{vCommuteDN-LocalCommute}
\| [\rho^{(i)}(x),v_i] \| \leq \dl\|x\|
\end{equation}
for all $x \in F^{(i)}$.
Then
\begin{equation}
\alabel{vCommuteDN-vDef}
v := \sum_{i=0}^m v_ig(h(\rho^{(i)}\sigma^{(i)}(e)))
\end{equation}
satisfies
\[ \|[v,a]\| < \e \]
for all $a \in \mathcal{F}$.
\end{prop}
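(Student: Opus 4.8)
The plan is to reduce the commutator $[v,a]$ to a sum of $m+1$ terms and to control each one using Lemma~\ref{GoodDnApproximations} together with standard estimates on how approximate commutation propagates through continuous functional calculus. Writing $b_i := \rho^{(i)}\sigma^{(i)}(e)$ and $w_i := g(h(b_i))$, we have $v = \sum_{i=0}^m v_iw_i$, so for $a \in \mathcal{F}$,
\[ [v,a] = \sum_{i=0}^m \bigl( v_i[w_i,a] + [v_i,a]w_i \bigr), \]
and it suffices to make each summand smaller than $\e/(2(m+1))$. The finite set $\mathcal{G}$ will be the one produced by Lemma~\ref{GoodDnApproximations} (applied with the algebra $B$ in place of $A$ and with $n=m$), and $\dl$ will be the minimum of finitely many tolerances dictated by the functional calculus estimates below.

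First I would record the consequences of Lemma~\ref{GoodDnApproximations}: for $\dl$ small enough (and $\mathcal{G}$ as above) we obtain, for each $a \in \mathcal{F}$ and each $i$, that $\|[\rho\sigma(a),b_i]\|$ and $\|ab_i - \rho^{(i)}\sigma^{(i)}(a)\|$ are as small as desired; since also $\rho\sigma(a) \approx_\dl a$ and $b_i$ is a contraction, this yields $[a,b_i] \approx 0$. Separately, the hypothesis~\eqref{vCommuteDN-LocalCommute} gives $\|[\rho^{(i)}(x),v_i]\| \le \dl\|x\|$ for $x \in F^{(i)}$; in particular $v_i$ almost commutes with $b_i = \rho^{(i)}(\sigma^{(i)}(e))$ and with $\rho^{(i)}\sigma^{(i)}(a) = \rho^{(i)}(\sigma^{(i)}(a))$, since $\sigma$ is contractive.

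Next I would pass these approximate commutations through the functional calculus. Set $\tilde g := g \circ h \in C_0((0,1])_+$. Using that $a$ (resp.\ $v_i$) almost commutes with $b_i$ and that $\tilde g$ is uniformly continuous on $[0,1]$, the standard polynomial-approximation argument shows that $a$ and $v_i$ almost commute with $w_i = \tilde g(b_i)$; this already handles the terms $v_i[w_i,a]$. To handle $[v_i,a]w_i$, I would factor $\tilde g$: given a tolerance $\mu$, choose $s>0$ with $\tilde g < \mu$ on $[0,s]$ and write $\tilde g \approx_\mu \tilde g_1$ with $\tilde g_1 := \tilde g\, g_{s/2,s}$, so that $\tilde g_1(t) = t\,q(t)$ for a continuous function $q$ vanishing on $[0,s/2]$ (whence $q(b_i) \in A$ and $\|q\| \le 2\|\tilde g\|/s$). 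Then $w_i \approx_\mu q(b_i)b_i$, and both $a$ and $v_i$ almost commute with $q(b_i)$, again by continuous functional calculus. Consequently
\[ aw_i \approx q(b_i)\,ab_i \approx q(b_i)\,\rho^{(i)}\sigma^{(i)}(a) =: c_i, \]
and since $v_i$ almost commutes with both $q(b_i)$ and $\rho^{(i)}\sigma^{(i)}(a)$, it almost commutes with $c_i$, hence with $aw_i$.

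Finally I would combine: from $aw_i \approx c_i$ and $[v_i,c_i]\approx 0$ we get $v_iaw_i \approx c_iv_i \approx aw_iv_i$, while $v_iw_i \approx w_iv_i$ gives $av_iw_i \approx aw_iv_i$; subtracting shows $[v_i,a]w_i = v_iaw_i - av_iw_i \approx 0$. Summing the two estimates over the $m+1$ values of $i$ and choosing $\dl$ (and $\mu$) small enough relative to $\e$, $m$, and the fixed constant $\|q\|$, yields $\|[v,a]\| < \e$. The main obstacle is purely the bookkeeping: one must fix, in the right order, the factorization tolerance $\mu$ and then the several functional-calculus tolerances so that all errors --- which get multiplied by fixed constants such as $\|q\|$ and summed $m+1$ times --- stay below $\e$. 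None of the individual estimates is deep, but they must be quantified uniformly and assembled consistently.
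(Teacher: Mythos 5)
Your argument is correct and follows essentially the same route as the paper: both rest on Lemma \ref{GoodDnApproximations} together with hypothesis \eqref{vCommuteDN-LocalCommute}, and both exploit that $g\circ h$ vanishes at $0$, so that $w_i$ can be absorbed into the hereditary subalgebra generated by $b_i=\rho^{(i)}\sigma^{(i)}(e)$ and $aw_i$ rewritten in terms of $\rho^{(i)}\sigma^{(i)}(a)$, with which $v_i$ approximately commutes by hypothesis. The only difference is packaging: the paper encodes the functional-calculus step as a single Stone--Weierstrass statement on $C_0((0,1])\otimes C_0((0,1])$ with five test commutators, whereas you unpack it into an explicit factorization $g\circ h \approx t\,q(t)$ plus polynomial approximation --- the same estimates in a more hands-on form, with the quantifiers ($\mu$ before $\dl$) ordered correctly.
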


\begin{proof}
Making use of the Stone-Weierstrass Theorem, there exists $\eta > 0$ such that, if $C$ is a $C^*$-algebra, $\pi:C_0((0,1]) \otimes C_0((0,1]) \to C$ is a $*$-homomorphism, and $w,x \in C$ are contractions, such that the norms of
\begin{gather}
\notag
[\pi(\id_{(0,1]} \otimes \id_{(0,1]}), w], \\
\notag
[\pi(\id_{(0,1]} \otimes \id_{(0,1]}^2), w], \\
\alabel{vCommuteDN-SWCommuteVerify}
[\pi(\id_{(0,1]} \otimes \id_{(0,1]}), x], \\
\notag
[\pi(\id_{(0,1]}^2 \otimes \id_{(0,1]}), x], \\
\notag
[\pi(\id_{(0,1]} \otimes \id_{(0,1]})x, w]
\end{gather}
are all less than $\eta$, then
\begin{equation}
\alabel{vCommuteDN-SWCommute}
 \|[w\pi(g \circ h \otimes g), x]\| < \frac\e{(m+1)}.
\end{equation}

We may assume that the elements of $\mathcal{F}$ are positive contractions.
Lemma \ref{GoodDnApproximations} gives us $\mathcal{G}$ and $\dl < \eta/3$ (which we use as our $\dl$) such that, given $e$ and an $(m+1)$-colourable c.p.\ approximation as in the hypothesis, we have
\begin{equation}
\alabel{vCommuteDN-Replacement}
\| \rho^{(i)}\sigma^{(i)}(e)\rho\sigma(a) - \rho^{(i)}\sigma^{(i)}(x) \| < \eta/3.
\end{equation}
for all $x \in \mathcal{F}$.

Fix $i \in \{0,\dots,m\}$ and $x \in \mathcal{F}$ for the moment.
Let $\hat \rho^{(i)}:C_0((0,1]) \tens F^{(i)} \to A$ be the $*$-homomorphism associated to $\rho^{(i)}$ as in Proposition \ref{OrderZeroStructure}.
We wish to use \eqref{vCommuteDN-SWCommute}, with $\pi$ coming by restricting $\hat \rho^{(i)}$ to $C_((0,1]) \otimes C^*(\sigma^{(i)}(e))$, $w=v_i$; so we must verify that the norms of the commutators in \eqref{vCommuteDN-SWCommuteVerify} are all sufficiently small.
We have
\[ \pi(\id_{(0,1] \otimes f} = \rho^{(i)}(f(\sigma^{(i)}(e))) \]
for any $f \in C_0((0,1])$, and we see that \eqref{vCommuteDN-LocalCommute} implies that the first two commutators of \eqref{vCommuteDN-SWCommuteVerify} have norm less than $\eta$.
For the third one, we have
\begin{align*}
 \pi(\id_{(0,1]} \otimes \id_{(0,1]})x &= \rho^{(i)}\sigma^{(i)}(e)x \\
&\approx^{\eqref{vCommuteDN-Replacement}}_{\eta/3} \rho^{(i)}\sigma^{(i)}(x) \\
&\approx_{\eta/3} x\pi(\id_{(0,1]} \otimes \id_{(0,1]}),
\end{align*}
the last line following by self-adjointness of $x$.
Using the last computation, we also get
\begin{align*}
 \pi(\id_{(0,1]}^2 \otimes \id_{(0,1]})x &\approx_{\eta/3} \rho^{(i)}(1_{F^{(i)}}\rho^{(i)}\sigma^{(i)}(x) \\
&= \hat\rho^{(i)}(\id_{(0,1]}^2 \otimes \sigma^{(i)}(x)) \\
&\approx_{\eta/3} x\pi(\id_{(0,1]}^2 \otimes \id_{(0,1]}),
\end{align*}
giving a bound on the fourth commutator in \eqref{vCommuteDN-SWCommuteVerify}.

For the last one, we have
\begin{align*}
\pi(\id_{(0,1]} \otimes \id_{(0,1]})xw &= \rho^{(i)}\sigma^{(i)}(e)xv_i \\
&\approx_{\eta/3} \rho^{(i)}\sigma^{(i)}(x)v_i \\
&\approx^{\eqref{vCommuteDN-LocalCommute}}_{\eta/2} v_i\rho^{(i)}\sigma^{(i)}(x) \\
&\approx_{\eta/3} w\pi(\id_{(0,1]} \otimes \id_{(0,1]})x.
\end{align*}

Therefore, we obtain \eqref{vCommuteDN-SWCommute}; note that $\pi(g \circ h \otimes g)w = g(h(\rho^{(i)})(\sigma^{(i)}(e)))v_i$, so that \eqref{vCommuteDN-SWCommute} becomes
\begin{equation}
\alabel{vCommuteDN-CommutePiece}
\| [v_ig(h(\rho^{(i)})(\sigma^{(i)}(e))),x]\| < \frac\e{(m+1)}.
\end{equation}
Thus we find, for $x \in \mathcal{F}$.
\begin{align*}
\|[v,x]\| &\leq \sum_{i=0}^m \|v_ig(h(\rho^{(i)})(\sigma^{(i)}(e))),x]\| \\
&<^\eqref{vCommuteDN-CommutePiece} (m+1)\frac\e{(m+1)},
\end{align*}
as required.
\end{proof}

\begin{thm}
\alabel{DefectEq}
Let $A$ be a separable, algebraically simple, exact $C^*$-algebra with strong tracial $\tilde m$-comparison and let $B \subseteq A$ be a separable subalgebra with nuclear dimension at most $m$.
Suppose that $\Phi:M_n \tens M_{m+1} \tens M_2\to A_\infty \cap B'$ is a c.p.c.\ order zero map which satisfies
\[ \tau\left(\Phi\left(1_{M_n \tens M_{m+1} \tens M_2}\right)\right) = 1 \]
for all $\tau \in T^1_\infty(A)$.
Then there exists a positive contraction $e \in (A_\infty)_+ \cap \Phi\left(M_n\right)'$ and $v \in A_\infty \cap B'$ such that $e$ acts as a unit on $B$,
\begin{equation}
\alabel{DefectEq-MainEq}
 v^*v = e-e\Phi\left(1_{M_n \tens M_{m+1} \tens M_2}\right) \quad \text{and} \quad v = \Phi\left(e_{11} \tens 1_{m+1} \tens 1_2\right)v.
\end{equation}

\end{thm}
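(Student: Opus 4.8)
The plan is to reduce, via the diagonal sequence argument of Section \ref{DiagArgSect}, to the following local statement: for each finite $\mathcal{F}\subset B$ (without loss of generality positive contractions) and each $\e>0$ there are a positive contraction $e\in B$ and a contraction $v\in A_\infty$ with $e$ an approximate unit on $\mathcal{F}$, $v$ approximately commuting with $\mathcal{F}$, $v=\Phi(e_{11}\tens 1_{m+1}\tens 1_2)v$, and $v^*v=e-e\Phi(\mathbf 1)$, where I abbreviate $\Phi(\mathbf 1):=\Phi(1_{M_n\tens M_{m+1}\tens M_2})$. Each of these is an admissible condition; for the equality $v^*v=e-e\Phi(\mathbf 1)$ one uses the multivariable version on the pair $(e,v)$, and the conditions that $e$ act as a unit on $B$ and that $v\in B'$ are upgraded to exact ones by the diagonal argument. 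The requirement $e\in\Phi(M_n)'$ will be automatic, since the $e$ I build lies in $B$ while $\Phi$ takes values in $B'$. Finally I assume $T^1(A)\neq\emptyset$ (the supporting lemmas require this); the case $T(A)=\emptyset$ can be handled separately as the trace hypotheses are then vacuous, cf.\ Remark \ref{DivNonzeroRmk}.

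For the local construction I would first use $\dn B\le m$: feeding the data $(\mathcal{F},g,h,\e)$ into Proposition \ref{vCommuteDN} (for functions $g,h$ chosen below) yields $\mathcal{G}\subset B$ and $\dl>0$, and I then pick a positive contraction $e_0\in B$ acting as an approximate unit on $\mathcal{F}$ to within $\dl$ together with an $(m+1)$-colourable c.p.\ approximation $(F=F^{(0)}\dsum\cdots\dsum F^{(m)},\sigma,\rho)$ of $\mathcal{G}\cup\{e_0\}$ to within $\dl$, with $\rho(F)\subseteq B$. The $M_{m+1}$ factor supplies one orthogonal matrix slice per colour: for colour $i$ let $u_i\in M_n\tens M_{m+1}$ be a unitary carrying a fixed minimal projection to $e_{11}^{(n)}\tens e_{ii}^{(m+1)}$, and apply Lemma \ref{DefectEqCpc} with $\psi=\rho^{(i)}$ and the order zero map $\Phi_i:=\Phi\circ(\mathrm{Ad}\,u_i\tens\id_{M_2}):M_{n(m+1)}\tens M_2\to A_\infty\cap\rho^{(i)}(F^{(i)})'$, which has full trace since $\Phi_i(1)=\Phi(\mathbf 1)$ (this is exactly where the hypothesis $\tau(\Phi(\mathbf 1))=1$ enters: it makes $1-\Phi(\mathbf 1)$ trace-null, so the comparison inside Lemma \ref{DefectEqCpc} goes through against a target slice of positive trace). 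This produces $v_i\in A_\infty\cap\rho^{(i)}(F^{(i)})'$ with $v_i^*v_i=\rho^{(i)}(1_{F^{(i)}})(1-\Phi(\mathbf 1))$ and $v_i=\Phi(e_{11}^{(n)}\tens e_{ii}^{(m+1)}\tens 1_2)v_i$.

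Next I would assemble $v:=\sum_{i=0}^m v_i\,c_i$ with $c_i:=g(h(\rho^{(i)}\sigma^{(i)}(e_0)))\in B$, precisely the formula of Proposition \ref{vCommuteDN}. Since each $v_i$ commutes with $\rho^{(i)}(F^{(i)})$, condition \eqref{vCommuteDN-LocalCommute} holds with $\dl=0$, so Proposition \ref{vCommuteDN} gives $\|[v,a]\|<\e$ for $a\in\mathcal{F}$. Because the projections $e_{11}^{(n)}\tens e_{ii}^{(m+1)}\tens 1_2$ are pairwise orthogonal and $\Phi$ is order zero, one has $v_i^*v_j=0$ for $i\neq j$; as $c_i$ commutes with both $\Phi(\mathbf 1)$ and $\rho^{(i)}(1_{F^{(i)}})$, this yields $v^*v=\sum_i c_i^2\rho^{(i)}(1_{F^{(i)}})(1-\Phi(\mathbf 1))$, and, using $\Phi(e_{11}\tens 1_{m+1}\tens 1_2)=\sum_i\Phi(e_{11}\tens e_{ii}\tens 1_2)$ with the same orthogonality, $v=\Phi(e_{11}^{(n)}\tens 1_{m+1}\tens 1_2)v$. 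The key move is then to \emph{define} $e:=\sum_i c_i^2\rho^{(i)}(1_{F^{(i)}})\in B$; since $e$ commutes with $\Phi(\mathbf 1)$ this makes $v^*v=e(1-\Phi(\mathbf 1))=e-e\Phi(\mathbf 1)$ hold identically, and the whole problem collapses to choosing $g,h$ so that this particular $e$ is a positive contraction acting as an approximate unit on $\mathcal{F}$.

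The hard part will be exactly this last point, since the naive sum $\sum_i\rho^{(i)}(1_{F^{(i)}})$ has norm up to $m+1$, so the cutoffs $c_i$ must simultaneously tame the norm of $e$ to (near) contractivity and retain enough mass that $e$ still dominates $e_0$ on the supports of $\mathcal{F}$. Taking $g\circ h$ increasing with $g(h(y))^2\le y$ forces $c_i^2\rho^{(i)}(1_{F^{(i)}})\le\rho^{(i)}\sigma^{(i)}(e_0)$ colour by colour, whence $e\le\rho\sigma(e_0)\approx e_0$ is nearly contractive; arranging in addition $g(h(y))^2\approx y$ for $y$ near $1$ and enlarging $\mathcal{G}$ so that $\rho^{(i)}\sigma^{(i)}$ behaves like a homomorphism on $\mathcal{F}$ (concentrating the relevant spectral mass of $\rho^{(i)}\sigma^{(i)}(e_0)$ near the top) recovers $ea\approx a$ for $a\in\mathcal{F}$. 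Balancing these two demands against $\e$ and pushing the resulting approximate relations through the diagonal argument to exact ones is the technical heart; once it is in place, the pair $(e,v)$ delivered by the diagonal argument satisfies \eqref{DefectEq-MainEq} together with the stated commutation and unit conditions.
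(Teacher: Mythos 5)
Your architecture coincides with the paper's: reduce via the diagonal sequence argument to an approximate local statement; obtain $\mathcal{G}$ and $\dl$ from Proposition \ref{vCommuteDN}; apply Lemma \ref{DefectEqCpc} once per colour, using the orthogonal slices $e_{11}\tens e_{ii}\tens 1_2$ supplied by the $M_{m+1}$ tensor factor; assemble $v=\sum_i v_ic_i$; and use $v_i^*v_j=0$ (from $v_i=\Phi(e_{11}\tens e_{ii}\tens 1_2)v_i$ and order zero) to compute $v^*v=\sum_i c_i^2\rho^{(i)}(1_{F^{(i)}})(1-\Phi(\mathbf{1}))$. All of that is correct and is exactly what the paper does. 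The gap is at the step you yourself single out as the technical heart, and the mechanism you sketch for it would fail.

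You take $c_i$ to be ordinary functional calculus of the single element $\rho^{(i)}\sigma^{(i)}(e_0)$, impose $(g\circ h)^2(y)\le y$ to get $e\le\rho\sigma(e_0)$, and then hope to recover $ea\approx a$ by asking $(g\circ h)^2(y)\approx y$ near $y=1$ and ``concentrating the spectral mass of $\rho^{(i)}\sigma^{(i)}(e_0)$ near the top.'' But the individual colour summands have no spectrum near $1$ in general: only the sum $\sum_i\rho^{(i)}\sigma^{(i)}(e_0)=\rho\sigma(e_0)$ is close to $e_0$. With two colours each contributing roughly $\tfrac12 e_0$, the constraint $(g\circ h)^2(y)\le y$ forces $c_i^2\le\tfrac12$ on the support of $e_0$, and since also $c_i^2\rho^{(i)}(1_{F^{(i)}})\le c_i^2$ one gets $e\lesssim\tfrac12(\text{something})+\tfrac12(\text{something})$ with each term further depressed by the factor $\rho^{(i)}(1_{F^{(i)}})$, which is not close to a unit on anything. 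More precisely, in the model $C_0((0,1],F^{(i)})$ of Proposition \ref{OrderZeroStructure} your requirement reads $f(t\lambda)\,t\approx t\lambda$ for all $t\in(0,1]$ and all $\lambda$ in the spectrum of $\sigma^{(i)}(e_0)$, i.e.\ $f(s)\approx\lambda$ for every factorization $s=t\lambda$; no single function $f=(g\circ h)^2$ can do this (take $s=\tfrac12$ with $(t,\lambda)=(1,\tfrac12)$ and $(\tfrac12,1)$). So there is no choice of $g,h$ for which your $e$, built by ordinary functional calculus, acts as an approximate unit on $\mathcal{F}$.

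The paper's resolution is to apply the cut-off to the cone variable of the order zero map rather than to the element: with $h=g_{0,\eta}$ and $g(t)=t^{1/2}$, the element $c_i$ is $\hat\rho^{(i)}\bigl(g_{0,\eta}^{1/2}\tens\sigma^{(i)}(e_0)^{1/2}\bigr)$, so that $c_i\rho^{(i)}(1_{F^{(i)}})^{1/2}=\hat\rho^{(i)}\bigl(g_{0,\eta}^{1/2}\id_{(0,1]}^{1/2}\tens\sigma^{(i)}(e_0)^{1/2}\bigr)$ is within $\eta^{1/2}$ of $\rho^{(i)}\sigma^{(i)}(e_0)^{1/2}$ \emph{uniformly}, no matter how small $\sigma^{(i)}(e_0)$ is --- this is \eqref{DefectEq-CpMagic}, resting on the identity $g_{0,\eta}(t)\,t\approx_\eta t$. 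Summing over colours gives $\sum_i c_i^2\rho^{(i)}(1_{F^{(i)}})\approx\rho\sigma(e_0)\approx e_0$, which delivers near-contractivity and the approximate-unit property simultaneously; it is also the reading of $g(h(\rho^{(i)}\sigma^{(i)}(e_0)))$ under which the proof of Proposition \ref{vCommuteDN} (via $\pi(g\circ h\tens g)$) actually applies to your $v$. With this correction your argument closes and becomes the paper's proof; the remaining items (the left/right placement of $\Phi(e_{11}\tens 1_{m+1}\tens 1_2)$, the normalization of $e$ to an exact contraction, and $e\in\Phi(M_n)'$ because $e\in B$ while $\Phi$ maps into $B'$) are all fine as you describe them.
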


\begin{proof}
Appealing to the diagonal sequence argument in Section \ref{DiagArgSect}, we will show that we can obtain $e \in A_\infty \cap \Phi(M_n)'$ which acts approximately as a unit on $B$ and $v \in A_\infty$ which approximately commutes with $B$, and such that \eqref{DefectEq-MainEq} holds approximately.
Therefore, let us fix a finite subset $\mathcal{F} \subset B$ and a tolerance $\e > 0$.
Set $\eta := (\e/4(m+1))^2$.
Using $h=g_{0,\eta}$ and $g(t) = t^{1/2}$ with Proposition \ref{vCommuteDN}, we obtain $\mathcal{G}$ and $\dl$; we may assume that $\dl < \e/2$.

Let $e \in B_+$ be a positive contraction such that $ex \approx_\dl x$ for all $x \in \mathcal{F}$.
Let $(F = F^{(0)} \dsum \cdots \dsum F^{(m)}, \sigma, \rho)$ be a $(m+1)$-decomposable c.p.\ approximation (for $B$) of $\mathcal{G} \cup \{e\}$ to within $\dl$.
By Lemma \ref{DefectEqCpc}, let $v_i \in A_\infty \cap \rho^{(i)}(F^{(i)})$ be such that
\begin{enumerate}
\item $v_i^*v_i = \rho^{(i)}(1_{F_i})\left(1-\Phi\left(1_{M_n \tens M_{m+1} \tens M_2}\right)\right)$; and
\item $v_i = \Phi\left(e_{11} \tens e_{ii} \tens 1_2\right)v_i$.
\end{enumerate}

Denote by $\hat\rho^{(i)}:C_0((0,1]) \tens F^{(i)} \to A$ the $*$-homomorphism associated to $\rho^{(i)}$ as in Proposition \ref{OrderZeroStructure}.
Then
\begin{align}
\notag
g_{0,\eta}(\rho^{(i)})(\sigma^{(i)}(e))^{1/2}\rho^{(i)}(1_{F^{(i)}})^{1/2} &=
\hat\rho^{(i)}(g_{0,\eta}^{1/2}\id_{(0,1]}^{1/2} \otimes \sigma^{(i)}(e)^{1/2}) \\
\notag
&\approx_{\e/4(m+1)} \hat\rho^{(i)}(\id_{(0,1]}^{1/2} \otimes \sigma^{(i)}(e)^{1/2}) \\
\alabel{DefectEq-CpMagic}
&= \rho^{(i)}\sigma^{(i)}(e)^{1/2}.
\end{align}

Now, define $v$ as in \eqref{vCommuteDN-vDef}, where $h=g_{0,\eta}$ and $g(t)=t^{1/2}$.
That $\|[v,a]\| \leq \e$ for $a \in \mathcal{F}$ is ensured by Proposition \ref{vCommuteDN}.
It is also clear from Lemma \ref{DefectEqCpc} \ref{DefectEqCpc-id} that
\begin{equation}
\alabel{DefectEq-UnitOnv}
v\Phi\left(e_{11} \tens 1_{m+1} \tens 1_2\right) = v
\end{equation}
holds exactly.
Finally,
\begin{align*}
v^*v &=^{\eqref{vCommuteDN-vDef}} \sum_{i,j} g_{0,\eta}(\rho^{(i)})(\sigma^{(i)}(e))^{1/2}v_i^*v_jg_{0,\eta}(\rho^{(j)})(\sigma^{(j)}(e))^{1/2} \\
&=^{\text{Lemma \ref{DefectEqCpc} \ref{DefectEqCpc-id}}} \sum_{i,j} g_{0,\eta}(\rho^{(i)})(\sigma^{(i)}(e))^{1/2}v_i^*\Phi\left(e_{11} \tens e_{ii} \tens 1_2\right)\\
&\qquad\qquad \Phi\left(e_{11} \tens e_{jj} \tens 1_2\right)v_jg_{0,\eta}(\rho^{(j)})(\sigma^{(j)}(e))^{1/2} \\
&=^{\text{Lemma \ref{DefectEqCpc} \ref{DefectEqCpc-id}}} \sum_{i} g_{0,\eta}(\rho^{(i)})(\sigma^{(i)}(e))^{1/2}v_i^*v_ig_{0,\eta}(\rho^{(i)})(\sigma^{(i)}(e))^{1/2} \\
&=^{\text{Lemma \ref{DefectEqCpc} \ref{DefectEqCpc-mvn}}} \sum_{i} g_{0,\eta}(\rho^{(i)})(\sigma^{(i)}(e))^{1/2}\rho^{(i)}(1_{F_i})^{1/2}\left(1-\Phi\left(1_{M_n \tens M_{m+1} \tens M_2}\right)\right)\\
&\qquad\qquad \rho^{(i)}(1_{F_i})^{1/2} g_{0,\eta}(\rho^{(i)})(\sigma^{(i)}(e))^{1/2} \\
&\approx_{\e/2}^\eqref{DefectEq-CpMagic} \sum_{i} (\rho^{(i)}\sigma^{(i)}(e))^{1/2}\left(1-\Phi\left(1_{M_n \tens M_{m+1} \tens M_2}\right)\right)(\rho^{(i)}\sigma^{(i)}(e))^{1/2} \\
&= \rho\sigma(e)\left(1-\Phi\left(1_{M_n \tens M_{m+1} \tens M_2}\right)\right) \\
&\approx_{\dl} e\left(1-\Phi\left(1_{M_n \tens M_{m+1} \tens M_2}\right)\right),
\end{align*}
as required.
\end{proof}

\begin{thm}
\alabel{MainThm}
Let $A$ be a separable, algebraically simple, exact $C^*$-algebra.
Suppose that one of the following hold.
\begin{enumerate}
\item \alabel{MainThm-dn} $A$ has finite nuclear dimension;
\item \alabel{MainThm-tcd} $A$ has locally finite nuclear dimension, strong tracial $m$-comparison and tracial $m$-almost divisibility for some $m$;
\item \alabel{MainThm-ctd} $A$ has locally finite nuclear dimension, $m$-comparison and tracial $m$-almost divisibility for some $m$;
\item \alabel{MainThm-cd} $A$ has locally finite nuclear dimension, $m$-comparison and $m$-almost divisibility for some $m$.
\end{enumerate}
Then $A$ is $\jsZ$-stable.
\end{thm}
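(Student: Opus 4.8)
The plan is to collapse the four hypotheses onto \ref{MainThm-tcd} and then manufacture $\jsZ$-stability from the characterization in Proposition \ref{LimitDStabilityCharacterization}. For the collapse: \ref{MainThm-cd} implies \ref{MainThm-ctd} by Proposition \ref{CompDivRelationships} \ref{CDR-DivTrDiv}; \ref{MainThm-ctd} implies \ref{MainThm-tcd} by Proposition \ref{CompDivRelationships} \ref{CDR-CompTrDivTrComp}, which promotes $m$-comparison together with tracial $m$-almost divisibility to strong tracial $\tilde m$-comparison; and \ref{MainThm-dn} implies \ref{MainThm-ctd}, since finite nuclear dimension is \emph{a fortiori} locally finite, yields $m$-comparison by \cite{Robert:dimNucComp}, and yields tracial $\tilde m$-almost divisibility by Theorem \ref{DimNucTrAD} (here one uses that $A$ is nonelementary; the elementary case $A \iso M_d$ is not $\jsZ$-stable and must be excluded). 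Thus it suffices to treat \ref{MainThm-tcd}, which is exactly the package of hypotheses feeding Theorems \ref{OneMap} and \ref{DefectEq}.

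Assume \ref{MainThm-tcd}. Writing $\jsZ = \overline{\bigcup_i \jsZ_{p_i-1,p_i}}$ with nuclear, unital, semiprojective building blocks (Theorem \ref{JiangSuCharacterization} and \cite{EilersLoringPedersen:NCCW}), Proposition \ref{LimitDStabilityCharacterization} reduces $\jsZ$-stability to exhibiting, for each $p$, a unital $*$-homomorphism $\jsZ_{p-1,p} \to (A_\infty \cap A')/A^\perp$, the target being unital by Proposition \ref{AnnQuotientUnital}. By the universal presentation in Proposition \ref{DimDropPresentation}, such a homomorphism amounts to a c.p.c.\ order zero map $\Psi \colon M_p \to (A_\infty \cap A')/A^\perp$ and an element $\bar v$ in the same algebra with $\bar v^* \bar v = 1 - \Psi(1_p)$ and $\bar v = \Psi(e_{11})\bar v$; such data even force the homomorphism to be unital, since $1 \mapsto \bar v^*\bar v + \Psi(1_p) = 1$.

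To produce these generators I would chain Theorems \ref{OneMap} and \ref{DefectEq}. Fix a finite set $\mathcal{F} \subset A$ and $\e > 0$; using local finiteness of the nuclear dimension and separability, choose a separable $B \subseteq A$ with $\dn B \leq m$ that contains $\mathcal{F}$ to within $\e$ and contains a positive contraction approximating a unit for $A$ (so that $\tau(\Phi(1)) = 1$ below). Theorem \ref{OneMap}, applied to $\iota(B)$ with $M_k$ identified with $M_p \tens M_{m+1} \tens M_2$ (so $k = 2p(m+1)$), gives a c.p.c.\ order zero map $\Phi \colon M_p \tens M_{m+1} \tens M_2 \to A_\infty \cap B'$ with $\tau(\Phi(1)) = 1$ for all $\tau \in T^1_\infty(A)$. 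Feeding this $\Phi$ into Theorem \ref{DefectEq} yields a positive contraction $e$ acting as a unit on $B$ and $v \in A_\infty \cap B'$ with $v^*v = e - e\Phi(1)$ and $v = \Phi(e_{11} \tens 1_{m+1} \tens 1_2)v$. Restricting $\Phi$ to its first tensor leg, the map $\Psi_0(\cdot) := \Phi(\,\cdot \tens 1_{m+1} \tens 1_2)$ is again c.p.c.\ order zero on $M_p$ with $\Psi_0(1_p) = \Phi(1)$ and $\Psi_0(e_{11}) = \Phi(e_{11}\tens 1_{m+1}\tens 1_2)$, so the relations read $v^*v = e - e\Psi_0(1_p)$ and $v = \Psi_0(e_{11})v$ --- precisely the presentation of Proposition \ref{DimDropPresentation} with the local unit $e$ standing in for $1$.

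Finally I would run the diagonal sequence argument of Section \ref{DiagArgSect} over an exhausting sequence $(\mathcal{F}_j, \e_j)$ with $\bigcup_j \mathcal{F}_j$ dense and $\e_j \to 0$, assembling the stagewise data $(\Psi_0^{(j)}, v^{(j)}, e^{(j)})$ into a single c.p.c.\ order zero $\Psi_0 \colon M_p \to A_\infty \cap A'$ and $v \in A_\infty \cap A'$, with $e$ acting as a unit on \emph{all} of $A$ so that its class is the unit of $(A_\infty \cap A')/A^\perp$. Passing to that quotient converts the relations into $\bar v^* \bar v = 1 - \bar\Psi_0(1_p)$ and $\bar v = \bar\Psi_0(e_{11})\bar v$, giving the required homomorphism and hence $\jsZ$-stability. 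The crux --- and main obstacle --- is this assembly: the demand that $\Psi_0$ and $v$ commute with ever larger finite pieces of $A$ pushes the diagonal index upward, while preservation of the order zero and defect relations (each holding exactly only at a fixed stage, where $B$ has nuclear dimension at most $m$) constrains it, and the two must be balanced simultaneously. A separate argument is needed when $T^1(A) = \emptyset$, since Theorem \ref{DefectEq} presupposes $T^1(A) \neq \emptyset$; there tracial fullness of $\Phi(1)$ is replaced by strong nonzero-ness and the defect element arises from Glimm halving rather than comparison (cf.\ Remark \ref{DivNonzeroRmk}).
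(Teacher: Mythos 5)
Your tracial-case argument is essentially the paper's proof: the same reduction of \ref{MainThm-dn}, \ref{MainThm-ctd}, \ref{MainThm-cd} to \ref{MainThm-tcd} via Proposition \ref{CompDivRelationships}, Theorem \ref{DimNucTrAD} and \cite{Robert:dimNucComp}; the same use of Propositions \ref{LimitDStabilityCharacterization} and \ref{DimDropPresentation} to reduce $\jsZ$-stability to approximately central dimension-drop generators; and the same chaining of Theorems \ref{OneMap} and \ref{DefectEq} followed by a diagonal sequence argument. (Your side remark that $A \iso M_d$ must be excluded in case \ref{MainThm-dn} is well taken, since Theorem \ref{DimNucTrAD} requires $A$ nonelementary.)

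The genuine gap is your treatment of the case $T^1(A) = \emptyset$. You propose to rerun the same machine with tracial fullness of $\Phi(1)$ replaced by strong nonzero-ness and with the defect element produced by Glimm halving. This is precisely the adaptation that does not work: the pairwise orthogonal, trace-dominating elements of Lemma \ref{ManyOrthogTraceMaintainers} --- on which Theorems \ref{OneMap} and \ref{DefectEq} ultimately rest --- are only certified nonzero by evaluating against traces, so in the traceless case there is no control that the relevant cut-downs (and hence $\Phi(1)$ and the defect element $v$) are nonzero where needed; the paper explicitly remarks that it could not avoid a dichotomy for exactly this reason. The traceless case is instead dispatched by an entirely different and much shorter route: when $T^1(A) = \emptyset$ the hypothesis of strong tracial $m$-comparison is vacuous, so $[a] \leq [b]$ for all nonzero $a,b \in M_\infty(A)_+$; cutting $A \tens \K$ by a nonzero projection (which exists by \ccite{Theorem 1.2}{BlackadarCuntz:simple}) yields a purely infinite simple unital algebra, which is $\mathcal{O}_\infty$-stable by \ccite{Theorem 3.15}{KirchbergPhillips:ExactEmbedding}; $\mathcal{O}_\infty$-stability passes back to $A$ through stable isomorphism, and $\mathcal{O}_\infty$ is itself $\jsZ$-stable. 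Without this (or some other complete argument) the traceless half of the theorem is unproved.
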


\begin{proof}
By Theorem \ref{DimNucTrAD} and \cite{Robert:dimNucComp}, \ref{MainThm-dn} $\Rightarrow$ \ref{MainThm-ctd}.
By Proposition \ref{CompDivRelationships}, each of \ref{MainThm-ctd} and \ref{MainThm-cd} imply \ref{MainThm-tcd}.
Therefore, we shall show $\jsZ$-stability using \ref{MainThm-tcd}.

This must be proven by considering separately two cases (the first case is well-known but warrants restating for completeness).

In the case that $T^1(A) = \emptyset$, by \ccite{Theorem 1.2}{BlackadarCuntz:simple}, $A \tens \K$ contains a nonzero projection $p$.
Set $B:=p(A \tens \K)p$.
By strong tracial $m$-comparison, and since $T^1(A) = \emptyset$, we have $[a] \leq [b]$ for any nonzero $a,b \in M_\infty(B)_+$.
In particular, $B$ is purely infinite (see \cite{Cuntz:Ktheory}), and therefore by \ccite{Theorem 3.15}{KirchbergPhillips:ExactEmbedding}, $B$ is $\mathcal{O}_\infty$-stable.
But $\mathcal{O}_\infty$-stability is preserved within stable-isomorphism classes, and therefore $A$ is also $\mathcal{O}_\infty$-stable.
Finally, since $\mathcal{O}_\infty$ is $\jsZ$-stable, so is $A$.

Now we turn to the case where $T^1(A) \neq \emptyset$.
In light of Proposition \ref{LimitDStabilityCharacterization}, it suffices to find, for any $n$, a unital $*$-homomorphism
\[ \jsZ_{n-1,n} \to (A_\infty \cap A')/A^\perp. \]
By Proposition \ref{DimDropPresentation}, this is the same as finding a c.p.c.\ order zero map
\[ \Phi:M_n \to (A_\infty \cap A')/A^\perp \]
together with an element $v \in (A_\infty \cap A')/A^\perp$ such that
\[ v^*v = 1-\Phi\left(1_n\right) \quad \text{and} \quad v = \Phi\left(e_{11}\right)v. \]
By the diagonal sequence argument (see Section \ref{DiagArgSect}), it suffices to do this approximately, finding for a given finite subset $\mathcal{F} \subset A$ and a tolerance $\e > 0$ a c.p.c.\ order zero map $\Phi:M_n \to A_\infty$ and $v \in A_\infty$ such that:
\begin{enumerate}
\item \alabel{MainProof-comm1} $\|[\Phi(x),a]\| \leq \e\|x\|$ for all $x \in M_n$ and all $a \in \mathcal{F}$;
\item \alabel{MainProof-comm2} $\|[v,a]\| \leq \e$ for all $v \in M_n$ all $a \in \mathcal{F}$;
\item \alabel{MainProof-mv} $\left\|\left(v^*v-(1-\Phi(1))\right)a\right\| \leq \e$ for all $a \in \mathcal{F}$; and
\item \alabel{MainProof-id} $v = \Phi(e_{11})v$.
\end{enumerate}

Therefore, let us be given $\mathcal{F} \subset A$ finite and $\e > 0$.
Since $A$ has locally finite nuclear dimension, we may find $B \subseteq A$ such that $\dn B \leq m < \infty$ and for every $a \in \mathcal{F}$, there exists $a' \in B$ such that $a \approx_{\e/2} a'$.
By Theorem \ref{OneMap}, we may find a c.p.c.\ order zero map $\Phi:M_n \tens M_{m+1} \tens M_2 \to A_\infty \cap B'$ such that
\[ \tau\left(\Phi\left(1_{M_n \tens M_{m+1} \tens M_2}\right)\right) = 1 \]
for all $\tau \in T^1_\infty(A)$.
By Theorem \ref{DefectEq}, it follows that there exists $e \in (A_\infty)_+ \cap \overline\Phi(M_n)'$ and $v \in A_\infty \cap B'$ such that $e$ acts as a unit on $B$ and
\[ v^*v = e-e\overline\Phi\left(1_{M_n \tens M_{m+1} \tens M_2}\right) \quad \text{and} \quad v = \overline\Phi(e_{11} \tens 1 \tens 1)v. \]
Set $\Phi:= \overline\Phi\left(\,\cdot \tens 1_{m+1} \tens 1_2\right):M_n \to A_\infty$.
Let us verify that this $\Phi$ and $v$ approximately satisfies the relations (i.e.\ check \ref{MainProof-comm1}-\ref{MainProof-id}).
Briefly, the idea is that the relations are continuous, and that if $\mathcal{F}$ was contained exactly, rather than approximately, in $B$, then the the relations would hold exactly.

\textbf{\ref{MainProof-comm1} and \ref{MainProof-comm2} hold}: For a contraction $x \in M_n$ and any $a \in \mathcal{F}$, we have
\[ \Phi(x)a \approx_{\e/2} \Phi(x)a' = a'\Phi(x) \approx_{\e/2} a\Phi(x); \]
and likewise we can prove that $v$ approximately commutes with $\mathcal{F}$.

\textbf{\ref{MainProof-mv} holds}:
We have
\begin{align*}
\left\|\left(v^*v-\left(1-\Phi\left(1\right)\right)\right)a\right\| &\leq \left\|\left(v^*v-\left(1-\Phi\left(1\right)\right)\right)a'\right\| + 2\frac\e2 \\ 
&\leq \left\|\big(\left(1-\Phi\left(1\right)\right)e-\left(1-\Phi\left(1\right)\right)\big)a'\right\| + \e \\ 
&= \e,
\end{align*}
since $e$ acts as a unit on $B$.

Finally, that \ref{MainProof-id} holds is quite clear from the choice of $\Phi$.
\end{proof}

\begin{remark}
(i)
Regrettably, the author was unable to produce a proof of Theorem \ref{MainThm} at didn't involve a dichotomy between the tracial and traceless cases.
The approach that works in the tracial case fundamentally requires the construction of orthogonal elements in Lemma \ref{ManyOrthogTraceMaintainers}; in the traceless case, the construction is not guaranteed to produce nonzero elements, which proves problematic in further use of this lemma.

(ii) That separability is a necessary hypothesis in Theorem \ref{MainThm} is shown in \cite{NonZStable}.
\end{remark}

The authors owes thanks to Leonel Robert for discussions that prompted the inclusion of the finitely-many-ideal case in the following corollary.

\begin{cor}
\alabel{DimNucZstable}
Let $A$ be a separable $C^*$-algebra with finite nuclear dimension
 and finitely many ideals.
Suppose that no quotient of an ideal of $A$ is elementary.
Then $A$ is $\jsZ$-stable.
\end{cor}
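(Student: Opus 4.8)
The plan is to reduce everything to the algebraically simple case already settled by Theorem~\ref{MainThm}, handle the simple subquotients one at a time, and then reassemble $A$ along its (finite) ideal lattice using permanence of $\jsZ$-stability under extensions. First I would record the hereditary behaviour of the hypotheses: by \cite{WinterZacharias:NucDim}, nuclear dimension does not increase when passing to an ideal, a quotient, or a hereditary subalgebra, and finite nuclear dimension forces nuclearity and hence exactness. Thus every subquotient of $A$ that appears below is again separable, exact, and of nuclear dimension at most $m:=\dn A<\infty$.

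Since $A$ has only finitely many closed two-sided ideals, its ideal lattice is finite, so I would choose a maximal chain $0=I_0\subset I_1\subset\cdots\subset I_n=A$. Using the standard fact that a closed ideal of the ideal $I_{j+1}$ is automatically a closed ideal of $A$ contained in $I_{j+1}$, maximality of the chain forces each quotient $S_j:=I_{j+1}/I_j$ to have no nontrivial closed ideals, i.e.\ to be simple. Moreover $S_j$ is a quotient of the ideal $I_{j+1}$ of $A$, so the standing hypothesis ``no quotient of an ideal of $A$ is elementary'' guarantees that $S_j$ is nonelementary; and $S_j$ is separable with $\dn S_j\leq m$.

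Next I would treat each simple subquotient $S_j$. It need not be algebraically simple, so by Corollary~\ref{ExistsAlgSimple} I would pass to a nonzero hereditary subalgebra $B_j\subseteq S_j$ which is algebraically simple and stably isomorphic to $S_j$. Then $B_j$ is separable, algebraically simple, exact, of nuclear dimension at most $m$, and nonelementary (elementariness is a stable-isomorphism invariant and $S_j$ is nonelementary), so the hypotheses required to run Theorem~\ref{DimNucTrAD}, and hence Theorem~\ref{MainThm}, are in place. Theorem~\ref{MainThm}\ref{MainThm-dn} then yields that $B_j$ is $\jsZ$-stable, and since $\jsZ$-stability is preserved under stable isomorphism of separable $C^*$-algebras \cite{TomsWinter:ssa}, $S_j$ itself is $\jsZ$-stable.

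Finally I would climb the chain by induction: $I_1=S_0$ is $\jsZ$-stable, and if $I_j$ and $S_j=I_{j+1}/I_j$ are both $\jsZ$-stable then the short exact sequence $0\to I_j\to I_{j+1}\to S_j\to 0$ of separable $C^*$-algebras makes $I_{j+1}$ $\jsZ$-stable, so after $n$ steps $A=I_n$ is $\jsZ$-stable. The main obstacle here is exactly this last ingredient: the permanence of $\jsZ$-stability under extensions of separable $C^*$-algebras whose ideal and quotient are both $\jsZ$-stable. Unlike the rest of the argument, which is bookkeeping around Theorem~\ref{MainThm} together with the simple-to-algebraically-simple reduction and Morita invariance, this extension permanence is the one substantive fact that must be imported from outside, and getting its hypotheses (separability, and $\jsZ$-stability of both ends) to line up with the composition series is the crux of the proof.
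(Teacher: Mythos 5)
Your proposal is correct and follows essentially the same route as the paper: reduce to simple subquotients via a composition series of the finite ideal lattice, handle each simple piece by Corollary~\ref{ExistsAlgSimple}, Theorem~\ref{MainThm}, and stable-isomorphism invariance of $\jsZ$-stability, then reassemble using permanence of $\jsZ$-stability under extensions (the paper cites \ccite{Theorem 4.3}{TomsWinter:ssa} for exactly the external fact you flag as the crux). The only cosmetic difference is that the paper phrases the induction top-down, peeling off a simple quotient $0\to I\to A\to B\to 0$ with $I$ having fewer ideals, rather than climbing a maximal chain from the bottom.
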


\begin{proof}
In the simple case, this is a consequence of Theorem \ref{MainThm}, Corollary \ref{ExistsAlgSimple}, and the fact that $\jsZ$-stability and the value of nuclear dimension are constant under stable isomorphism classes.

The general case is by induction, since if $A$ has finitely many ideals then $A$ can be expressed as an extension
\[ 0 \to I \to A \to B \to 0, \]
where $B$ is simple and $I$ has fewer ideals than $A$.
$I$ and $B$ have finite nuclear dimension if $A$ does \ccite{Proposition 2.3 (iv) and Proposition 2.5}{WinterZacharias:NucDim}.
By induction, they are $\jsZ$-stable, and so by \ccite{Theorem 4.3}{TomsWinter:ssa}, $A$ is $\jsZ$-stable.
\end{proof}

\begin{cor}
\alabel{CuZstable}
Let $A$ be a separable, simple $C^*$-algebra with locally finite nuclear dimension.
The following are equivalent.
\begin{enumerate}
\item \alabel{CuZstable-alg} $A \iso A \tens \jsZ$;
\item \alabel{CuZstable-W} $W(A) \iso W(A \tens \jsZ)$;
\item \alabel{CuZstable-Cu} $\Cu(A) \iso \Cu(A \tens \jsZ)$.
\end{enumerate}
\end{cor}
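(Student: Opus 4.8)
The plan is to establish the two nontrivial implications $\ref{CuZstable-W} \Rightarrow \ref{CuZstable-alg}$ and $\ref{CuZstable-Cu} \Rightarrow \ref{CuZstable-alg}$; the implications $\ref{CuZstable-alg} \Rightarrow \ref{CuZstable-W}$ and $\ref{CuZstable-alg} \Rightarrow \ref{CuZstable-Cu}$ are immediate, since $W$ and $\Cu$ are isomorphism invariants, and together with the converses these give the full equivalence. The engine for the converses is Theorem \ref{MainThm} \ref{MainThm-cd}, so the goal is to feed it $0$-comparison and $0$-almost divisibility. Because the hypotheses of that theorem require algebraic simplicity, I would first invoke Corollary \ref{ExistsAlgSimple} to replace $A$ by a stably isomorphic, algebraically simple hereditary subalgebra $B$. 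Local finiteness of nuclear dimension and $\jsZ$-stability are invariants of the stable isomorphism class (the latter since $\jsZ$-stability is preserved under Morita equivalence for $\sigma$-unital algebras), and locally finite nuclear dimension forces nuclearity, hence exactness; thus it is enough to verify the hypotheses of Theorem \ref{MainThm} \ref{MainThm-cd} with $m=0$ for $B$, from which $B$, and therefore $A$, is $\jsZ$-stable.

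The key input is that $A \tens \jsZ$ always has almost unperforated (\cite{Rordam:UHFII}) and almost divisible Cuntz semigroup; these are properties of the abstract ordered semigroup $\Cu(A \tens \jsZ)$, with analogous statements for $W(A \tens \jsZ)$. Under \ref{CuZstable-Cu}, the isomorphism $\Cu(B) = \Cu(A) \iso \Cu(A \tens \jsZ)$ (using stable-isomorphism invariance of $\Cu$) transports both regularity properties to $\Cu(B)$. Almost unperforation of $\Cu(B)$ yields $0$-comparison for $B$ in the sense of Definition \ref{CompDivDefn}, since the canonical map $W(B) \to \Cu(B)$ is an order-embedding and a witnessing inequality $(k+1)[x]\leq k[y_0]$ may be resolved in $\Cu(B)$ and then pulled back; in particular $B$ has strict comparison and hence finite radius of comparison, so Proposition \ref{CompDivRelationships} \ref{CDR-CuDiv} converts almost divisibility of $\Cu(B)$ into $0$-almost divisibility of $W(B)$. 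With $0$-comparison and $0$-almost divisibility in hand, Theorem \ref{MainThm} \ref{MainThm-cd} applies.

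For $\ref{CuZstable-W} \Rightarrow \ref{CuZstable-alg}$ the same strategy works once the hypothesis is transferred from $W$ to $\Cu$. Here I would use that $\Cu(A)$ arises as a completion of $W(A)$ (\cite{CowardElliottIvanescu}): an isomorphism $W(A) \iso W(A \tens \jsZ)$ induces an isomorphism $\Cu(A) \iso \Cu(A \tens \jsZ)$, reducing \ref{CuZstable-W} to \ref{CuZstable-Cu}. Alternatively, one checks directly that for a simple algebra almost unperforation and almost divisibility of $W(A)$ are equivalent to the corresponding properties of $\Cu(A)$, comparison and divisibility being governed by the traces in the simple case; either way $\Cu(B)=\Cu(A)$ inherits the two regularity properties and the argument of the previous paragraph finishes the proof.

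I expect the main obstacle to be exactly this bookkeeping between $W$ and $\Cu$: the regularity hypotheses are most naturally read off the semigroup appearing in the hypothesis, whereas Theorem \ref{MainThm} is applied to the algebraically simple model $B$, whose relevant invariant is $\Cu(B)=\Cu(A)$ rather than $W(A)$ (which is not a stable-isomorphism invariant). A minor point worth recording is that elementary algebras cause no difficulty: almost divisibility fails for $\Cu(\K)$, so neither \ref{CuZstable-W} nor \ref{CuZstable-Cu} can hold when $A$ is elementary, and the implications hold vacuously in that case.
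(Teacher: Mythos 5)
Your treatment of \ref{CuZstable-Cu} $\Rightarrow$ \ref{CuZstable-alg} is sound and close in spirit to the paper: pass to an algebraically simple hereditary subalgebra $B$ via Corollary \ref{ExistsAlgSimple}, read $0$-comparison off the order embedding $W(B) \subseteq \Cu(B) \iso \Cu(A\tens\jsZ)$, use Proposition \ref{CompDivRelationships} \ref{CDR-CuDiv} to pull $0$-almost divisibility down to $W(B)$, and apply Theorem \ref{MainThm} \ref{MainThm-cd}. The gap is in your reduction of \ref{CuZstable-W} to \ref{CuZstable-Cu}. You assert that an isomorphism $W(A) \iso W(A\tens\jsZ)$ of ordered semigroups induces an isomorphism of the completions $\Cu(A) \iso \Cu(A\tens\jsZ)$. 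But the passage from $W$ to $\Cu$ is not functorial for bare ordered-semigroup isomorphisms: the completion is taken with respect to an auxiliary (compact-containment) relation on $W$ which is extra data, not determined by the order alone, and an abstract isomorphism need not preserve it. Your fallback --- transferring almost divisibility ``directly'' from $W(A)$ up to $\Cu(A)$ --- is exactly the hard direction of the bookkeeping you flag as the main obstacle: $\Cu(A)$ contains classes with no representative in $M_\infty(A)$, and dividing those requires an argument you do not supply.

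The paper sidesteps all of this by running the two nontrivial implications in the opposite order. It proves \ref{CuZstable-W} $\Rightarrow$ \ref{CuZstable-alg} entirely inside $W$: the proof of \ccite{Proposition 3.7}{Winter:pure} (which does not use unitality) shows $W(A\tens\jsZ)$ has $0$-comparison and is $0$-almost divisible; $0$-comparison restricts to the order-embedded subsemigroup $W(B)$; then \ccite{Theorem 4.4.1}{BRTTW} makes $W(B)$ hereditary in $\Cu(B)=\Cu(A) \supseteq W(A)$, hence hereditary in $W(A)$, so $0$-almost divisibility also passes down to $W(B)$. Only the easy restriction direction $W(A) \to W(B)$ is ever used. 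Then \ref{CuZstable-Cu} $\Rightarrow$ \ref{CuZstable-alg} is deduced from this by applying it to $A\tens\K$, since $\Cu(A) \iso W(A\tens\K)$ and $\jsZ$-stability passes to hereditary subalgebras. If you reorganize your argument this way, no transfer from $W$ up to $\Cu$ is needed and your proof closes; the remainder of your write-up (exactness from local nuclearity, stable-isomorphism invariance, the elementary case) is correct.
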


\begin{proof}
\ref{CuZstable-alg} $\Rightarrow$ \ref{CuZstable-W} and \ref{CuZstable-Cu} is obvious.

\ref{CuZstable-W} $\Rightarrow$ \ref{CuZstable-alg}:
The proof of \ccite{Proposition 3.7}{Winter:pure} does not at all use the fact that $A$ is unital, and thereby shows that $W(A \tens \jsZ)$ has $0$-comparison and $0$-almost divisible Cuntz semigroup (properties which, we may recall, do not require algebraic simplicity).

By Corollary \ref{ExistsAlgSimple}, let $B$ be a nonzero hereditary subalgebra of $A$ which is algebraically simple.
Then $W(B) \subset W(A) \subset \Cu(A) = \Cu(B)$.
Since $W(A)$ has $0$-comparison, so does $W(B)$.
It follows from \ccite{Theorem 4.4.1}{BRTTW} that $W(B)$ is hereditary in $\Cu(B)$, and therefore also in $W(A)$.
Thus, $0$-almost divisibility for $W(A)$ implies $0$-almost divisibility for $W(B)$.
Theorem \ref{MainThm} then shows that $A \iso A \tens \jsZ$.

\ref{CuZstable-Cu} $\Rightarrow$ \ref{CuZstable-alg}: $\Cu(A) \iso W(A \tens \K)$, so by \ref{CuZstable-W} $\Rightarrow$ \ref{CuZstable-alg}, we see that \ref{CuZstable-Cu} implies that $A \tens \K$ is $\jsZ$-stable.
Since $\jsZ$-stability passes to hereditary subalgebras, $A \iso A \tens \jsZ$.
\end{proof}

\section{Approximately subhomogeneous $C^*$-algebras}
\alabel{ASHSec}
An important consequence of Theorem \ref{MainThm} is a characterization of slow dimension growth for simple approximately subhomogeneous $C^*$-algebras.
Roughly, slow dimension growth indicates that the algebra has a system for which the ratio of the topological to matricial dimension vanishes in the limit.
We refer to \ccite{Definition 5.3}{ProjlessReg} for a precise definition (which is trickier to produce in the nonunital, as opposed to unital, case).
In the unital case, the characterization was shown in \ccite{Corollary 7.5}{Winter:pure} and \ccite{Corollary 1.3}{Toms:rigidity}.

\begin{lemma}
\alabel{ExistsAlgSimpleASH}
Let $A$ be a simple approximately subhomogeneous algebra.
Then there exists a nonzero hereditary subalgebra $B$ of $A$ such that $B$ is algebraically simple and approximately subhomogeneous.
In particular, $A$ is stably isomorphic to $B$.

Moreover, if $A$ has slow dimension growth as in \ccite{Definition 5.3}{ProjlessReg} then $B$ can be chosen to also have slow dimension growth.
\end{lemma}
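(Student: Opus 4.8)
The plan is to follow the proof of Corollary \ref{ExistsAlgSimple}, but to be careful to choose the positive element generating the hereditary subalgebra \emph{inside one of the subhomogeneous building blocks}, so that the inductive limit structure can simply be cut down by this element.

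First I would record the building blocks. Since $A$ is approximately subhomogeneous it is separable, hence $\sigma$-unital, and we may write $A = \overline{\bigcup_{n} A_n}$ with $(A_n)$ an increasing sequence of subhomogeneous subalgebras. Fix any nonzero $a \in (A_N)_+$ and any $0 < \e < \|a\|$, and set $b := (a-\e)_+ \in A_N$. It is standard that $b \in \Ped(A)$: the element $g_{\e/2,\e}(a) \in A_+$ acts as a unit on $b$, and elements admitting such a local unit lie in the Pedersen ideal. Put $B := \her{b}$. Since $b \in \Ped(A)$ is nonzero and $A$ is simple and $\sigma$-unital, Corollary \ref{ExistsAlgSimple} immediately gives that $B$ is algebraically simple and that $A$ is stably isomorphic to $B$.

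Next I would verify that $B$ is approximately subhomogeneous, exploiting the fact that $b$ lies in $A_N$. For each $n \geq N$ we have $b \in A_n$, so
\[ B_n := \overline{b A_n b} \]
is a hereditary subalgebra of the subhomogeneous algebra $A_n$; hereditary subalgebras of subhomogeneous algebras are again subhomogeneous (the irreducible representations of $B_n$ are restrictions of those of $A_n$ and so have no larger dimension). Clearly $B_N \subseteq B_{N+1} \subseteq \cdots \subseteq B$, and since $\bigcup_n A_n$ is dense in $A$ and $x \mapsto bxb$ is continuous, $\overline{\bigcup_{n \geq N} B_n} = \overline{bAb} = B$. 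Thus $B = \varinjlim B_n$ is approximately subhomogeneous.

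Finally, for the slow dimension growth statement, I would take $(A_n)$ to be a system witnessing slow dimension growth in the sense of \ccite{Definition 5.3}{ProjlessReg} and argue that the cut-down system $(B_n)$ inherits it. The point is that passing from $A_n$ to the hereditary subalgebra $B_n$ cannot increase the relevant dimension data: the primitive spectrum of $B_n$ is the locally closed subset of $\Prim(A_n)$ consisting of those representations not annihilated by $b$, over which the matricial sizes are bounded by those of $A_n$, and the covering dimension of a subset does not exceed that of the ambient spectrum. Hence each quantity controlling the growth ratios for $(B_n)$ is dominated by the corresponding quantity for $(A_n)$. The main obstacle I expect is precisely this last step: the nonunital formulation of slow dimension growth in \ccite{Definition 5.3}{ProjlessReg} is phrased in terms of the Pedersen ideal and the ranks of elements rather than simply in terms of spectra and multiplicities, so the real work is to check that these rank and Pedersen-ideal data for $(B_n)$ are genuinely controlled by those for $(A_n)$ — in particular that cutting by $b$ interacts correctly with the full elements used to normalize ranks in that definition. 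Everything else follows at once from the choice $b \in \Ped(A) \cap A_N$ together with Corollary \ref{ExistsAlgSimple}.
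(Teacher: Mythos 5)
Your proposal is correct and follows essentially the same route as the paper: the paper likewise chooses the generator $b$ inside one of the subhomogeneous building blocks (it takes $b \in \Ped(A_1)$, which plays the same role as your $(a-\e)_+ \in A_N$), invokes Corollary \ref{ExistsAlgSimple} for algebraic simplicity and stable isomorphism, writes $B = \overline{\bigcup_i \overline{bA_ib}}$ to see it is approximately subhomogeneous, and handles the slow dimension growth claim by cutting down a witnessing system and citing \ccite{Proposition 5.2}{ProjlessReg} for exactly the compatibility you flag as the remaining step.
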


\begin{proof}
We shall produce $A$ quite as in the proof of Corollary \ref{ExistsAlgSimple}, except that the element $b \in \Ped(A)$ is chosen with some care, to ensure that $B$ is approximately subhomogeneous.
Let $A$ be the closed union of an increasing sequence of subhomogeneous subalgebras,
\[ A_1 \subseteq A_2 \subseteq \cdots. \]
Now, take $b \in \Ped(A_1) \subseteq \Ped(A)$.
As in the proof of Corollary \ref{ExistsAlgSimple}, $A$ is stably isomorphic to $B:=\overline{bAb}$, which is algebraically simple.
Moreover,
\[ B = \overline{bAb} = \overline{\bigcup \overline{bA_ib}}, \]
and since $A_i$ is subhomogeneous, so is $\overline{bA_ib}$, for each $i$.

If $A$ has slow dimension growth then we may pick the inductive sequence
\[ A_1 \subseteq A_2 \subseteq \cdots, \]
to witness this, and then it follows (cf.\ \ccite{Proposition 5.2}{ProjlessReg}) that
\[ \overline{bA_1b} \subseteq \overline{bA_2b} \subseteq \cdots \]
witnesses slow dimension growth for $B$.
\end{proof}

\begin{cor}
Let $A$ be a simple approximately subhomogeneous algebra.
Then $A$ has slow dimension growth, as defined in \ccite{Definition 5.3}{ProjlessReg}, if and only if $A$ is $\jsZ$-stable.
\end{cor}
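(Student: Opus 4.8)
The plan is to derive the corollary by combining Theorem~\ref{MainThm} with the main result of \cite{TT:ranks}, which characterizes slow dimension growth for simple approximately subhomogeneous algebras in terms of the Cuntz semigroup: such an algebra has slow dimension growth precisely when its Cuntz semigroup is almost unperforated and almost divisible. These two properties are exactly $0$-comparison and $0$-almost divisibility in the sense of Definition~\ref{CompDivDefn}, a special case of the hypotheses of Theorem~\ref{MainThm}~\ref{MainThm-cd}, so the two cited results dovetail directly.

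For the forward implication, I would first suppose that $A$ has slow dimension growth and use Lemma~\ref{ExistsAlgSimpleASH} to pass to a nonzero hereditary subalgebra $B \subseteq A$ that is algebraically simple, approximately subhomogeneous, stably isomorphic to $A$, and still has slow dimension growth. Being approximately subhomogeneous, $B$ is locally approximated by subhomogeneous subalgebras, each of finite nuclear dimension, so $B$ has locally finite nuclear dimension (and is exact, being nuclear). Applying \cite{TT:ranks} to $B$ gives that $\Cu(B)$ is almost unperforated and almost divisible; thus $B$ has $0$-comparison, and via Proposition~\ref{CompDivRelationships}~\ref{CDR-CuDiv} (whose hypothesis is met since $0$-comparison yields a finite radius of comparison) also $0$-almost divisibility of $W(B)$. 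Theorem~\ref{MainThm}~\ref{MainThm-cd} then yields that $B$ is $\jsZ$-stable, and since $\jsZ$-stability is constant on stable-isomorphism classes (as recalled in the introduction and used in the proof of Corollary~\ref{DimNucZstable}), $A$ is $\jsZ$-stable as well.

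For the reverse implication, I would suppose that $A \iso A \tens \jsZ$ and invoke the observation from the proof of Corollary~\ref{CuZstable} that the (unitality-free) argument of \ccite{Proposition 3.7}{Winter:pure} shows $W(A \tens \jsZ)$, and hence $W(A)$, to be almost unperforated and almost divisible. The main result of \cite{TT:ranks}, applied directly to the simple approximately subhomogeneous algebra $A$ --- the pertinent results of that paper already being formulated in the possibly-nonunital setting, cf.\ Proposition~\ref{TChoquet} --- then returns slow dimension growth for $A$.

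The genuine mathematical content is entirely contained in the two cited theorems, so the only real work is bookkeeping: confirming that the reduction to the algebraically simple $B$ is legitimate, that $\jsZ$-stability transports across the stable isomorphism $A \sim B$, and that almost divisibility passes from $\Cu$ to $W$ through Proposition~\ref{CompDivRelationships}. I expect this last transfer between the $\Cu$- and $W$-level divisibility (together with the matching of ``almost unperforated / almost divisible'' with ``$0$-comparison / $0$-almost divisibility'') to be the most delicate point, though it is routine given the results already assembled in Proposition~\ref{CompDivRelationships}.
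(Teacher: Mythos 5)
Your forward implication is essentially the paper's argument: pass to an algebraically simple hereditary subalgebra $B$ via Lemma \ref{ExistsAlgSimpleASH}, extract $0$-comparison and $0$-almost divisibility of the Cuntz semigroup from slow dimension growth, transfer the divisibility from $\Cu(B)$ to $W(B)$ via Proposition \ref{CompDivRelationships} \ref{CDR-CuDiv}, and apply Theorem \ref{MainThm} \ref{MainThm-cd}. (A small attribution point: the paper gets $0$-comparison from \ccite{Corollary 5.9}{ProjlessReg} and only the $0$-almost divisibility from \ccite{Corollary 7.2}{TT:ranks}; you credit both to \cite{TT:ranks}. This does not affect the logic.)

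The reverse implication, however, has a genuine gap. You premise the whole argument on the claim that \cite{TT:ranks} characterizes slow dimension growth for simple approximately subhomogeneous algebras as \emph{equivalent} to the Cuntz semigroup being almost unperforated and almost divisible, and then deduce slow dimension growth from $\jsZ$-stability by first computing $W(A \tens \jsZ)$. But only one direction of that purported characterization is available: slow dimension growth \emph{implies} $0$-almost divisibility (and $0$-comparison). No converse of the form ``almost unperforated and almost divisible Cuntz semigroup implies slow dimension growth'' is established in \cite{TT:ranks} or anywhere in this paper, and indeed any such converse would essentially have to route back through $\jsZ$-stability and then through the very implication you are trying to prove, making the argument circular. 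The paper instead proves $\jsZ$-stable $\Rightarrow$ slow dimension growth directly and constructively: by \ccite{Proposition 3.6}{ProjlessReg}, $A$ is the limit of an injective, full system of recursive subhomogeneous algebras $A_i$ with finite-dimensional total spaces and compact spectra; $\jsZ$ is the limit of dimension drop algebras $\jsZ_{p_i-1,p_i}$ where, by passing to a tail, the $p_i$ can be taken to grow so fast that the ratio of the dimension of the total space of $A_i$ to $p_i$ tends to $0$; the tensor product system $A_i \tens \jsZ_{p_i-1,p_i}$ then has limit $A \tens \jsZ \iso A$ and witnesses slow dimension growth. You need some such explicit construction here; the Cuntz-semigroup detour does not close the argument.
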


\begin{proof}
($\Rightarrow$):
By Lemma \ref{ExistsAlgSimple}, we may assume without loss of generality that $A$ is algebraically simple.
By \ccite{Corollary 5.9}{ProjlessReg}, $\Cu(A)$ has $0$-comparison; since there is an order embedding $W(A) \subset \Cu(A)$, it follows that $W(A)$ also has $0$-comparison.
By \ccite{Corollary 7.2}{TT:ranks}, $\Cu(A)$ has $0$-almost divisibility; then by Proposition \ref{CompDivRelationships} \ref{CDR-CuDiv}, it follows that $A$ has $0$-almost divisibility.

Therefore, $B$ satisfies hypothesis \ref{MainThm-cd} of Theorem \ref{MainThm}, whence $B$ is $\jsZ$-stable.

($\Leftarrow$): 
Suppose $A \iso A \tens \jsZ$.
By \ccite{Proposition 3.6}{ProjlessReg}, there exists an inductive system
\[ A_1 \to A_2 \to \cdots \]
whose limit is $A$, such that $A_i$ is recursive subhomogeneous with finite dimensional total space and compact spectrum, and the connecting maps are injective and full.
On the other hand, $\jsZ$ is the inductive limit of a system
\[ \jsZ_{p_1-1,p_1} \to \jsZ_{p_2-1,p_2} \to \cdots, \]
for some $p_i \to \infty$.
In fact, by taking a tail, we may make $p_i$ as large as we want, so that the ratio of the dimension of the total space of $A_i$ to $p_i$ goes to $0$ as $i \to \infty$.
Then it follows that
\[ A_1 \tens \jsZ_{p_1-1,p_1} \to A_2 \tens \jsZ_{p_2-1,p_2} \to \cdots \]
is a system whose limit is $A \tens \jsZ$, and witnesses that this algebra has slow dimension growth.
\end{proof}

\end{document}